\documentclass[aop]{imsart}

\RequirePackage{amsthm,amsmath,amsfonts,amssymb}
\RequirePackage[numbers,sort&compress]{natbib}
\RequirePackage[colorlinks,citecolor=blue,urlcolor=blue,hypertexnames=false]{hyperref}
\RequirePackage{graphicx}

\usepackage{mathrsfs}
\usepackage{url}
\usepackage{enumitem}
\usepackage[usenames, dvipsnames]{color}
\usepackage{verbatim}
\usepackage{float}
\usepackage{tikz}
\usepackage{tikz-cd}
\usepackage{subcaption}
\usepackage{pgfplots}
\usepackage{adjustbox}
\usepackage{bbm}
\usepackage{xcolor}
\usepackage{pdfsync}
\usepackage{cleveref}
\usepackage{extpfeil}
\usepackage{ytableau}
\usepackage{thmtools}

\startlocaldefs

\theoremstyle{plain}
\newtheorem{prop}{Proposition}[section]
\newtheorem{proposition}[prop]{Proposition}
\newtheorem{theorem}[prop]{Theorem}
\newtheorem{corollary}[prop]{Corollary}

\newtheorem{claim}[prop]{Claim}
\newtheorem{lemma}[prop]{Lemma}
\newtheorem{fact}[prop]{Fact}
\newtheorem{example}[prop]{Example}

\theoremstyle{remark}
\newtheorem{defi}{Definition}[section]
\newtheorem{definition}[defi]{Definition}
\newtheorem{rmk}{Remark}[section]

\numberwithin{equation}{section}

\renewcommand\Im{{\operatorname{Im}}}

\newcommand\rank{{\operatorname{rank}}}

\newcommand\R{{\mathbb{R}}}
\newcommand\N{{\mathbb{N}}}

\renewcommand\P{{\mathbf{P}}}
\newcommand\E{{\mathbf{E}}}

\newcommand\diag{{\operatorname{diag}}}

\newcommand\Z{{\mathbb{Z}}}

\newcommand\F{{\mathbb{F}}}


\newcommand\al{\alpha}
\newcommand\la{\lambda}

\newcommand\Bv{{\mathbf v}}

\renewcommand\Pr{{\mathbf P }}


\newcommand\CC{{\mathcal C}}
\newcommand\CE{{\mathcal E}}

\newcommand\CM{{\mathcal M}}


\newcommand\BBZ {{\mathbb Z}}


\newcommand\tl{{\tilde \lambda}}

\newcommand{\tX}{\tilde{X}}
\newcommand{\tbeta}{\tilde{\beta}}


\newcommand\eps{\varepsilon}

\newcommand\lang{\langle}
\newcommand\rang{\rangle}

\newcommand\cok{\mathbf{Cok}}
\newcommand\Aut{\operatorname{Aut}}

\newcommand{\ra}{\rightarrow}

\newcommand\Hom{\operatorname{Hom}}
\newcommand\Sur{\operatorname{Sur}}




\newcommand\GL{\operatorname{GL}}

\newcommand{\hH}{\hat{H}}


\newcommand\bq{\begin{equation}}
\newcommand\eq{\end{equation}}



\setcounter{tocdepth}{1} 

\newcommand\Inj{\operatorname{Inj}}
\newcommand\Sig{\operatorname{Sig}}

\newcommand{\tth}{^{th}}

\newcommand{\Mat}{\operatorname{Mat}}

\newcommand{\cL}{\mathcal{L}}
\newcommand{\tcL}{\tilde{\cL}}
\newcommand{\hcL}{\hat{\cL}}
\newcommand{\Y}{\mathbb{Y}}

\DeclareMathOperator{\bSig}{\overline{Sig}}
\newcommand{\bZ}{\overline{\mathbb{Z}}}

\DeclareMathOperator{\corank}{corank}

\DeclareSymbolFont{bbold}{U}{bbold}{m}{n}
\DeclareSymbolFontAlphabet{\mathbbold}{bbold}
\newcommand{\bbone}{\mathbbold{1}}

\newcommand{\meas}{M}
\newcommand{\tmeas}{\tilde{\meas}}

\DeclarePairedDelimiter{\abs}{\lvert}{\rvert} 
\DeclarePairedDelimiter{\floor}{\lfloor}{\rfloor}

\DeclarePairedDelimiter{\near}{\lfloor}{\rceil}











\endlocaldefs

\begin{document}

\begin{frontmatter}
 \title{Rank fluctuations of matrix products and a moment method for growing groups}
\runtitle{Rank fluctuations and a moment method for growing groups}

\begin{aug}
\author[A]{\fnms{Hoi H.}~\snm{Nguyen} \ead[label=e1]{nguyen.1261@osu.edu}},
\author[B]{\fnms{Roger}~\snm{Van Peski}\ead[label=e2]{rogervanpeski@gmail.com}}

\address[A]{Department of Mathematics,
Ohio State University \printead[presep={ ,\ }]{e1}}

\address[B]{Department of Mathematics,
Columbia University\printead[presep={,\ }]{e2}}
\end{aug}

\begin{abstract}
We consider the $p$-primary part of the cokernel $G_n = \cok(A_{k} \cdots A_2 A_1)[p^\infty]$ of a product of independent $n \times n$ random integer matrices with iid entries from generic nondegenerate distributions, in the regime where both $n$ and $k$ are sent to $\infty$ simultaneously. In this regime we show that the $G_n$ converges universally to the reflecting Poisson sea, an interacting particle system constructed in \cite{van2023reflecting}, at the level of $1$-point marginals. In particular, $\operatorname{corank}(A_{k} \cdots A_2 A_1 \pmod{p}) \sim \log_p k$, and its fluctuations are $O(1)$ and converge to a discrete random variable defined in \cite{van2023local}.

The main difference with previous works studying cokernels of random matrices is that $G_n$ does not converge to a random finite group; for instance, the $p$-rank of $G_n$ diverges. This means that the usual moment method for random groups does not apply. Instead, we proceed by proving a `rescaled moment method' theorem applicable to a general sequence of random groups of growing size. This result establishes that fluctuations of $p$-ranks and other statistics still converge to limit random variables, provided that certain rescaled moments $\E[\#\Hom(G_n,H)]/C(n,H)$ converge.
\end{abstract}

\begin{keyword}[class=MSC]
 \kwd[Primary ]{15B52}
 \kwd[; secondary ]{20K01}
 \end{keyword}

\begin{keyword}
\kwd{Cokernels}
\kwd{Moment method}
\kwd{Reflecting Poisson sea}
\end{keyword}

\end{frontmatter}

 


 \maketitle

\tableofcontents

\section{Introduction}

\subsection{Preface} This paper concerns three kinds of objects: random abelian groups, discrete random matrices, and continuous-time interacting particle systems. Our goal is to (1) develop a general-purpose `rescaled moment method' for computing limiting fluctuations of random groups of growing size, and (2) apply this machinery to establish universality of a connection between limits of random matrix cokernels and a new interacting particle system constructed in \cite{van2023reflecting}.

While this interacting particle system limit is new, relations between random abelian groups and discrete random matrices are simple and well-known: given a nonsingular matrix $A \in \Mat_n(\Z)$, its cokernel
\begin{equation*}
\cok(A) := \Z^n / A \Z^n
\end{equation*}
is a finite abelian group, so a random matrix yields a random group. By the structure theorem, such a group is isomorphic to $\bigoplus_{i=1}^n \Z/a_i \Z$ for some collection of positive integers $a_i$. Often such random groups converge to a universal random (finite, abelian) group as $n \to \infty$, regardless of the finer details of the matrix entry distribution. Such universal distributions arise in many contexts (either proved or unproven): 
\begin{itemize}
\item The Cohen-Lenstra heuristics \cite{cohen-lenstra} in arithmetic statistics, see Friedman-Washington \cite{friedman-washington}, Ellenberg-Venkatesh-Westerland \cite{ellenberg2011modeling,ellenberg2016homological}, Bhargava-Kane-Lenstra-Poonen-Rains \cite{bhargava2013modeling}, Maples \cite{maples}, Wood \cite{wood2018cohen,W1,wood2023probability}, Lipnowski-Sawin-Tsimmerman \cite{lipnowski2020cohen};
\vskip .1in
\item Jacobians or sandpile groups of random graphs, see Clancy-Kaplan-Leake-Payne-Wood \cite{clancy2015cohen}, Wood \cite{W0}, M\'esz\'aros \cite{meszaros2020distribution}, Nguyen-Wood \cite{nguyen2022local}; 
\vskip .1in 
\item (Co)homology groups of random simplicial complexes, see Kahle \cite{kahle2014topology}, Kahle-Lutz-Newman-Parsons \cite{kahle2020cohen}, M\'esz\'aros \cite{meszaros2023cohen,meszaros2024bounds,meszaros20242}.
\end{itemize}
There is relatively new---but by now standard---machinery to prove universal convergence of matrix cokernels to such distributions, the \emph{moment method} first developed for finite abelian groups by Wood \cite{W0}. 

However, there remain many interesting sequences of random groups $(G_n)_{n \to \infty}$ which do \emph{not} converge to a random finite group $G$, but for which one might still hope to say something about the asymptotics of quantities associated to $G_n$ such as its $p$-rank\footnote{Recall that this means the rank of $G_n/pG_n$ as an $\F_p$-vector space.}. Our motivating example is cokernels of products of random matrices; we mention a few others at the end of the Introduction.

In the related and simpler $p$-adic setting, recent works \cite{van2023local,van2023reflecting} considered the sequence of cokernels 
\begin{equation}\label{eq:cok_product_first}
\cok(A_k A_{k-1} \cdots A_1), k \in \Z_{\geq 0}
\end{equation}
of products of iid matrices $A_1,A_2,\ldots$. These works viewed \eqref{eq:cok_product_first} as defining a discrete-time stochastic process on the set of finite $p$-abelian groups $\bigoplus_{i=1}^n \Z/p^{a_i} \Z$, or equivalently on the numbers $\{a_i\}_{1 \leq i \leq n}$, with $k$ playing the role of time. This process cannot converge to a limiting random group as $k$ increases, since by multiplicativity of the determinant 
\begin{equation*}
|\cok(A_k A_{k-1} \cdots A_1)| = \prod_{i=1}^k |\cok(A_i)|.
\end{equation*}
Unexpectedly, the result of \cite{van2023reflecting} finds that the numbers $a_i$ evolve in a particularly simple manner when $i$ is large, according to an interacting particle system dubbed the \emph{reflecting Poisson sea} there. In particular as $n,k \to \infty$, they converged to the distribution of the reflecting Poisson sea at a fixed time (\cite[Theorem 10.1]{van2023local} and \cite[Theorem 8.2]{van2023reflecting}). Note this is not a limit of random finite groups, which cannot exist, but rather a certain limit of the numbers $a_i$ as a point process. This limit as both $n,k \to \infty$, given in \cite[Theorem 10.1 and 10.2]{van2023local}, has only been shown in special examples, using delicate techniques originating from harmonic analysis on $p$-adic groups and integrable probability which are specific to these cases\footnote{Oddly, universality could be shown for the dynamics by which the $a_i$ evolve in time in \cite[Theorem 1.4]{van2023reflecting}, but not for their distribution at a fixed time.}. 

Meanwhile, if the number of products $k$ is fixed independent of the matrix size $n$, then previous work by the authors \cite{nguyen2022universality} shows that for any nondegenerate distribution on the matrix entries, $\cok(A_k \cdots A_1)$ converges to a limiting universal random group as the matrix size $n$ is sent to $\infty$, as was established for a single matrix by Wood \cite{W1}. This work showed convergence to a random group using the moment method, but as soon as $k$ is sent to $\infty$, the limiting cokernel no longer exists as a random finite group and the standard moment method no longer applies. 

In our first main result (\Cref{thm:matrix_product_intro}) below, we nonetheless manage to prove universality of the limiting results of \cite[Theorem 10.1]{van2023local} in the setting of generic iid matrix entries, despite the fact that the limiting cokernels do not exist. To do so we prove a second main result (\Cref{thm:group_convergence_intro}), a `rescaled moment method' applicable to sequences of random abelian groups $(G_n)_{n \geq 1}$ which do not converge to a limit random group $G$. More specifically, for finite abelian $p$-groups $G_n$, our result determines the limits and joint distribution of fluctuations of the $p$-ranks $\rank(p^{i-1}G_n), i =1,2,\ldots$. These are natural statistics to study: not only does the sequence $\rank(p^{i-1}G_n), i =1,2,\ldots$ determine an abelian $p$-group $G_n$ up to isomorphism, but the truncated sequences $\rank(p^{i-1}G_n), i=1,\ldots,d$ determine the isomorphism type of the quotients $G_n/p^dG_n$, for any $d$. \Cref{thm:group_convergence_intro} is a general result for random abelian groups, and broadly applicable beyond the matrix product case.

\subsection{Universal discrete limits for matrix product cokernels} Fix a prime $p$, for a finite abelian group $G$ let $G[p^\infty]$ denote its $p$-Sylow subgroup, also called its $p^\infty$-torsion part. Consider the random abelian $p$-group
\begin{equation}
G_n = \cok(A_{k}^{(n)} \cdots A_{2}^{(n)} A_{1}^{(n)})[p^\infty]
\end{equation}
where $A_i^{(n)}$ are iid random matrices in $\Mat_n(\Z)$, and $k=k(n)$ is some sequence depending on $n$ which goes to $\infty$ along with $n$. 

Fix a positive integer $d$. The sequence $(\rank(p^{i-1}G_n))_{1 \leq i \leq d}$ mentioned before is a decreasing sequence of $d$ nonnegative integers, i.e. an integer partition of length at most $d$. We denote the set of such partitions by $\Y_d$, and refer to the integers themselves as the \emph{parts} of the partition. In fact, defining 
\begin{equation}
G_{\la} := \bigoplus_{i} \Z/p^{\la_i}\Z
\end{equation}
and the conjugate partition as in \Cref{fig:ferrers}, we see that the set of all isomorphism classes of $p^d$-torsion abelian groups is just $\{G_{\la'}: \la \in \Y_d\}$.

\begin{figure}[H]
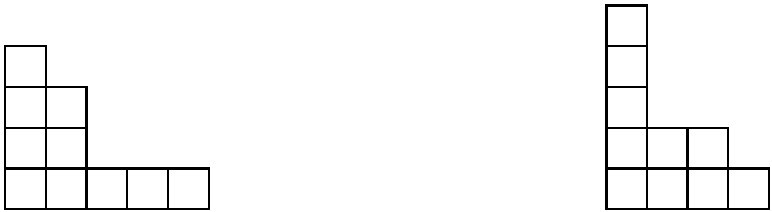

\begin{equation*}
    \raisebox{-.5\height}{\begin{ytableau}
        *(white) \\ 
        *(white) & *(white) \\ 
        *(white) & *(white) \\  
        *(white) & *(white) & *(white) & *(white) & *(white)
    \end{ytableau}}
    \quad \quad \quad \quad \quad \quad \quad \quad \quad \quad \quad \quad \quad \quad \quad 
    \raisebox{.5\height}{\begin{ytableau}
        *(white) \\ 
        *(white) \\  
        *(white) \\ 
        *(white) & *(white) & *(white) \\ 
        *(white) & *(white) & *(white) & *(white)
    \end{ytableau}}
\end{equation*}
\caption{The Young diagram of $\la = (5,2,2,1) \in \Y_4$ (left), and that of its conjugate partition $\la' = (4,3,1,1,1) \in \Y_5$ obtained by flipping the diagram across the diagonal. In general $\la_i' := \#\{j: \la_j \geq i\}$.}\label{fig:ferrers}
\end{figure}

Though this is certainly not trivial to see \emph{a priori}, these $p$-ranks all grow logarithmically:
\begin{equation}\label{eq:log_plus_fluc}
\rank(p^{i-1}G_n) \approx \log_p k(n) + \text{(fluctuations)} \quad \quad \quad \quad \text{as $n \to \infty$,}
\end{equation}
provided the number of products $k(n)$ does not grow too fast (for instance, \eqref{eq:log_plus_fluc} clearly cannot hold if $k(n) \gg p^n$ since the ranks are bounded by $n$). However, the much more interesting finding is that the fluctuations after centering are of order $O(1)$ and can be determined explicitly. Even the order of growth in \eqref{eq:log_plus_fluc} was not previously known in a universal setting, but in this work we are able to show universality even at the level of the exact fluctuations, finding the same random variables discovered in \cite{van2023local} in exactly-solvable cases.

 Hence we would like to show 
\begin{equation}\label{eq:expos_cvg_intro}
\rank(G_n) - \log_p k(n) \to X \quad \quad \quad \quad \text{in distribution as $n \to \infty$}
\end{equation}
for some discrete $\Z$-valued random variable $X$, and similarly for the other $\rank(p^{i-1}G_n)$ and their joint distribution. The limiting joint distribution of $(\rank(p^{i-1}G_n))_{1 \leq i \leq d}$ after centering should thus live on the set of \emph{integer signatures}
\begin{equation}\label{eq:defsigd_intro}
\Sig_d := \{(\la_1,\ldots,\la_d) \in \Z^d: \la_1 \geq \ldots \geq \la_d\}.
\end{equation}
For later we define the \emph{size} of a signature
\begin{equation}\label{eq:sigsize_intro}
|\la| := \sum_{i=1}^d \la_i,
\end{equation}
and note also that the set of nonnegative signatures 
\begin{equation}
\Sig_d^{\geq 0} := \{(\la_1,\ldots,\la_d) \in \Z^d: \la_1 \geq \ldots \geq \la_d \geq 0\}
\end{equation}
is just $\Y_d$.

Unless $k(n)$ is chosen very carefully, there is a trivial obstruction to the convergence \eqref{eq:expos_cvg_intro}, which is that $\rank(G_n)$ is an integer while $\log_p k(n)$ is in general not. This is not just an issue with this example, but a general issue with convergence of finite fluctuations of $\rank(G_n)$ for other $G_n$, in fact with convergence of recentered $\Z$-valued random variables. We treat it in \Cref{thm:matrix_product_intro} by requiring that the shift $-\log_p k(n)$ converges\footnote{Such convergence in $\R/\Z$ is implied by convergence of fractional parts $\{\log_p k(n)\}=\log_p k(n) - \floor{\log_p k(n)}$ to the lift of $\bar{\zeta}$ lying in $[0,1)$. However, convergence in $\R/\Z$ is a more uniform statement and slightly more general, e.g. the sequence $.9,1.1,.99,1.01,\ldots$ converges in $\R/\Z$ but its fractional parts do not converge.} in $\R/\Z$ to some $\bar{\zeta} \in \R/\Z$, as this shift may then simply be subtracted off to obtain a $\Z$-valued random variable. This convergence modulo $\Z$ can always be achieved by passing to a subsequence of $n$. Passing to subsequences is not just a technical restriction, and one actually obtains a one-parameter family of related but nontrivially distinct limit random variables for different subsequences. 

\begin{defi}\label{def:cL_intro}
For $p$ prime and $G$ an abelian $p$-group, we write $n_{max}(G)$ to be the number of chains of proper subgroups $0 = H_0 \lneq H_1 \lneq \ldots \lneq H_k = G$ of maximal length $k=\log_p |G|$. For $d \in \Z_{\geq 1}$, and $\chi \in \R_{>0}$, we define $\cL_{d,p^{-1},\chi}$ to be the unique $\Sig_d$-valued random variable with moments given by
\begin{equation}\label{eq:intro_cL_moments}
\E[p^{\la \cdot \cL_{d,p^{-1},\chi}}] = \frac{((p-1)\chi)^{|\la|}}{|\la|!} n_{max}(G_{\la'})
\end{equation}
for every $\la \in \Y_d$.
\end{defi}
It is not at all obvious that \Cref{def:cL_intro} defines a unique random variable, but we show this is true in \Cref{thm:cL_the_same}. We also show it matches the quite different definition in \cite[Theorem 6.1]{van2023local}, which gave explicit formulas for the probabilities $\Pr(\cL_{d,p^{-1},\chi} = \mu)$. As an example, for the random variable $\cL_{1,p^{-1},\chi}$ which governs the limiting fluctuations of $\rank(G_n) = \corank(A^{(n)}_{k(n)} \cdots A^{(n)}_1 \pmod{p})$, the formula reads 
\begin{equation*}
\Pr(\cL_{1,p^{-1},\chi} = x) = \frac{1}{\prod_{i \geq 1} (1-p^{-i})} \sum_{m \geq 0} e^{-\chi p^{m-x}} \frac{(-1)^m p^{-\binom{m}{2}}}{\prod_{j=1}^m (1-p^{-j})} \quad \quad \quad \quad \text{ for any $x \in \Z$.}
\end{equation*}
It is worth noting that $n_{max}$ is a rational function of $p$ and both \Cref{def:cL_intro} and the different definition \cite[Theorem 6.1]{van2023local} actually make sense and are equivalent for any real $p>1$, but in this work we will always assume $p$ is prime.

As $d$ grows the explicit formulas for $\cL_{d,p^{-1},\chi}$ are much more complicated than the formulas for the moments in \Cref{def:cL_intro}. An interesting feature of $\cL_{1,p^{-1},\chi}$ is that as $\chi$ varies over $[1,p)$ the real-valued random variables $\chi^{-1}p^{\cL_{1,p^{-1},\chi}}$ form a family of solutions to the indeterminate Stieltjes moment problem 
\begin{equation}
\E[X^m] = \frac{\prod_{i=1}^m (p^i-1)}{m!}, \quad \quad \quad \quad m=0,1,2,\ldots,
\end{equation}
with disjoint discrete supports $\chi^{-1} p^\Z$; this is a simple consequence of \eqref{eq:intro_cL_moments}. 

We may now state the theorem. We will always denote the fractional part of a real number $x$ by $\{x\} = x-\floor{x}$, and we write
\begin{equation}
\near{x} = \operatorname{argmin}_{y \in \Z} |x-y|
\end{equation}
for the nearest integer to $x$. It does not matter what convention is chosen for half-integers, since we will only apply $\near{\cdot}$ to sequences whose terms are getting closer and closer to integers (in other words, converging to $0$ in $\R/\Z$).

\begin{theorem}\label{thm:matrix_product_intro} 
Fix $p$ prime, let $\xi$ be a $\Z$-valued random variable such that $\xi \pmod{p}$ is nonconstant. For each $n \in \mathbb{Z}_{\geq 1}$ let $A_{i}^{(n)}, i \geq 1$ be iid $n \times n$ matrices over $\Z$ with iid $\xi$ entries. Let $(k(n))_{n \geq 1}$ be a sequence of natural numbers such that $k(n) \to \infty$ as $n \rightarrow \infty$ and $k(n) = O(e^{(\log n)^{1-\eps}})$ for some $0<\eps<1$. Define
\begin{equation}
G_n = \cok(A_{k(n)}^{(n)} \cdots A_{1}^{(n)})[p^\infty],
\end{equation}
and let $(n_j)_{j \geq 1}$ be any subsequence for which $-\log _{p} k(n_{j})$ converges in $\R/\Z$ to some $\bar{\zeta}$ as $j \to \infty$, and let $\zeta \in \R$ be any lift of $\bar{\zeta}$. Then for any $d \in \Z_{\geq 1}$,
\begin{equation}\label{eq:rmt_bulk_intro}
(\rank(p^{i-1}G_{n_j})-\near{\log _{p}(k(n_{j}))+\zeta})_{1 \leq i \leq d} \rightarrow \cL_{d,p^{-1}, p^{-\zeta}/(p-1)}
\end{equation}
in distribution as $j \rightarrow \infty$, where $\cL$ is as in \Cref{def:cL_intro}. Furthermore, the exact same result holds for matrices over the $p$-adic integers $\Z_p$.
\end{theorem}

\begin{rmk}
The definition of $\bar{\zeta}$ ensures that $\log _{p}(k(n_{j}))+\zeta$ is close to $\Z$ for large $j$, so the rounding $\near{\log _{p}(k(n_{j}))+\zeta}$ has little effect. Note also that the choice of lift $\zeta$ of $\bar{\zeta}$ changes both the shift on the left-hand side of \eqref{eq:rmt_bulk_intro}, and the parameter $p^{-\zeta}/(p-1)$ on the right-hand side; indeed, changing the $\chi$ parameter in \Cref{def:cL_intro} by an integer power of $p$ simply shifts $\cL_{d,p^{-1},\chi}$ by the same integer, as is easily checked via the moments. For an equivalent formulation which does not require the rounding $\near{\cdot}$ or the choice of lift, see \Cref{thm:noround_matrix_products}.
\end{rmk}

To our knowledge, \Cref{thm:matrix_product_intro} is the first universality result for matrix cokernels which do not converge to a fixed random group, and in particular the first one for $\cok(A_k \cdots A_1)$ where $k$ is not fixed. Furthermore, recalling that the sequence $\rank(p^{i-1}G_n), i = 1,\ldots,d$ determines $G_n/p^dG_n$, we have that for fixed $d$ the above completely determines the asymptotics of this group. The condition that the entry distribution be nonconstant modulo $p$ applies to very generic distributions including $0-1$ matrices, matrices with entries chosen uniformly from $[-b,b]$, and others, in addition to the additive Haar measure on $\Z_p$ considered in \cite{van2023local}. Though it establishes very fine information on the fluctuations of $\rank(p^{i-1}G_{n})$, it is quite different from previous results on convergence of random finite groups $(G_n)_{n \geq 1}$ to a random finite group $G$. 

Our motivation to establish such a result comes instead from complex random matrix theory and the interacting particle system perspective. For any nonsingular $A \in \Mat_n(\Z)$, the cokernel decomposes into cyclic factors
\begin{equation}
\cok(A)[p^\infty] \cong \bigoplus_{i=1}^n \Z/p^{\la_i}\Z
\end{equation}
for some nonnegative signature $\la = (\la_1,\ldots,\la_n)$. An equivalent description of $\la$ is that by Smith normal form, there always exist $U,V \in \GL_n(\Z)$ such that $UAV = \diag(a_1,\ldots,a_n)$ and $p^{\la_i}$ is the highest power of $p$ dividing $a_i$. This is analogous to singular value decomposition, and the numbers $\la_i$ are often called the \emph{singular numbers} of $A$ in the related $p$-adic setting. The present work as well as previous ones \cite{van2020limits,nguyen2022universality,van2023local,van2023reflecting} are partially inspired by the broad literature on singular values of matrix products, which begins with Bellman \cite{bellman1954limit} and Furstenberg-Kesten \cite{furstenberg1960products} in the 1950s and continues to recent works such as Akemann-Burda-Kieburg \cite{akemann2014universal,akemann2019integrable}, Crisanti-Paladin-Vulpiani \cite{crisanti2012products}, Liu-Wang-Wang \cite{liu2018lyapunov}, and others, see \cite[Appendix A]{van2023local} for a more thorough survey and comparison with the discrete case.

If one fixes $n$ and lets $k$ vary, the cokernel
\begin{equation}
\cok(A_k^{(n)} \cdots A_1^{(n)})[p^\infty] \cong \bigoplus_{i=1}^n \Z/p^{\la_i(k)}\Z
\end{equation}
yields a stochastic process $(\la_1(k),\ldots,\la_n(k))$ in discrete time $k=0,1,2,\ldots$. One can visualize $(\la_1(k),\ldots,\la_n(k))$ by a Young diagram as in \Cref{fig:ferrers}; as $k$ increases, this gives a growth process on such Young diagrams. 

The surprising observation of \cite{van2023reflecting} was that when $n$ is large, each of the singular numbers $\la_i(k), i \gg 1$ behaves as though it has a `clock' which rings at random intervals independent of all of the others. The singular number lies dormant until this `clock' rings, and then increases by $1$ unless $\la_i(k) = \la_{i-1}(k)$, in which case it is `blocked' by the next singular number, and certain simple interactions between them occur. These random dynamics by which the singular numbers $\la_i(k), i \gg 1$ evolve as $k$ increases are encapsulated in a continuous-time interacting particle system, the reflecting Poisson sea mentioned earlier, for which we refer to \cite[Sections 1.2 and 2]{van2023reflecting} for definitions. Its distribution at a single time $T$ was shown to be determined by marginals distributed as $\cL_{d,p^{-1},\chi}$ in \cite[Theorem 8.2]{van2023reflecting}. \Cref{thm:matrix_product_intro} thus shows that the convergence of matrix product cokernels to the reflecting Poisson sea is universal at the level of single-time marginals. 

\begin{rmk}
In particular, the $d=1$ case of \Cref{thm:matrix_product_intro} implies that when $k(n)$ grows, the matrix $A_{k(n)}^{(n)} \cdots A_1^{(n)}$ is getting closer to being singular, and that the corank of $A_{k(n)}^{(n)} \cdots A_1^{(n)} \pmod{p}$ is likely to grow relatively fast (with order $\log_p(k(n))$). In the complex case, when $A_{i}^{(n)}$ are independent matrices of iid standard gaussian, the results of Burda-Jarosz-Livan-Nowak-Swiech \cite{BJLNS} also support a similar situation that $A_{k(n)}^{(n)} \cdots A_1^{(n)}$ is close to being singular, by showing that the least singular value of $A_{k(n)}^{(n)} \cdots A_1^{(n)}$ is likely to decay to zero relatively fast (with order $n^{-k(n)/2}$). To the best of our knowledge, there has been no universality result in the literature concerning the spectrum of $A_{k}^{(n)} \cdots A_1^{(n)}$ in the complex setting for matrices with generic iid entries and $n,k \to \infty$, though for unitarily-invariant distributions such universality was established by Ahn \cite{ahn2022extremal}.
\end{rmk}

\subsection{Fluctuations of random abelian groups of growing size} The aforementioned works on the moment method for finite groups show that the so-called \emph{$H$-moments} $ \#\Sur(G_n,H)$ converge,
\begin{equation}
\lim_{n \to \infty} \E[\#\Sur(G_n,H)] = \E[\#\Sur(G,H)],
\end{equation}
for all $H$ from the appropriate class of groups. Under suitable conditions on the growth of $\E[\#\Sur(G,H)]$ as $H$ ranges, see for instance \cite[Theorems 8.2 and 8.3]{W0}, this is sufficient to conclude $G_n \to G$ in distribution. This is a very useful tool because asymptotics of $\E[\#\Sur(G_n,H)]$ can be computed when $G_n$ is the cokernel of a random matrix, for several broad classes of matrix distributions including products of random matrices with iid entries.

In our setting, $\lim_{n \to \infty} \E[\#\Sur(G_n,H)] = \infty$ for any nontrivial abelian $p$-group $H$, so this method of course does not apply. What is surprising, in our opinion, is that a similar method based on $H$-moments \emph{does} work. Namely, though the moments $\E[\#\Sur(G_n,H)]$ diverge as $n \to \infty$ and there is no limiting finite group $G$, we show that convergence of certain rescalings of these moments is enough to conclude convergence of the ranks $\rank(p^{i-1}G_n)$ to $\Sig_d$-valued random variables, as we describe now.

For our general results, it is better to work on a completion of $\Sig_d$ to avoid issues with escape of mass:
\begin{defi}\label{def:extended_sigs}
For any $d \in \N$ we define the set of \emph{extended integer signatures}
\begin{equation}
\bSig_d := \Big\{(\la_1,\ldots,\la_d) \in (\Z \cup \{-\infty\})^d: \la_1 \geq \ldots \geq \la_d\Big \}
\end{equation}
with the convention $a > -\infty$ for every $a \in \Z$. 

When speaking of weak convergence of measures on $\bSig_d$, we will always mean with respect to the topology whose open sets are generated by singleton sets $\{a\}$ and intervals $[-\infty,a] := \{-\infty\} \cup \Z_{\leq a}$ for $a \in \Z$; likewise, measures on $\bSig_d$ will always mean measures with respect to the Borel $\sigma$-algebra associated to the topology, which is just the discrete $\sigma$-algebra. 
\end{defi}

To state our moment growth condition, we will say that a function $\xi: \R \to \R$ has \emph{superlinear growth} (or simply is superlinear) if 
\begin{equation}
\lim_{x \to \infty} \xi(x) - \alpha x = \infty
\end{equation}
for every $\alpha \in \R_+$.

\begin{defi}\label{def:nicely_behaved}
For fixed constant $q > 1$ and $d \in \Z_{\geq 1}$, we say that a collection of constants $\{C_\la: \la \in \Sig_d^{\geq 0}\}$ is \emph{nicely-behaved} (with respect to $q$) if for any fixed integers $\la_2 \geq \ldots \geq \la_d \geq 0$, there exists $F > 0$ and $\xi$ of superlinear growth for which
\begin{equation}\label{eq:C_la_bound_t}
|C_{(\la_1,\ldots,\la_d)}| \leq F q^{\frac{1}{2}\la_1^2 - \xi(\la_1)}
\end{equation}
for all integers $\la_1 \geq \la_2$. If $d=1$, this should be interpreted as the condition that
\begin{equation}
|C_{(\la_1)}| \leq F q^{\frac{1}{2} \la_1^2-\xi(\la_1)}
\end{equation}
for all $\la_1 \in \Z_{\geq 0}$.
\end{defi}

The main result is the following.

\begin{theorem}[Rescaled moment method]\label{thm:group_convergence_intro}
Fix $p$ prime and $d \in \Z_{\geq 1}$. Let $(G_n)_{n \geq 1}$ be a sequence of random finitely-generated abelian $p$-groups and $(c_n)_{n \geq 1}$ a sequence of real numbers such that the following hold:
\begin{enumerate}[label=(\roman*)]
\item For every $\la \in \Sig_d^{\geq 0}$, $\E[\#\Hom(G_n,G_{\la'})]/p^{|\la|c_n}$ has a finite limit as $n \to \infty$,
\vskip .1in
\item The sequence $(-c_n)_{n \geq 1}$ converges in $\R/\Z$ to some $\bar{\zeta}$, and
\vskip .1in
\item For $\zeta \in \R$ any real lift of $\bar{\zeta}$, the collection $\{C_\la: \la \in \Sig_d^{\geq 0}\}$, defined by 
\begin{equation}\label{eq:renorm_moment_conv}
C_\la := p^{-\zeta |\la|} \lim_{n \to \infty} \frac{\E[\#\Hom(G_n,G_{\la'})]}{p^{|\la|c_n}},
\end{equation} 
is nicely-behaved with respect to $p$.
\end{enumerate}
Then there exists a unique $\bSig_d$-valued random variable $X = (X_1,\ldots,X_d)$ with moments $\E[p^{\sum_{i=1}^d \la_i X_i}] = C_\la$ for all $\la \in \Sig_d^{\geq 0}$, and the $\bSig_d$-valued random variables $X^{(n)}:=(\rank(p^{i-1}G_n) - \near{c_n + \zeta})_{1 \leq i \leq d}$ converge in distribution to $X$ as $n \to \infty$.
\end{theorem}


\begin{rmk}
In later work, Sawin-Wood \cite{sawin2022moment} showed how to extract the distribution of random algebraic structures, including groups\footnote{In the abelian group case, there is also an alternative proof from symmetric function theory in \cite{van2024symmetric}.}, from their moments. We give a partial analogue in \Cref{thm:dist_existence_intro} in the next section.
\end{rmk}

\begin{rmk}
In some cases such as the matrix product case of \Cref{thm:matrix_product_intro}, it is not actually necessary to pass to subsequences; one may instead show that the limiting cokernels are asymptotically well-approximated by $\cL_{d,p^{-1},\chi}$ for appropriate $\chi = \chi(n)$ as $n \to \infty$, see \cite[Theorem 1.2]{van2023local}. Such a statement can be deduced from the subsequence version in \Cref{thm:matrix_product_intro}, but this requires in addition the continuity of the probabilities in the parameter $\chi$. In that context this continuity follows from explicit formulas, but the analogue in the general setting of \Cref{thm:group_convergence_intro} is not so clear. Hence we stick to the subsequence formulation in this work.
\end{rmk}

\subsection{Methods} Since
\begin{equation*}
\#\Hom(G_{\mu'},G_{\la'}) = p^{\sum_i \mu_i \la_i}
\end{equation*}
(see e.g. \cite[Lemma 7.1]{W0}), the Hom-moment $\E[\#\Hom(G_n,G_{\la'})]$ is an exponential mixed moment in the usual sense of the random variables $\rank(p^{i-1}G_n),i=1,2,\ldots$. It might seem that one could obtain results such as \Cref{thm:group_convergence_intro} from classical probability, but in fact the condition that $\{C_\la: \la \in \Sig_d^{\geq 0}\}$ be nicely-behaved is much weaker than what is needed to ensure that these exponential moments determine a unique random variable, i.e. the classical moment problem is indeterminate. However, $\rank(p^{i-1}G_n)$ is always an integer, giving access to techniques which do not apply for arbitrary real-valued random variables. The basic technique we use was developed for $\rank(G_n)$ by Heath-Brown \cite{heath1993size,heath1994size} in the case when $\rank(G_n)$ converges without recentering to a random $\Z_{\geq 0}$-valued random variable, and extended by Wood \cite{W0} to convergence of $G_n$ to a random group $G$. Weaker versions of those results are recovered from \Cref{thm:group_convergence_intro} in the case when $c_n$ does not go to $\infty$, since then the convergence of $X^{(n)}$ above implies convergence of $G_n/p^d G_n$ to a random group. In this case, a weaker growth condition on the moments\footnote{Called `well-behaved' in \cite{sawin2022moment}, hence our terminology in \Cref{def:nicely_behaved}.} suffices, see \cite[Theorem 8.2]{W0}. Thus \Cref{thm:group_convergence_intro} naturally extends the moment method of \cite[Theorem 8.3]{W0} to the case when $\rank(G_n)$ diverges. Our proof of \Cref{thm:group_convergence_intro} is heavily inspired by \cite{W0}, but many additional complications arise when $\rank(G_n)$ diverges, see the discussion directly before \Cref{subsec:mom_conv_proof}.

To apply \Cref{thm:group_convergence_intro} to any specific sequence $(G_n)_{n \geq 1}$, the scaling constants $c_n$ must be chosen appropriately to obtain a meaningful statement: if $c_n \gg \rank(G_n)$ then the limits $C_\la$ in \eqref{eq:renorm_moment_conv} will always be $0$ except for $\la = (0,\ldots,0)$, and the conclusion of the theorem will simply be that $X^{(n)}$ converges weakly to the point mass $\delta_{(-\infty,\ldots,-\infty)}$. One must take $c_n \approx \E[\rank(G_n)]$: to obtain \Cref{thm:matrix_product_intro} for example, one must take $c_n = \log_p k(n)$. After doing so, the convergence in distribution stated in \Cref{thm:matrix_product_intro} is reduced to a statement about the limiting normalized Hom-moments\footnote{We remark that the Hom-moments $\E[\#\Hom(G_n,G_{\la'})]$ are always given by finite linear combinations of Sur-moments $\E[\#\Sur(G_n,H)]$ as $H$ ranges over subgroups of $G_{\la'}$; provided $\rank(p^{d-1}G_n) \to \infty$ as $n \to \infty$, the proportion of homomorphisms to a fixed $p^d$-torsion group which are surjections always goes to $1$, so one can replace $\Hom$ by $\Sur$ in \Cref{thm:group_convergence_intro}. The reason we state it this way is so that it uniformly covers the case when $\rank(p^{d-1}G_n) \not \to \infty$.} $\E[\#\Hom(G_n,G_{\la'})]$. This is nontrivial, but luckily for fixed $k$ the asymptotics of the moments of $G_n = \cok(A_{k}^{(n)} \cdots A_{1}^{(n)})[p^\infty]$ were computed in \cite{nguyen2022universality}, and so we are able to modify those computations to keep track of the dependence on $k$. More precisely, as the dependence over $k$ is crucial to us, in Section \ref{sect:support} we will reintroduce the key ingredients from \cite{W0,W1} with explicit constants. Later in Section \ref{section:prod:moments} we will exploit these tools by introducing two key parameter sequences $\eps_{k,n}, \eps_{k,n}'$ (see Proposition \ref{prop:single:k}) to control the growth rates in terms of $k$.

\subsection{Other growing random groups?} Though our only application of \Cref{thm:group_convergence_intro} in this paper is to cokernels of random matrix products, random groups of diverging size occur in natural examples beyond these. We conclude by mentioning a few.

\begin{example}
Recent work of M\'esz\'aros \cite{meszaros2024phase} treats $n \times n$ banded matrices $A^{(n)}$ with iid entries on the central band of width $w_n$ and all other entries $0$; the iid entries are taken in the $p$-adic integers, though the same proofs work over the integers. In particular, \cite[Theorem 5]{meszaros2024phase} shows that if $w_n-\log_p n \to -\infty$, then the random variables $\corank(A^{(n)} \pmod{p})$ do not form a tight sequence, i.e. there is escape of mass. No results on the growth or limiting fluctuations of these random variables are currently known; to the best of our knowledge \cite{meszaros2024phase} is the first work to treat banded matrices in this discrete setting.
\end{example}

\begin{example}
Let $A^{(n)}$ be the adjacency matrix of an Erd\"os-R\'enyi random graph on $n$ vertices, in the critical regime where each edge is taken with probability $\sim 1/n$. In this case, it is shown by Glasgow-Kwan-Sah-Sawhney \cite{GKSS,GKSS'} that the corank of $A^{(n)}$ over $\R$ has order $\text{const} \cdot n$, and its fluctuations are Gaussian. The literature on random groups has never treated the critical regime to our knowledge, however: we are not even aware of any work on $\corank(A^{(n)} \pmod{p})$, much less for $\cok(A^{(n)})[p^\infty]$, though it is believed \cite{sawhney2024} that $\corank(A^{(n)} \pmod{p})$ is also order $n$ and also has Gaussian fluctuations. This matrix $A^{(n)}$ is just a symmetric $0-1$ matrix with sparse entry distribution, and one may ask the same question for nonsymmetric sparse matrices, alternating sparse matrices, etc. in the critical regime where entries are nonzero with probability $\sim 1/n$. Bl{\"o}mer-Karp-Welzl \cite{blomer1997rank} and Coja-Oghlan-Erg{\"u}r-Gao-Hetterich-Rolvien \cite{coja2020rank} have also studied other sparse matrix distributions over $\F_p$ and shown the corank goes to $\infty$ with $n$ in their regimes, but without treating the fluctuations. 
\end{example}  

Just as the limiting cokernel distributions of non-sparse matrices depend on the symmetry class but are universal within symmetry classes, it is natural to wonder to what extent the cokernel fluctuations of random matrix models with diverging corank (modulo $p$) depend on the matrix distribution. We are not aware of any works apart from the present one which treat the full cokernel for matrix distributions where the growth of the corank (modulo $p$) is unbounded, but hope that \Cref{thm:group_convergence_intro} and the ideas behind it can help begin to answer some of these basic questions in the future.

\subsection{Outline} In \Cref{sec:moment_method} we prove \Cref{thm:group_convergence_intro}, along with another result (\Cref{thm:dist_existence_intro}) which extracts the probabilities of a $\Sig_d$-valued random variable from its moments; we do not use the latter in our random matrix context, but it is a basic result which may be useful in the future. In \Cref{sect:support} we record various supporting lemmas for a single matrix from \cite{W0}. In \Cref{section:prod:moments} we establish asymptotic control of the moments of matrix products, and in \Cref{sect:n_k} we combine these ingredients to prove \Cref{thm:matrix_product_intro}.

\section{The rescaled moment method for groups and the moment method for random signatures}\label{sec:moment_method}

The goal of this section is to prove \Cref{thm:group_convergence_intro} and \Cref{thm:dist_existence_intro}. Our proof of \Cref{thm:group_convergence_intro} also mostly takes place in the setting of random extended signatures: we prove the more general result \Cref{thm:moment_convergence_general} where the prime $p$ is replaced by a real parameter $q$, and only specialize to groups at the last possible moment. We expect these more general statements may be useful elsewhere: for instance, we remark that one may use \Cref{thm:moment_convergence_general} to give an independent proof of a certain result \cite[Theorem 10.2]{van2023local} regarding convergence of an interacting particle system, which is defined without reference to groups and features a parameter analogous to $p$ which does not have to be prime.

The more general result is the following, and for the sake of exposition we explain immediately below how it implies the statement \Cref{thm:group_convergence_intro} regarding Hom-moments of abelian $p$-groups.

\begin{theorem}\label{thm:moment_convergence_general}
Fix $q \in \R_{>1}$ and $d \in \Z_{\geq 1}$, and let $\{C_\la: \la \in \Sig_d^{\geq 0}\}$ be nicely-behaved with respect to $q$ (see \Cref{def:nicely_behaved}). Let $(X^{(n)})_{n \geq 1}$ be any sequence of $\bSig_d$-valued random variables such that 
\begin{equation}\label{eq:limit_moments_general}
\lim_{n \to \infty} \E[q^{\sum_{i=1}^d \la_i X_i^{(n)}}] = C_\la 
\end{equation}
for every $\la \in \Sig_d^{\geq 0}$. Then there exists a unique $\bSig_d$-valued random variable $X = (X_1,\ldots,X_d)$ with moments
\begin{equation}
\E[q^{\sum_{i=1}^d \la_i X_i}] = C_\la,
\end{equation}
and $X^{(n)} \to X$ in distribution as $n \to \infty$.
\end{theorem}

\begin{proof}[Proof of \Cref{thm:group_convergence_intro} from \Cref{thm:moment_convergence_general}]
Since 
\begin{equation}
\#\Hom(G,G_{\la'}) = p^{\sum_{i=1}^d \la_i \cdot \rank(p^{i-1}G)}
\end{equation}
for $\la \in \Sig_d^{\geq 0}$, the hypothesis \eqref{eq:renorm_moment_conv} yields that 
\begin{equation}\label{eq:almost_x_moments}
\lim_{n \to \infty} \E[p^{\sum_{i=1}^d \la_i \cdot (\rank(p^{i-1}G_n) - c_n - \zeta)}] = C_\la. 
\end{equation}
By hypothesis, 
\begin{equation}\label{eq:int_frac_part}
\lim_{n \to \infty} c_n + \zeta - \near{c_n+\zeta} = 0.
\end{equation}
Hence letting
\begin{equation}
X^{(n)} := (\rank(p^{i-1}G_n) - \near{c_n+\zeta})_{1 \leq i \leq d}
\end{equation}
as in the theorem statement, \eqref{eq:almost_x_moments} and \eqref{eq:int_frac_part} together imply
\begin{equation}
\lim_{n \to \infty} \E[p^{\sum_{i=1}^d \la_i X^{(n)}_i}] = C_\la. 
\end{equation}
Now \Cref{thm:moment_convergence_general} implies that $X^{(n)} \to X$ in distribution, where $X$ is the unique $\bSig_d$-valued random variable with 
\begin{equation*}
\E[p^{\sum_{i=1}^d \la_i X_i}] = C_\la. \qedhere
\end{equation*}
\end{proof}

We also prove explicit formulas in this setting which allow one to extract the weights and the `cumulative distribution function' with respect to the dominance partial order, which we now define.

\begin{defi}\label{def:dominance}
Given $\mu,\nu \in \bSig_d$, the \emph{dominance order} $\leq$ is the partial order defined by 
\begin{equation}
\mu \leq \nu \iff \sum_{j=1}^i \mu_j \leq \sum_{j=1}^i \nu_j \text{ for all }1 \leq i \leq d,
\end{equation}
where as usual we take the convention $-\infty + a = -\infty < a$ for every $a \in \Z$ when working with extended signatures.
\end{defi}

In what follows we frequently use the $q$-Pochhammer symbol defined as
$$(a;q)_k = \prod_{i=0}^{k-1}(1-aq^i),$$
for $k \geq 0$, with the convention $(a;q)_0=1$ in the case where the product is empty.

\begin{theorem}\label{thm:dist_existence_intro}
Let $q \in \R_{>1}$, $d \in \Z_{\geq 1}$, and $\meas$ be a probability measure on $\bSig_d$ such that the moments
\begin{equation}
\sum_{\mu \in \bSig_d} \meas(\{\mu\}) q^{\sum_{i=1}^d \la_i \mu_i} =: C_\la
\end{equation}
exist and $\{C_\la: \la \in \Sig_d^{\geq 0}\}$ is nicely-behaved with respect to $q$. Then it is the unique such measure, and its weights and `cumulative distribution function' on $\nu \in \Sig_d$ are given by
\begin{equation*}
\meas(\{\nu\}) = \sum_{\substack{\beta \in \Sig_d: \\ \beta \leq \nu}} \prod_{i=1}^d \frac{(-1)^{\sum_{j=1}^i \nu_j - \beta_j} q^{-\binom{\sum_{j=1}^i \nu_j - \beta_j}{2}}}{(q^{-1};q^{-1})_{\sum_{j=1}^i \nu_j - \beta_j}(q^{-1};q^{-1})_{\beta_{i-1}-\beta_i-\sum_{j=1}^i \nu_j - \beta_j}} \sum_{\substack{\la \in \Sig_d^{\geq 0}: \\ \la_2 \leq \beta_1-\beta_d}} a_\beta(\la) C_\la,
\end{equation*}
and 
\begin{multline*}
\meas(\{\mu \in \bSig_d: \mu \leq \nu \}) \\ = \sum_{\substack{\beta \in \Sig_d: \\ \beta \leq \nu}} \prod_{i=1}^d \frac{(-1)^{\sum_{j=1}^i \nu_j - \beta_j} q^{-\binom{\sum_{j=1}^i \nu_j - \beta_j+1}{2}}}{(q^{-1};q^{-1})_{\sum_{j=1}^i \nu_j - \beta_j}(q^{-1};q^{-1})_{\beta_{i-1}-\beta_i-\sum_{j=1}^i \nu_j - \beta_j}} \sum_{\substack{\la \in \Sig_d^{\geq 0}: \\ \la_2 \leq \beta_1-\beta_d}} a_\beta(\la) C_\la
\end{multline*}
where $a_\beta(\la)$ is as defined in \Cref{thm:H_from_melanie}. 
\end{theorem}

\begin{rmk}
Note that the formulas in \Cref{thm:dist_existence_intro} do not make sense for $\nu \in \bSig_d \setminus \Sig_d$.
\end{rmk}

\begin{rmk}
Let us also explain why it is important to work over $\bSig_d$ rather than $\Sig_d$. Even in the case $d=1$, the Dirac measure $\delta_{(n)}(\cdot)$ on $\Sig_1 = \Z$ has moments which converge (to $0$) as $n \to -\infty$, but the measure itself converges weakly (with respect to the discrete topology) to the zero measure. With respect to the topology of \Cref{def:extended_sigs}, however, it converges weakly to $\delta_{(-\infty)}(\cdot)$, which is still a probability measure. Naively, one might thus try to write a robustness theorem where the limit measure may have total mass less than $1$, which is no big issue. However, one still runs into problems for $d \geq 2$. For instance, the measure $\delta_{(0,n)}(\cdot)$ converges weakly (with respect to the discrete topology) to the zero measure, but the marginal distribution of the first coordinate is $\delta_{(0)}$. Hence the operations of taking marginals and taking limits do not commute, which seems not ideal. This is repaired by considering $\bSig_2$, as then the weak limit is $\delta_{(0,-\infty)}(\cdot)$, and this seems to be the best way to treat the escape of mass issue.
\end{rmk}

\begin{rmk}
It is somewhat surprising that the formulas for the weights and the CDF are almost identical, differing only in one power of $q$; this takes its origin in the elementary result \Cref{thm:int_ind_and_cdf}. Note that in \cite{sawin2022moment}, they also prove existence of probability measures provided that the sums defining $\meas(\{\nu\})$ are nonnegative. The reason we do not state a result of this type is that our formulas are only valid for $\nu \in \Sig_d$, and while one can use these to extract formulas for general $\nu \in \bSig_d$ (this is implicitly done in \Cref{thm:observables_suffice}), they are more complicated.
\end{rmk}

The strategy to prove \Cref{thm:moment_convergence_general} is the following. The $\la$-moment
\begin{equation}\label{eq:C_system}
C_\la = \sum_{\mu \in \bSig_d} \Pr(X = \mu) q^{\sum_{i=1}^d \la_i \mu_i}
\end{equation}
is an infinite linear combination of probabilities $\Pr(X = \mu)$. So, one wants to explicitly invert this system and write each $\Pr(X=\mu)$ in terms of the $C_\la$, in such a way that the coefficients are provably small enough that no analytic difficulties arise. The method of \cite{W0} is to introduce a family of functions $\hH_{d,q,\beta}(\mu)$ of $\mu \in \bSig_d$ which are indexed by $\beta \in \Sig_d$. These are certain infinite linear combinations of functions $\mu \mapsto q^{\sum_{i=1}^d \la_i \mu_i}$, and their key property is that $\hH_{d,q,\beta}(\mu)$ is nonzero if and only if $\mu \leq \beta$ in the dominance order\footnote{\cite[Lemma 8.1]{W0} proves a slightly weaker statement in terms of the lexicographic order, which is implied by this one. We mention also that the notation $\hH$ is not used in \cite{W0}, only the function $H$ of \Cref{thm:H_from_melanie}.}. In other words, these functions triangularize the system of equations \eqref{eq:C_system}. 

If the law of $X$ were supported on nonnegative signatures, as it is in \cite{W0} and other moment method works, the expectations of these functions $\hH_{d,q,\beta}$ are enough to determine the probabilities by taking finite linear combinations, and the coefficients featured in those linear combinations do not matter. However, in our case this is no longer true: the linear combinations are infinite, and so issues with interchanging sums arise. To deal with this, we explicitly invert the system, and write the indicators $\bbone(\mu=\nu)$ as an explicit infinite linear combination of $\hH_{d,q,\beta}(\mu)$ as $\beta$ ranges (\Cref{thm:prob_from_observables}). It is not at all obvious that the functions $\hH_{d,q,\beta}(\mu)$ of \cite{W0} would be explicitly invertible in this manner, nor that the coefficients appearing in the inversion would be small enough not to create analytic difficulties. Thankfully this is true---provided one assumes slightly more stringent moment growth conditions than in \cite{W0}, as we do in \Cref{def:nicely_behaved}, c.f. \cite[Theorem 8.2]{W0}. 

The fact that the entries in $\bSig_d$ are not bounded below creates difficulties elsewhere in the proof. Extra work (\Cref{thm:observables_suffice}) is required after proving the explicit formulas in \Cref{thm:prob_from_observables}, since these are only valid on $\Sig_d$ rather than $\bSig_d$. To prove \Cref{thm:moment_convergence_general}, it is also not enough to extract the probabilities $\Pr(X=\mu)$, as one must commute the limit as $n \to \infty$ through this inversion to obtain $\Pr(X^{(n)}=\mu) \to \Pr(X=\mu)$ from convergence of moments of $X^{(n)}$ to $X$. An argument of this type is given in the proof of \cite[Theorem 8.3]{W0}, but it breaks down once one considers $\bSig_d$ instead of $\Sig_d^{\geq 0}$; in order to repair it, one must introduce a cutoff parameter $c$ and divide the space $\bSig_d$ into subsets in a nontrivial manner. We now proceed to the proof.

\subsection{Proof of \Cref{thm:moment_convergence_general}.}\label{subsec:mom_conv_proof}

\begin{lemma}\label{thm:H_from_melanie} 
Given a positive integer $d$, a real $q > 1$, and $\beta \in \Sig_d$, define the power series
\begin{align}\label{eq:H_def_formula}
\begin{split}
&H_{d,q,\beta}(z_1,\ldots,z_d) := \\ 
&\prod_{j \geq \beta_1+1} \left(1-\frac{z_1}{q^j}\right)\prod_{j=\beta_{1}+\beta_{2}+1}^{2 \beta_{1}}\left(1-\frac{z_{2}}{q^{j}}\right) \prod_{j=\beta_{1}+\beta_{2}+\beta_{3}+1}^{\beta_{1}+2 \beta_{2}}\left(1-\frac{z_{3}}{q^{j}}\right) \cdots \prod_{j=\beta_{1} + \cdots+\beta_{d}+1}^{\beta_{1}+\cdots+\beta_{d-2}+2 \beta_{d-1}}\left(1-\frac{z_{d}}{q^{j}}\right) \\ 
&\quad \quad \quad \quad = \sum_{\substack{\la \in \Sig_d^{\geq 0} \\ \la_2 \leq \beta_1-\beta_d}} a_\beta(\la) z_1^{\la_1-\la_2} z_2^{\la_2-\la_3} \cdots z_d^{\la_d}
\end{split}
\end{align}
in $z_1,\ldots,z_d$. Then the Taylor coefficients of $H_{d,q,\beta}$ satisfy the bound 
\begin{equation}
\label{eq:a_la_bound}
\left|a_\beta(\la)\right| \leq E q^{-\beta_{1} (\la_1-\la_2) - \binom{\la_1-\la_2+1}{2}},
\end{equation}
for a constant $E$ depending on $d, q$, and $\beta$. 

\end{lemma}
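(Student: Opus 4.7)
The plan is to exploit the crucial observation that $H_{d,q,\beta}(z_1,\ldots,z_d)$ factors as a product of univariate series, each involving only a single variable $z_i$. Writing $H_{d,q,\beta} = \prod_{i=1}^d P_i^{(\beta)}(z_i)$ for the factors appearing in \eqref{eq:H_def_formula}, the Taylor coefficient of $z_1^{m_1} z_2^{m_2} \cdots z_d^{m_d}$ is simply $\prod_{i=1}^d [z_i^{m_i}] P_i^{(\beta)}(z_i)$. I would therefore analyze each factor separately, setting $m_i := \la_i - \la_{i+1}$ with the convention $\la_{d+1} := 0$.

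First, for each $i \geq 2$, the factor $P_i^{(\beta)}(z_i)$ is a finite product of exactly $\beta_{i-1} - \beta_i$ linear terms, hence a polynomial of degree $\beta_{i-1}-\beta_i$ in $z_i$. Therefore $[z_i^{m_i}] P_i^{(\beta)}$ vanishes unless $0 \leq m_i \leq \beta_{i-1}-\beta_i$. Summing these constraints over $i = 2, \ldots, d$ gives
\begin{equation*}
\la_2 = \sum_{i=2}^d m_i \leq \sum_{i=2}^d (\beta_{i-1}-\beta_i) = \beta_1 - \beta_d,
\end{equation*}
which is exactly the support condition on $\la$. Moreover, each $|[z_i^{m_i}] P_i^{(\beta)}|$ is (up to sign) an elementary symmetric polynomial in $\{q^{-j}\}$ over an index set of size $\beta_{i-1}-\beta_i$, and hence is bounded by a constant depending only on $\beta$, $q$, and $d$. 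Denote the product of these bounds over $i = 2,\ldots,d$ by $E'$.

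Next, for $i=1$, the infinite product equals $P_1^{(\beta)}(z_1) = \prod_{j \geq \beta_1+1}(1-z_1 q^{-j}) = (z_1 q^{-\beta_1-1}; q^{-1})_\infty$ and converges as a formal power series in $z_1$ because $q > 1$. I would then apply Euler's identity
\begin{equation*}
(x; r)_\infty = \sum_{n \geq 0} \frac{(-1)^n r^{\binom{n}{2}}}{(r;r)_n} x^n, \qquad |r| < 1,
\end{equation*}
with $r = q^{-1}$ and $x = z_1 q^{-\beta_1-1}$. After collecting exponents via $n(\beta_1+1) + \binom{n}{2} = n\beta_1 + \binom{n+1}{2}$, this yields
\begin{equation*}
[z_1^{m_1}] P_1^{(\beta)}(z_1) = \frac{(-1)^{m_1} q^{-m_1 \beta_1 - \binom{m_1+1}{2}}}{(q^{-1};q^{-1})_{m_1}}.
\end{equation*}
Since $(q^{-1};q^{-1})_{m_1} \geq (q^{-1};q^{-1})_\infty > 0$, this coefficient is bounded in absolute value by $C_q\, q^{-m_1 \beta_1 - \binom{m_1+1}{2}}$, where $C_q := (q^{-1};q^{-1})_\infty^{-1}$.

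Multiplying the $z_1$-estimate by $E'$ and setting $m_1 = \la_1-\la_2$ delivers the claimed bound $|a_\beta(\la)| \leq E\, q^{-\beta_1 (\la_1-\la_2) - \binom{\la_1-\la_2+1}{2}}$ with $E := C_q E'$ depending only on $d$, $q$, and $\beta$. I do not anticipate any genuine obstacle in this argument: the only nontrivial input is Euler's identity, used to isolate the $z_1$-coefficient explicitly, and everything else reduces to elementary bookkeeping with the finite polynomial factors in $z_2,\ldots,z_d$.
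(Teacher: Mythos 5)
Your proof is correct and self-contained. The paper does not give its own argument here: it simply cites \cite[Lemma 8.1]{W0} (remarking that the hypothesis there that $q$ be prime is never used). Your proof therefore fills in what the paper delegates to the reference, and it does so along the natural line of attack: factor $H_{d,q,\beta}$ into univariate pieces $P_i^{(\beta)}(z_i)$, observe that for $i\geq 2$ each $P_i^{(\beta)}$ is a polynomial of degree exactly $\beta_{i-1}-\beta_i$ (yielding both the support condition $\la_2\leq\beta_1-\beta_d$ and an absolute bound on those coefficients depending only on $d,q,\beta$), and then handle the sole infinite factor $P_1^{(\beta)}$ exactly via Euler's $q$-exponential identity, which delivers the exact decay $q^{-m_1\beta_1-\binom{m_1+1}{2}}$ in $m_1=\la_1-\la_2$ up to the harmless constant $(q^{-1};q^{-1})_{m_1}^{-1}\leq(q^{-1};q^{-1})_\infty^{-1}$.

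One small point worth making explicit, since $\beta\in\Sig_d$ may have negative entries: for $i\geq 2$ the index set $\{j\}$ in $P_i^{(\beta)}$ may contain nonpositive integers, so the quantities $q^{-j}$ are not all $\leq 1$. Your bound is nonetheless fine, because the index set is finite and determined by $\beta$ alone, so the elementary symmetric polynomials in question are bounded by a constant depending on $d,q,\beta$ exactly as claimed — just not by something as simple as $2^{\beta_{i-1}-\beta_i}$. Similarly, $\beta_1$ may be negative in the $i=1$ factor, but Euler's identity applies verbatim and the bound reads correctly with $q^{-m_1\beta_1}$ possibly growing in $m_1$; this growth is exactly what \eqref{eq:a_la_bound} permits. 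So there is no gap, only a subtlety to keep in mind if one were tempted to sharpen the constant.
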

\begin{proof}
The bound \eqref{eq:a_la_bound} is shown in \cite[Lemma 8.1]{W0} (the statement there assumes $q$ is prime, but the proof does not use this).
\end{proof}

\begin{lemma}\label{thm:H_bound_and_triangularity}
Let $d$ be a positive integer, $\beta \in \Sig_d$, and $q \in \R_{>1}$ as before. For $\mu \in \bSig_d$ set 
\begin{align}\label{eq:Hqmu}
\begin{split}
\hH_{d,q,\beta}(\mu) &:=  H_{d,q,\beta}(q^{\mu_{1}}, q^{\mu_{1}+\mu_{2}},\ldots, q^{\mu_{1}+\cdots+\mu_{d}}) \\ 
&= (q^{-(\beta_1-\mu_1+1)};q^{-1})_\infty (q^{-(\beta_1+\beta_2-\mu_1-\mu_2+1)};q^{-1})_{\beta_1-\beta_2} \cdots (q^{-(|\beta|-|\mu|+1)};q^{-1})_{\beta_{d-1}-\beta_d}
\end{split}
\end{align}
where $H$ is as in \Cref{thm:H_from_melanie}. Then for all $\mu \in \bSig_d$, 
\begin{equation}\label{eq:H_bound}
0 \leq \hH_{d,q,\beta}(\mu) \leq 1
\end{equation}
and
\begin{equation}\label{eq:triangularity}
\text{$\hH_{d,q,\beta}(\mu)$ is nonzero if and only if $\mu \leq \beta$}
\end{equation}
where $\leq$ is the dominance order.
\end{lemma}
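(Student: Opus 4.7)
The plan is to work directly from the explicit $q$-Pochhammer product representation \eqref{eq:Hqmu}, analyzing the sign and magnitude of each factor as its parameters vary.

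First I would verify the second equality in \eqref{eq:Hqmu}. Substituting $z_\ell = q^{\mu_1+\cdots+\mu_\ell}$ into the definition of $H_{d,q,\beta}$ from \Cref{thm:H_from_melanie}, each factor $1-z_\ell/q^j$ becomes $1 - q^{(\mu_1+\cdots+\mu_\ell)-j}$. Reindexing by $i = j - (\mu_1+\cdots+\mu_\ell)$ collects these terms into $q^{-1}$-Pochhammer symbols: the first (infinite) product becomes $(q^{-(\beta_1-\mu_1+1)};q^{-1})_\infty$, and for $\ell \geq 2$ the product over $\beta_{\ell-1}-\beta_\ell$ terms starting at $j = \sum_{i=1}^\ell \beta_i+1$ becomes $(q^{-(\sum_{i=1}^\ell(\beta_i-\mu_i)+1)};q^{-1})_{\beta_{\ell-1}-\beta_\ell}$, matching the claim.

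For the bound $0 \leq \hH \leq 1$ together with the direction $\mu \leq \beta \Rightarrow \hH_{d,q,\beta}(\mu) \neq 0$, I note that each factor has the form $(q^{-m};q^{-1})_k = \prod_{i=0}^{k-1}(1-q^{-m-i})$. When $m \geq 1$, each individual term lies in $(0,1)$ since $q>1$, so the factor itself lies strictly in $(0,1]$; absolute convergence of the infinite first factor is immediate from $\sum_i q^{-i}<\infty$. The dominance hypothesis $\mu \leq \beta$ translates exactly into $m_\ell := \sum_{i=1}^\ell(\beta_i-\mu_i)+1 \geq 1$ for every $\ell$, so every factor sits in $(0,1]$, yielding $\hH \in (0,1]$. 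The extended-signature case where some $\mu_\ell = -\infty$ is handled via the convention $q^{-\infty}=0$, making the relevant factor $(0;q^{-1})_{k_\ell}=1$.

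The main obstacle is the converse direction $\hH_{d,q,\beta}(\mu) \neq 0 \Rightarrow \mu \leq \beta$. I would let $i$ be the smallest index of dominance violation and argue by cases. For $i=1$, we have $\beta_1-\mu_1+1 \leq 0$, so the infinite first factor contains the term $1-q^0=0$ (at index $j=\mu_1$) and hence vanishes. For $i \geq 2$, I aim to show the $i$-th factor vanishes by placing $m_i$ in the zero range $\{-(\beta_{i-1}-\beta_i)+1,\ldots,0\}$: the upper bound $m_i \leq 0$ is immediate from the violation at index $i$, while the lower bound should be obtained by combining the minimality of $i$ (which gives $\sum_{\ell=1}^{i-1}\mu_\ell \leq \sum_{\ell=1}^{i-1}\beta_\ell$) with the signature monotonicity $\mu_i \leq \mu_{i-1}$ and $\beta_i \leq \beta_{i-1}$ to deduce the pointwise bound $\mu_i \leq \beta_{i-1}$, whence $\sum_{\ell=1}^i \mu_\ell \leq \sum_{\ell=1}^{i-1}\beta_\ell + \beta_{i-1}$ as required. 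The genuine subtlety I anticipate is the boundary case $\beta_{i-1}=\beta_i$, where the $i$-th Pochhammer is an empty product equal to $1$; in that case the dominance violation at index $i$ must be propagated to the nearest strictly-decreasing step of $\beta$, and I expect the bulk of the combinatorial work to lie in this propagation step.
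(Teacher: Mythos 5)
Your verification of the product representation, the $\mu\leq\beta\Rightarrow\hH\neq 0$ direction, and the boundedness in that case are fine and match what the paper does. But the forward direction has a genuine gap, and in fact the claimed deduction $\mu_i\leq\beta_{i-1}$ is false. You argue: from the minimality of the violation index $i$ one has $\sum_{\ell=1}^{i-1}\mu_\ell\leq\sum_{\ell=1}^{i-1}\beta_\ell$, and combining with $\mu_i\leq\mu_{i-1}$ and $\beta_i\leq\beta_{i-1}$ ``to deduce the pointwise bound $\mu_i\leq\beta_{i-1}$.'' This inference is not valid: the cumulative inequality at level $i-1$ gives no control on the individual entry $\mu_{i-1}$, so there is nothing forcing $\mu_{i-1}\leq\beta_{i-1}$, hence nothing forcing $\mu_i\leq\beta_{i-1}$. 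Concretely, take $d=3$, $\beta=(3,1,0)$, $\mu=(2,2,2)$: the first violation is at $i=3$, but $\mu_3=2>1=\beta_2$, and one computes the three $q$-Pochhammer factors to be $(q^{-2};q^{-1})_\infty$, $(q^{-1};q^{-1})_2$, and $(q^{1};q^{-1})_1=1-q$, none of which vanish. So the lower bound you need on $m_i$ fails, and your argument does not close. (The case $\beta_{i-1}=\beta_i$ that you flag as ``the genuine subtlety'' is actually a red herring; the problem already occurs with $\beta_{i-1}>\beta_i$.)

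A comparison with the paper's own proof is instructive: the paper argues the forward direction for the first two factors and then says ``continuing like this,'' and the inductive step it is implicitly invoking is exactly the deduction $\mu_i\leq\beta_{i-1}$ that you wrote out, which works for $i=2$ (where $\beta_{i-1}=\beta_1$ and $\mu_2\leq\mu_1\leq\beta_1$ follows from the first factor) but not for $i\geq 3$. In fact the example above shows that the forward implication of \eqref{eq:triangularity} fails in dominance order, and as a by-product so does \eqref{eq:H_bound}: with $q>1$ the last factor $1-q$ is negative, so $\hH_{3,q,(3,1,0)}((2,2,2))<0$. What \emph{is} true, and what \cite[Lemma 8.1]{W0} actually proves (as the paper acknowledges in a footnote), is the version for the \emph{lexicographic} order: if $\mu\not\leq_{\mathrm{lex}}\beta$ and $i$ is the first index where they differ, then $\mu_j=\beta_j$ for $j<i$ forces $\mu_i\leq\mu_{i-1}=\beta_{i-1}$, so the $i$-th factor does vanish. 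The upshot is that your proposal reproduces the gap in the paper's sketch rather than filling it; to make any version of the forward direction rigorous you should work with the lexicographic order, and then one must separately check that the lexicographic version suffices for the downstream applications (\Cref{thm:H_invert}, \Cref{thm:prob_from_observables}).
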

\begin{proof}
For the forward direction of \eqref{eq:triangularity}, suppose $\hH_{d,q,\beta}(\mu) \neq 0$. Then each of the $q$-Pochhammer symbols in \eqref{eq:Hqmu} is nonzero. The nonvanishing of the first implies that $\mu_1 \leq \beta_1$. The nonvanishing of the second implies that either $\mu_1+\mu_2 \leq \beta_1+\beta_2$, or $\mu_1+\mu_2 > 2\beta_1$. However, $\mu_1+\mu_2 \leq 2\mu_1 \leq 2\beta_1$, so the second cannot occur, hence $\mu_1+\mu_2 \leq \beta_1+\beta_2$. Continuing like this, the nonvanishing of all $q$-Pochhammer symbols implies that $\sum_{j=1}^i \mu_j \leq \sum_{j=1}^i \beta_j$, i.e. $\mu \leq \beta$. Note that with our convention $-\infty+a=-\infty$, these steps are still valid if $\mu$ has some parts $-\infty$.

For the backward direction, suppose $\mu \leq \beta$. The $i\tth$ $q$-Pochhammer symbol in \eqref{eq:Hqmu} is nonzero if $\sum_{j=1}^i \beta_j - \mu_j \geq 0$, hence all are nonzero, so $\hH_{d,q,\beta}(\mu) \neq 0$.

To establish \eqref{eq:H_bound}, note that we have already established it when $\mu \not \leq \beta$ since the expression is $0$. This leaves the case $\mu \leq \beta$, and in this case $\sum_{j=1}^i \mu_j \leq \sum_{j=1}^i \beta_j$ for each $i$, from which it is easy to see that each $q$-Pochhammer symbol in \eqref{eq:Hqmu} lies between $0$ and $1$.
\end{proof}

The triangularity property \eqref{eq:triangularity} is a slightly stronger version of a similar statement in \cite[Lemma 8.1]{W0}. It implies that in principle, for any fixed $\nu \in \Sig_d$ one may write the indicator $\mu \mapsto \bbone(\mu=\nu)$ as some infinite linear combination of the functions $\mu \mapsto \hH_{d,q,\beta}(\mu)$ on $\Sig_d$ for different $\beta$. \Cref{thm:H_invert} gives this linear combinations explicitly, and \Cref{thm:H_to_cdf} gives a different formula for the function $\mu \mapsto \bbone(\mu \leq \nu)$. Each of these results is essentially just an extension to signatures of the following result for integers, which is an application of the $q$-binomial theorem.

\begin{lemma}\label{thm:int_ind_and_cdf}
For any $n \in \Z,m\in \bZ$,
\begin{equation}
\label{eq:int_H_invert}
\frac{1}{(q^{-1};q^{-1})_\infty^2} \sum_{b \in \Z} (-1)^{n-b}q^{-\binom{n-b}{2}} (q^{-(n-b+1)};q^{-1})_\infty (q^{-(b-m+1)};q^{-1})_\infty = \bbone(n=m)
\end{equation}
and 
\begin{equation}
\label{eq:int_H_cdf}
\frac{1}{(q^{-1};q^{-1})_\infty^2} \sum_{b \in \Z} (-1)^{n-b}q^{-\binom{n-b+1}{2}} (q^{-(n-b+1)};q^{-1})_\infty (q^{-(b-m+1)};q^{-1})_\infty = \bbone(m \leq n)
\end{equation}
where we take the usual convention $q^{-\infty}:=0$, $-\infty < k$ for each $k \in \Z$, and $-\infty+k=-\infty$.
\end{lemma}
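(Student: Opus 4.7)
The plan is to reduce both identities to the classical $q$-binomial theorem after observing that the sums have essentially finite support. First I would analyze where the Pochhammer factors vanish: since $(q^{-(k+1)};q^{-1})_\infty = \prod_{j \geq k+1}(1-q^{-j})$ vanishes precisely when the product contains the factor $1-q^0 = 0$, i.e. when $k \leq -1$, the factor $(q^{-(n-b+1)};q^{-1})_\infty$ is nonzero exactly for $b \leq n$, and (when $m \in \Z$) $(q^{-(b-m+1)};q^{-1})_\infty$ is nonzero exactly for $b \geq m$. In particular, if $m \in \Z$ with $m > n$, every term vanishes, matching both right-hand sides.

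Next I would treat the main case $m \in \Z$ with $m \leq n$. Substituting $c = n-b$ and $N = n-m \geq 0$, and using the factorization $(q^{-(c+1)};q^{-1})_\infty = (q^{-1};q^{-1})_\infty / (q^{-1};q^{-1})_c$, the left-hand side of \eqref{eq:int_H_invert} becomes
\begin{equation*}
\frac{1}{(q^{-1};q^{-1})_N}\sum_{c=0}^{N} (-1)^c q^{-\binom{c}{2}} \sqbinom{N}{c}_{q^{-1}}.
\end{equation*}
By the $q$-binomial theorem in the form $\sum_{c=0}^N (-z)^c q^{-\binom{c}{2}} \sqbinom{N}{c}_{q^{-1}} = (z;q^{-1})_N$ at $z=1$, the inner sum equals $(1;q^{-1})_N$, which vanishes for $N \geq 1$ (from the $i=0$ factor) and equals $1$ when $N=0$; this produces $\bbone(n=m)$. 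For \eqref{eq:int_H_cdf}, the identity $\binom{n-b+1}{2} = \binom{n-b}{2} + (n-b)$ inserts an extra factor $q^{-c}$ in the sum, so the same reduction calls for $z = q^{-1}$, giving inner sum $(q^{-1};q^{-1})_N$, which cancels the denominator and yields $1 = \bbone(m \leq n)$.

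Finally I would handle the boundary case $m = -\infty$. Here $(q^{-(b-m+1)};q^{-1})_\infty = (0;q^{-1})_\infty = 1$ under the convention $q^{-\infty} = 0$, so the sums become absolutely convergent series over $c = n-b \geq 0$ of the form
\begin{equation*}
(q^{-1};q^{-1})_\infty \sum_{c \geq 0} \frac{(-z)^c q^{-\binom{c}{2}}}{(q^{-1};q^{-1})_c} \;=\; (q^{-1};q^{-1})_\infty \,(z;q^{-1})_\infty,
\end{equation*}
with $z=1$ for \eqref{eq:int_H_invert} and $z=q^{-1}$ for \eqref{eq:int_H_cdf} (the latter absorbing the extra $q^{-c}$); this is a direct specialization of the $q$-exponential identity. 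After dividing by $(q^{-1};q^{-1})_\infty^2$, one obtains $(1;q^{-1})_\infty / (q^{-1};q^{-1})_\infty = 0 = \bbone(n = -\infty)$ and $(q^{-1};q^{-1})_\infty / (q^{-1};q^{-1})_\infty = 1 = \bbone(-\infty \leq n)$ respectively.

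The proof is essentially bookkeeping: the only real point to verify is that the exponents of $q$ track correctly through the substitution $c = n-b$, particularly the shift $\binom{n-b+1}{2} - \binom{n-b}{2} = n-b$ which is exactly what converts the specialization $z=1$ (producing an indicator at $N=0$) into $z = q^{-1}$ (producing the CDF indicator $N \geq 0$). I do not anticipate any significant obstacle.
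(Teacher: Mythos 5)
Your proof is correct and takes essentially the same approach as the paper: identify that the Pochhammer factors restrict the support to $m \le b \le n$, rewrite them as $(q^{-1};q^{-1})_\infty/(q^{-1};q^{-1})_k$, and apply the finite $q$-binomial theorem at $z=1$ (resp.\ $z=q^{-1}$). Your separate treatment of $m=-\infty$ via the infinite $q$-exponential identity is a slightly more explicit version of the paper's parenthetical remark that the $q$-binomial formula still makes sense under the stated conventions; the underlying calculation is the same.
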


\begin{proof}
First note that for $k \in \Z$,
\begin{equation}
(q^{-k};q^{-1})_\infty = \begin{cases} 0 & k \leq 0 \\ 
\frac{(q^{-1};q^{-1})_\infty}{(q^{-1};q^{-1})_{k-1}} & k > 0.
\end{cases}
\end{equation}
Hence the only nonzero terms in the sum in \eqref{eq:int_H_invert} comes when $m \leq b \leq n$, so if $n < m$ the sum is $0$ and equality holds. If $n \geq m$, the sum becomes 
\begin{equation}
\sum_{\substack{b \in \Z \\ m \leq  b \leq n}} \frac{(-1)^{n-b}q^{-\binom{n-b}{2}}}{(q^{-1};q^{-1})_{n-b}(q^{-1};q^{-1})_{b-m}}  = \frac{1}{(q^{-1};q^{-1})_{n-m}} (1;q^{-1})_{n-m} = \bbone(n=m)
\end{equation}
by the $q$-binomial theorem (this formula still makes sense when $m=-\infty$ with our conventions).

Similarly to \eqref{eq:int_H_invert}, the left hand side of \eqref{eq:int_H_cdf} has all terms $0$ if $n < m$, which checks that case of the equality. If $n \geq m$ it is 
\begin{equation}
\sum_{\substack{b \in \Z \\ m \leq  b \leq n}} \frac{(-1)^{n-b}q^{-\binom{n-b+1}{2}}}{(q^{-1};q^{-1})_{n-b}(q^{-1};q^{-1})_{b-m}}  = \frac{1}{(q^{-1};q^{-1})_{n-m}} (q^{-1};q^{-1})_{n-m} = 1,
\end{equation}
again by the $q$-binomial theorem, and again this makes sense when $m=-\infty$.
\end{proof}

The next two results, \Cref{thm:H_invert} and \Cref{thm:H_to_cdf}, upgrade the two parts of \Cref{thm:int_ind_and_cdf} from integers to signatures. We remark that each part of \Cref{thm:int_ind_and_cdf} can be proven from the other by a calculation (the right hand sides of \eqref{eq:int_H_invert} and \eqref{eq:int_H_cdf} are related by summation/M\"obius inversion, and one just has to calculate how this changes the coefficients on the left hand sides). The same is true of deriving the next two results from one another, but we find it easier to prove them independently as above.

\begin{lemma}\label{thm:H_invert}
For any $\nu \in \Sig_d, \mu \in \bSig_d$ and $\hH_{d,q,\beta}$ as in \Cref{thm:H_bound_and_triangularity}, we have
\begin{equation}
\label{eq:H_invert}
\sum_{\substack{\beta \in \Sig_d: \\ \beta \leq \nu}} \prod_{i=1}^d \frac{(-1)^{\sum_{j=1}^i \nu_j - \beta_j} q^{-\binom{\sum_{j=1}^i \nu_j - \beta_j}{2}}(q^{-(\beta_{i-1}-\beta_i-\sum_{j=1}^i \nu_j - \beta_j)-1};q^{-1})_\infty}{(q^{-1};q^{-1})_\infty (q^{-1};q^{-1})_{\sum_{j=1}^i \nu_j - \beta_j}}\hH_{d,q,\beta}(\mu) = \bbone(\mu=\nu),
\end{equation}
where for the $i=1$ term in the product we take the convention that $\beta_0 + \nu_1 = \infty$ for any $\nu_1 \in \Z$. Here we note that $\mu$ may be an extended signature while $\nu$ may not, and adopt the usual convention $q^{-\infty} := 0$.
\end{lemma}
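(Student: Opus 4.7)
My plan is induction on $d$, summing over $\beta_d$ (equivalently, the partial sum $B_d := \sum_{j=1}^d \beta_j$) first and reducing to the $(d-1)$-dimensional version of the identity. The base case $d=1$ follows immediately from \Cref{thm:int_ind_and_cdf} applied with $n = \nu_1$ and $m = \mu_1$, after using the equivalence $(q^{-(n-b+1)};q^{-1})_\infty = (q^{-1};q^{-1})_\infty/(q^{-1};q^{-1})_{n-b}$ for $b \leq n$ to match the two forms of the coefficient.

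For the inductive step, I fix $\beta_1,\ldots,\beta_{d-1}$ and sum over $\beta_d$. The key algebraic observation is that after writing the finite Pochhammer in the $d$-th factor of $\hH_{d,q,\beta}(\mu)$ as a ratio
\[
(q^{-(B_d-M_d+1)};q^{-1})_{\beta_{d-1}-\beta_d} = \frac{(q^{-(B_d-M_d+1)};q^{-1})_\infty}{(q^{-(B_{d-1}+\beta_{d-1}-M_d+1)};q^{-1})_\infty},
\]
all $\beta_d$-dependence outside the factor $(-1)^{N_d-B_d}q^{-\binom{N_d-B_d}{2}}(q^{-(B_d-M_d+1)};q^{-1})_\infty/((q^{-1};q^{-1})_\infty(q^{-1};q^{-1})_{N_d-B_d})$ cancels telescopically. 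Indeed, using $B_d-\beta_d = B_{d-1}$, the coefficient Pochhammer $(q^{-(\beta_{d-1}-\beta_d-(N_d-B_d))-1};q^{-1})_\infty$ simplifies to $(q^{-(B_{d-1}+\beta_{d-1}-N_d)-1};q^{-1})_\infty$, and the denominator above is manifestly $\beta_d$-free. Summing over $B_d\in\Z$ by \Cref{thm:int_ind_and_cdf} with $n=N_d$, $m=M_d$ yields $\bbone(N_d=M_d)$ multiplied by the residual ratio $(q^{-(B_{d-1}+\beta_{d-1}-N_d)-1};q^{-1})_\infty/(q^{-(B_{d-1}+\beta_{d-1}-M_d+1)};q^{-1})_\infty$, which equals $1$ precisely when $N_d=M_d$. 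What remains is the $(d-1)$-dimensional LHS in $(\beta_1,\ldots,\beta_{d-1})$ versus $(\nu_1,\ldots,\nu_{d-1})$ versus $(\mu_1,\ldots,\mu_{d-1})$, and induction closes the argument because $N_d=M_d$ combined with $(\mu_1,\ldots,\mu_{d-1})=(\nu_1,\ldots,\nu_{d-1})$ forces $\mu=\nu$.

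The main obstacle is that \Cref{thm:int_ind_and_cdf} sums over all $B_d\in\Z$, whereas the original sum is constrained to $\beta_d\leq\beta_{d-1}$ (i.e.\ $B_d\leq B_{d-1}+\beta_{d-1}$) and $B_d\leq N_d$. The dominance constraint is enforced automatically by the convention $1/(q^{-1};q^{-1})_{N_d-B_d}=0$ for $B_d>N_d$. The signature constraint requires a separate justification, which I would supply by a uniform vanishing argument: whenever $\beta_{d-1}+B_{d-1}<N_d$ (the only case in which the signature constraint is strictly tighter than dominance), the $\beta_d$-independent exponent $-(\beta_{d-1}+B_{d-1}-N_d)-1 = N_d-\beta_{d-1}-B_{d-1}-1$ of the coefficient Pochhammer is nonnegative, so that Pochhammer contains a factor $(1-q^0)=0$ and the entire $d$-th factor is zero for every $B_d$. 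Hence both the constrained and unconstrained sums are zero in this regime. The identical mechanism kills contributions from invalid signatures with $\beta_{d-1}<\beta_d$, allowing the sum to be extended freely from $\Sig_d$ to $\Z^d$. The convention $\beta_0+\nu_1=\infty$ makes the $i=1$ coefficient Pochhammer equal to $1$, and extended signatures with $\mu_i=-\infty$ are handled by the $m=-\infty$ case of \Cref{thm:int_ind_and_cdf}, which correctly returns zero.
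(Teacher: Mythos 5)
Your proof is correct and follows essentially the same route as the paper: induct on $d$ by summing first over $\beta_d$, rewrite the $d$-th factor of $\hH_{d,q,\beta}$ as a Pochhammer ratio so that the $\beta_d$-dependence isolates into precisely the summand of \Cref{thm:int_ind_and_cdf}, and conclude $\bbone(\sum_{j\le d}\mu_j=\sum_{j\le d}\nu_j)$. Your explicit vanishing argument for why the signature constraint $\beta_d\le\beta_{d-1}$ may be dropped (via the $\beta_d$-independent coefficient Pochhammer killing the whole inner sum when $\beta_{d-1}+B_{d-1}<N_d$) is a point the paper's proof leaves implicit when passing to \eqref{eq:split_beta_sum}, and is a welcome clarification.
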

\begin{proof}
The case $d=1$ is exactly \eqref{eq:int_H_invert}, taking $\nu = (n)$ and $\mu = (m)$. For the general case we induct using the same argument. We claim that for any $d \geq 2$,
\begin{multline}
\label{eq:induct_H_invert}
\text{LHS\eqref{eq:H_invert}} = \sum_{\substack{\tbeta \in \Sig_{d-1}: \\ \tbeta \leq (\nu_1,\ldots,\nu_{d-1}) \\ \tbeta \geq (\mu_1,\ldots,\mu_{d-1})}} \prod_{i=1}^{d-1} \frac{(-1)^{\sum_{j=1}^i \nu_j - \tbeta_j} q^{-\binom{\sum_{j=1}^i \nu_j - \tbeta_j}{2}}(q^{-(\tbeta_{i-1}-\tbeta_i-\sum_{j=1}^i \nu_j - \tbeta_j)-1};q^{-1})_\infty}{(q^{-1};q^{-1})_\infty (q^{-1};q^{-1})_{\sum_{j=1}^i \nu_j - \tbeta_j}} \\ 
\times \hH_{d-1,q,\tbeta}((\mu_1,\ldots,\mu_{d-1}))\bbone(\sum_{j=1}^d \mu_j = \sum_{j=1}^d \nu_j).
\end{multline}
First note that by the triangularity property of $\hH_{d,q,\beta}$, the sum on the left hand side of \eqref{eq:H_invert} may be taken to be over $\beta$ for which $\beta \geq \mu$, as all other terms are $0$. This in particular implies that
\begin{equation}\label{eq:betasize}
|\mu| \leq |\beta| \leq |\nu|
\end{equation}
for all $\beta$ which are summed over. Additionally, the summands on the left hand side of \eqref{eq:H_invert} are bounded above by $(q^{-1};q^{-1})_\infty^{-2d}q^{-\binom{\sum_{j=1}^i \nu_j - \beta_j}{2}}$ by \eqref{eq:H_bound} in \Cref{thm:H_bound_and_triangularity} and the trivial bound $(q^{-1};q^{-1})_n \geq (q^{-1};q^{-1})_\infty$. Hence the sum converges absolutely, so we may reorder terms however desired. Splitting the sum over $\beta$ into a sum over $(\beta_1,\ldots,\beta_{d-1}) =:\tbeta$ and $\beta_d =:b$ and taking into account the restriction on $b$ given by \eqref{eq:betasize}, we have
\begin{multline}\label{eq:split_beta_sum}
\text{LHS\eqref{eq:H_invert}} = \sum_{\substack{\tbeta \in \Sig_{d-1}: \\\tbeta \leq (\nu_1,\ldots,\nu_{d-1})  \\ \tbeta \geq (\mu_1,\ldots,\mu_{d-1})}} \prod_{i=1}^{d-1} \frac{(-1)^{\sum_{j=1}^i \nu_j - \tbeta_j} q^{-\binom{\sum_{j=1}^i \nu_j - \tbeta_j}{2}}(q^{-(\tbeta_{i-1}-\tbeta_i-\sum_{j=1}^i \nu_j - \tbeta_j)-1};q^{-1})_\infty}{(q^{-1};q^{-1})_\infty (q^{-1};q^{-1})_{\sum_{j=1}^i \nu_j - \tbeta_j}}  \\ 
\times  \hH_{d-1,q,\tbeta}((\mu_1,\ldots,\mu_{d-1})) \sum_{\substack{b \in \Z: \\ b \leq |\nu|-|\tbeta|, \\ b \geq |\mu| - |\tbeta|} } F_{\tbeta}(b),
\end{multline}
where 
\begin{equation*}
  F_{\tbeta}(b)  =
 \frac{(-1)^{\nu_d-b+\sum_{j=1}^{d-1} \nu_j - \tbeta_j}  (q^{-(\tbeta_{d-1}-\nu_d-\sum_{j=1}^{d-1} \nu_j - \tbeta_j)-1};q^{-1})_\infty}{q^{\binom{\nu_d-b+\sum_{j=1}^{d-1} \nu_j - \tbeta_j}{2}}(q^{-1};q^{-1})_\infty (q^{-1};q^{-1})_{\nu_d - b + \sum_{j=1}^{d-1} \nu_j - \tbeta_j}} (q^{|\mu|-|\tbeta|-b-1};q^{-1})_{\tbeta_{d-1}-b}.
\end{equation*}
Since $\tbeta+b - |\mu| \geq 0$, we may write
\begin{equation}
(q^{|\mu|-|\tbeta|-b-1};q^{-1})_{\tbeta_{d-1}-b} = \frac{(q^{-1};q^{-1})_{-|\mu|+|\tbeta|+\tbeta_{d-1}}}{(q^{-1};q^{-1})_{-|\mu|+|\tbeta|+b}}.
\end{equation}
Now the same $q$-binomial trick as for the $d=1$ case yields for the second sum in \eqref{eq:split_beta_sum} that
\begin{align*}\label{eq:use_qbinom_trick}
\begin{split}
\sum_{b = |\mu|-|\tbeta|}^{|\nu|-|\tbeta|} F_{\tbeta}(b) &= \frac{(q^{-1};q^{-1})_{-|\mu|+|\tbeta|+\tbeta_{d-1}}(q^{-(\tbeta_{d-1}-\nu_d-\sum_{j=1}^{d-1} \nu_j - \tbeta_j)-1};q^{-1})_\infty}{(q^{-1};q^{-1})_\infty} \\ 
&\quad \quad \quad \quad \quad \quad \quad \quad \quad \times \sum_{b = |\mu|-|\tbeta|}^{|\nu|-|\tbeta|}  \frac{(-1)^{|\nu| - |\tbeta| - b} q^{-\binom{|\nu|-|\tbeta| - b}{2}}}{(q^{-1};q^{-1})_{b-(|\mu|-|\tbeta|)}(q^{-1};q^{-1})_{|\nu| - |\tbeta| - b} } \\ 
  &= \frac{(q^{-1};q^{-1})_{-|\mu|+|\tbeta|+\tbeta_{d-1}}(q^{-(\tbeta_{d-1}-\nu_d-\sum_{j=1}^{d-1} \nu_j - \tbeta_j)-1};q^{-1})_\infty}{(q^{-1};q^{-1})_\infty} \bbone(\sum_{j=1}^d \mu_j = \sum_{j=1}^d \nu_j).
\end{split}
\end{align*}
When $|\mu|=|\nu|$, 
\begin{equation*}
    (q^{-1};q^{-1})_{-|\mu|+|\tbeta|+\tbeta_{d-1}}(q^{-(\tbeta_{d-1}-\nu_d-\sum_{j=1}^{d-1} \nu_j - \tbeta_j)-1};q^{-1})_\infty = (q^{-1};q^{-1})_\infty,
  \end{equation*} 
so the above simplifies to $\bbone(\sum_{j=1}^d \mu_j = \sum_{j=1}^d \nu_j)$. This shows \eqref{eq:induct_H_invert}. 

Applying \eqref{eq:induct_H_invert} iteratively yields that
\begin{equation}
\text{RHS\eqref{eq:H_invert}} = \prod_{i=1}^d \bbone(\sum_{j=1}^i \mu_j = \sum_{j=1}^i \nu_j) = \bbone(\mu=\nu),
\end{equation}
completing the proof.
\end{proof}

The next two results, \Cref{thm:prob_from_observables} and \Cref{thm:observables_suffice}, show that probabilities may be extracted from moments. The reason we have broken this into two results is that the probability of a signature $\nu \in \Sig_d$ has an explicit expression in terms of moments given in \Cref{thm:prob_from_observables}, while for an extended signature $\nu \in \bSig_d \setminus \Sig_d$ this formula does not hold, and we simply show that the moments determine the probability without giving an explicit formula (\Cref{thm:observables_suffice}). 

\begin{prop}\label{thm:prob_from_observables}
Fix a positive integer $d$ and real $q>1$. Let $\{x_\mu: \mu \in \bSig_d\}$ be a set of nonnegative real numbers such that the sums
\begin{equation}
C_\la := \sum_{\mu \in \bSig_d} x_\mu q^{\sum_{i=1}^d \la_i \mu_i}
\end{equation}
are convergent for every $\la \in \Sig_d^{\geq 0}$, and $\{C_\la: \la \in \Sig_d^{\geq 0}\}$ is nicely-behaved with respect to $q$. Then for any $\nu \in \Sig_d$,
\begin{equation}\label{eq:prob_from_observables}
x_\nu = \sum_{\substack{\beta \in \Sig_d: \\ \beta \leq \nu}} \prod_{i=1}^d \frac{(-1)^{\sum_{j=1}^i \nu_j - \beta_j} q^{-\binom{\sum_{j=1}^i \nu_j - \beta_j}{2}}}{(q^{-1};q^{-1})_{\sum_{j=1}^i \nu_j - \beta_j}(q^{-1};q^{-1})_{\beta_{i-1}-\beta_i-\sum_{j=1}^i \nu_j - \beta_j}} \sum_{\substack{\la \in \Sig_d^{\geq 0}: \\ \la_2 \leq \beta_1-\beta_d}} a_\beta(\la) C_\la
\end{equation}
where $a_\beta(\la)$ is as defined in \Cref{thm:H_from_melanie}. Note \eqref{eq:prob_from_observables} is not valid in general for $\nu \in \bSig_d$.
\end{prop}

\begin{proof}[Proof of \Cref{thm:prob_from_observables}]
First consider a fixed $\beta \in \Sig_d$ with $\beta \leq \nu$. With $a_\beta(\la)$ defined as in \Cref{thm:H_from_melanie}, we have
\begin{equation}\label{eq:H_expand}
\hH_{d,q,\beta}(\mu) = \sum_{\substack{\la \in \Sig_d^{\geq 0}: \\ \la_2 \leq \beta_1-\beta_d}} a_\beta(\la) q^{\sum_{i=1}^d \la_i \mu_i}.
\end{equation}
We wish to show that the sum
\begin{equation}\label{eq:x_mu_abs_conv0}
\sum_{\mu \in \bSig_d}  \sum_{\substack{\la \in \Sig_d^{\geq 0}: \\ \la_2 \leq \beta_1-\beta_d}} a_\beta(\la) q^{\sum_{i=1}^d \la_i \mu_i} x_\mu
\end{equation}
converges absolutely. First, by rewriting as 
\begin{equation}\label{eq:x_mu_abs_conv}
\sum_{\mu \in \bSig_d}  \sum_{\substack{(\la_2,\ldots,\la_d) \in \Y_{d-1}: \\ \la_2 \leq \beta_1-\beta_d}} \sum_{\la_1 \geq \la_2} a_\beta(\la) q^{\sum_{i=1}^d \la_i \mu_i} x_\mu,
\end{equation}
the sum over $\la_2,\ldots,\la_d$ is finite and may be commuted past the sum over $\mu$, so it suffices to show that for all $\beta \in \Sig_d$ and all choices of $\la_2,\ldots,\la_d$ summed over in \eqref{eq:x_mu_abs_conv}, the sum
\begin{equation}\label{eq:abs_cvg_la1}
\sum_{\mu \in \bSig_d} \sum_{\substack{\la_1: \\ \la_1 \geq \la_2}} a_\beta(\la) q^{\sum_{i=1}^d \la_i \mu_i} x_\mu
\end{equation}
converges absolutely. By the Fubini-Tonelli theorem and nonnegativity of the $x_\mu$,
\begin{align}
  \begin{split}
    \sum_{\mu \in \bSig_d} \sum_{\substack{\la_1: \\ \la_1 \geq \la_2}} |a_\beta(\la) q^{\sum_{i=1}^d \la_i \mu_i} x_\mu| &=  \sum_{\substack{\la_1: \\ \la_1 \geq \la_2}} |a_\beta(\la)| \sum_{\mu \in \bSig_d}q^{\sum_{i=1}^d \la_i \mu_i} x_\mu \\ 
    &= \sum_{\substack{\la_1: \\ \la_1 \geq \la_2}} |a_\beta(\la)| C_\la.
  \end{split}
\end{align}
Hence by the bound \Cref{thm:H_from_melanie} on $|a_\beta(\la)|$, it suffices to show 
\begin{equation}
\label{eq:abs_sum_less_infty}
\sum_{\substack{\la_1: \\ \la_1 \geq \la_2}} E q^{-\beta_{1} (\la_1-\la_2)-\binom{\la_1-\la_2+1}{2}} |C_\la| < \infty,
\end{equation}
which follows straightforwardly from the nicely-behaved hypothesis (see \Cref{def:nicely_behaved}). Hence the sum \eqref{eq:x_mu_abs_conv0} converges absolutely, so by Fubini's theorem the sums over $\mu$ and $\la$ may be exchanged. This together with \eqref{eq:H_expand} and the definition of $C_\la$ yields
\begin{equation}\label{eq:H_and_C}
\sum_{\mu \in \bSig_d}  x_\mu \hH_{d,q,\beta}(\mu) = \sum_{\mu \in \bSig_d}  x_\mu \sum_{\substack{\la \in \Sig_d^{\geq 0}: \\ \la_2 \leq \beta_1-\beta_d}} a_\beta(\la) q^{\sum_{i=1}^d \la_i \mu_i} = \sum_{\substack{\la \in \Sig_d^{\geq 0}: \\ \la_2 \leq \beta_1-\beta_d}} a_\beta(\la) C_\la.
\end{equation}

We now consider the sum over $\beta$, rather than fixing $\beta$ as above, and show \eqref{eq:prob_from_observables}. By \eqref{eq:H_bound} in \Cref{thm:H_bound_and_triangularity}, and the trivial bound
\begin{equation}
\frac{1}{(q^{-1};q^{-1})_k} \leq \frac{1}{(q^{-1};q^{-1})_\infty},
\end{equation}
we have
\begin{align}\label{eq:bound_xmu_sum}
\begin{split}
&\sum_{\substack{\beta \in \Sig_d: \\ \beta \leq \nu}}\sum_{\mu \in \bSig_d}  \abs*{ \prod_{i=1}^d \frac{(-1)^{\sum_{j=1}^i \nu_j - \beta_j} q^{-\binom{\sum_{j=1}^i \nu_j - \beta_j}{2}}}{(q^{-1};q^{-1})_{\sum_{j=1}^i \nu_j - \beta_j}(q^{-1};q^{-1})_{\beta_{i-1}-\beta_i-\sum_{j=1}^i \nu_j - \beta_j}}  \hH_{d,q,\beta}(\mu)x_\mu} \\ 
&\leq \sum_{\substack{\beta \in \Sig_d: \\ \beta \leq \nu}}\sum_{\mu \in \bSig_d} \frac{1}{(q^{-1};q^{-1})_\infty^{2d}} \prod_{i=1}^d q^{-\binom{\sum_{j=1}^i \nu_j - \beta_j}{2}} x_\mu  \\ 
& = \frac{1}{(q^{-1};q^{-1})_\infty^{2d}} \left(\sum_{\substack{\beta \in \Sig_d: \\ \beta \leq \nu}} q^{-\sum_{i=1}^d \binom{\sum_{j=1}^i \nu_j - \beta_j}{2}}\right) \left(\sum_{\mu \in \bSig_d} x_\mu\right) < \infty
\end{split}
\end{align}
since the finiteness of $C_{(0,\ldots,0)}$ implies summability of the $x_\mu$. Hence by Fubini's theorem, the two sums in \eqref{eq:prob_from_observables} can be interchanged, so
\begin{multline}
\label{eq:H_sum_swap}
\text{RHS\eqref{eq:prob_from_observables}} = \sum_{\mu \in \bSig_d} x_\mu \sum_{\substack{\beta \in \Sig_d: \\ \beta \leq \nu}} \prod_{i=1}^d \frac{(-1)^{\sum_{j=1}^i \nu_j - \beta_j} q^{-\binom{\sum_{j=1}^i \nu_j - \beta_j}{2}}}{(q^{-1};q^{-1})_{\sum_{j=1}^i \nu_j - \beta_j}(q^{-1};q^{-1})_{\beta_{i-1}-\beta_i+\sum_{j=1}^i \nu_j - \beta_j}} \\ \times H_{d,q,\beta}(q^{\mu_1},\ldots,q^{\mu_1+\ldots+\mu_d}).
\end{multline}
The proof of \eqref{eq:prob_from_observables} now follows from \eqref{eq:H_sum_swap} together with \Cref{thm:H_invert}. 
\end{proof}

\begin{prop}\label{thm:observables_suffice}
Fix a positive integer $d$ and real $q>1$. Let $\{x_\mu: \mu \in \bSig_d\}$ and $\{y_\mu: \mu \in \bSig_d\}$ be two sets of nonnegative real numbers such that for each $\la \in \Sig_d^{\geq 0}$, the sums 
\begin{equation}\label{eq:x_y_moments}
\sum_{\mu \in \bSig_d} x_\mu q^{\sum_{i=1}^d \la_i \mu_i} = \sum_{\mu \in \bSig_d} y_\mu q^{\sum_{i=1}^d \la_i \mu_i} =: C_\la
\end{equation}
converge and are equal, and the collection $\{C_\la: \la \in \Sig_d^{\geq 0}\}$ is nicely-behaved with respect to $q$ in the sense of \Cref{def:nicely_behaved}. Then $x_\mu=y_\mu$ for all $\mu \in \bSig_d$.
\end{prop}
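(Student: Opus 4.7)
Since \Cref{thm:prob_from_observables} already gives the conclusion for $\nu \in \Sig_d$ (all entries finite), the task is to handle extended signatures with $-\infty$ entries. The plan is to induct on $d$, reducing such a $\nu$ to a signature in $\bSig_{d-1}$ by marginalizing over the last coordinate and invoking the induction hypothesis.

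For the base case $d = 1$, \Cref{thm:prob_from_observables} handles $\nu \in \Z$; the remaining case $\nu = -\infty$ follows from the equality of total masses $x_{-\infty} + \sum_{n \in \Z} x_n = C_{(0)} = y_{-\infty} + \sum_{n \in \Z} y_n$ together with cancellation of the absolutely convergent finite-entry tails. For the inductive step, when $\nu \in \bSig_d \setminus \Sig_d$ I write $\nu = (\nu_1, \ldots, \nu_k, -\infty, \ldots, -\infty)$ with $0 \leq k \leq d-1$ and $\nu_1 \geq \ldots \geq \nu_k$ finite, and define the marginals
\[
\tilde x_{(\mu_1, \ldots, \mu_{d-1})} := \sum_{\mu_d \leq \mu_{d-1}} x_{(\mu_1, \ldots, \mu_d)}, \qquad \tilde y \text{ analogously},
\]
on $\bSig_{d-1}$, which are absolutely summable. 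A direct check using the convention $0 \cdot (-\infty) = 0$ in the exponent shows that the moments of $\tilde x, \tilde y$ at $\tilde \la \in \Sig_{d-1}^{\geq 0}$ coincide with $C_{(\tilde \la_1, \ldots, \tilde \la_{d-1}, 0)}$, and restricting the nicely-behaved bound to $\la_d = 0$ shows these marginal moments are themselves nicely-behaved with respect to $q$. The induction hypothesis then yields $\tilde x = \tilde y$ pointwise on $\bSig_{d-1}$.

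Setting $\nu' := (\nu_1, \ldots, \nu_{d-1})$, there are two subcases. When $k \leq d-2$ the entry $\nu'_{d-1} = -\infty$ forces $\mu_d = -\infty$ in the defining sum for $\tilde x_{\nu'}$, which collapses to the single term $x_\nu$, immediately giving $x_\nu = \tilde x_{\nu'} = \tilde y_{\nu'} = y_\nu$. When $k = d-1$, I have $\nu' \in \Sig_{d-1}$ and
\[
\tilde x_{\nu'} \;=\; x_\nu \;+\; \sum_{\mu_d \in \Z,\; \mu_d \leq \nu_{d-1}} x_{(\nu', \mu_d)},
\]
with an analogous identity for $\tilde y$; each term $(\nu', \mu_d)$ with $\mu_d \in \Z$ is a finite signature, so \Cref{thm:prob_from_observables} gives $x_{(\nu', \mu_d)} = y_{(\nu', \mu_d)}$, and absolute summability permits subtracting the two infinite tails to conclude $x_\nu = y_\nu$. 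The only real subtlety is this boundary subcase $k = d-1$, where the marginal identity does not isolate $x_\nu$ directly; the crucial point is that \Cref{thm:prob_from_observables} independently pins down every finite-signature value on which that identity depends, so absolute summability closes the loop. The remaining steps are bookkeeping with the extended-signature conventions and the stability of the nicely-behaved property under restriction to $\la_d = 0$.
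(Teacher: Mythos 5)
Your proof is correct. Both you and the paper proceed by marginalizing and then subtracting, relying on absolute summability to justify the subtraction and on the observation that the marginal moments $C_{(\la_1,\ldots,\la_{d-\ell},0,\ldots,0)}$ inherit the nicely-behaved property from $C_\la$ by setting trailing $\la_i=0$. The difference is in the induction structure: the paper fixes $d$ and inducts on the number $\ell$ of trailing $-\infty$ entries, marginalizing over the last $\ell$ coordinates all at once and applying only \Cref{thm:prob_from_observables} (in dimension $d-\ell$, on finite signatures) plus the inductive hypothesis for smaller $\ell$ to cover the subtracted terms; you instead induct on $d$, marginalize only the last coordinate, and invoke the full statement of the proposition for $\bSig_{d-1}$ as the inductive hypothesis, then split into a clean two-case analysis (whether $\nu_{d-1}$ is $-\infty$ or finite). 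Your route buys a lighter marginalization and a shorter case analysis per step, at the cost of needing the full inductive hypothesis in dimension $d-1$ rather than just \Cref{thm:prob_from_observables}; the paper's route keeps $d$ fixed throughout and appeals to the weaker inversion lemma on each marginal. Both are valid, and structurally closely related.
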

\begin{proof}

We show by induction on $\ell$ that for each $\ell = 0, \ldots,d$ and $(\nu_1,\ldots,\nu_{d-\ell}) \in \Sig_{d-\ell}$,
\begin{equation}
x_{(\nu_1,\ldots,\nu_{d-\ell},-\infty,\ldots,-\infty)} = y_{(\nu_1,\ldots,\nu_{d-\ell},-\infty,\ldots,-\infty)}.
\end{equation}
The base case $\ell=0$ follows directly from \Cref{thm:prob_from_observables}, so assume $1 \leq \ell \leq d$. We first claim that for any $(\nu_1,\ldots,\nu_{d-\ell}) \in \Sig_{d-\ell}$,
\begin{equation}\label{eq:trunc_marg_x_y}
x_{(\nu_1,\ldots,\nu_{d-\ell})} := \sum_{-\infty \leq \nu_d \leq \ldots \leq \nu_{d-\ell+1}} x_{(\nu_1,\ldots,\nu_d)} = \sum_{-\infty \leq \nu_d \leq \ldots \leq \nu_{d-\ell+1}} y_{(\nu_1,\ldots,\nu_d)} =: y_{(\nu_1,\ldots,\nu_{d-\ell})}. 
\end{equation}
By taking $\la$ in \eqref{eq:x_y_moments} with $\la_{d-\ell+1} = 0$, we have
\begin{equation}
\sum_{(\nu_1,\ldots,\nu_{d-\ell}) \in \bSig_{d-\ell}} x_{(\nu_1,\ldots,\nu_{d-\ell})} q^{\sum_{i=1}^{d-\ell} \la_i \nu_i} = \sum_{\nu \in \bSig_d} x_\nu q^{\sum_{i=1}^{d-\ell} \la_i \nu_i},
\end{equation}
hence the equality
\begin{equation}
\sum_{(\nu_1,\ldots,\nu_{d-\ell}) \in \bSig_{d-\ell}} x_{(\nu_1,\ldots,\nu_{d-\ell})} q^{\sum_{i=1}^{d-\ell} \la_i \nu_i} = \sum_{(\nu_1,\ldots,\nu_{d-\ell}) \in \bSig_{d-\ell}} y_{(\nu_1,\ldots,\nu_{d-\ell})} q^{\sum_{i=1}^{d-\ell} \la_i \nu_i}
\end{equation}
is a special case of \eqref{eq:x_y_moments}. By convergence of the sum formula for $C_{(0,\ldots,0)}$, the $x_\mu$ and $y_\mu$ are summable, and it follows easily that the numbers $x_{(\nu_1,\ldots,\nu_{d-\ell})}$ and $y_{(\nu_1,\ldots,\nu_{d-\ell})}$ are summable, hence \eqref{eq:trunc_marg_x_y} follows by applying \Cref{thm:prob_from_observables} with the $d$ in that result given by our $d-\ell$; note that we have only shown \eqref{eq:trunc_marg_x_y} when $(\nu_1,\ldots,\nu_{d-\ell}) \in \Sig_{d-\ell}$, not $\bSig_{d-\ell}$. But now 
\begin{align}
\begin{split}
x_{(\nu_1,\ldots,\nu_{d-\ell},-\infty,\ldots,-\infty)} &= x_{(\nu_1,\ldots,\nu_{d-\ell})} - \sum_{\substack{-\infty \leq \nu_d \leq \ldots \leq \nu_{d-\ell+1} \\ \nu_{d-\ell+1} > -\infty}} x_{(\nu_1,\ldots,\nu_d)} \\ 
&= y_{(\nu_1,\ldots,\nu_{d-\ell})} - \sum_{\substack{-\infty \leq \nu_d \leq \ldots \leq \nu_{d-\ell+1} \\ \nu_{d-\ell+1} > -\infty}} y_{(\nu_1,\ldots,\nu_d)} \\ 
&=y_{(\nu_1,\ldots,\nu_{d-\ell},-\infty,\ldots,-\infty)},
\end{split}
\end{align}
where we applied \eqref{eq:trunc_marg_x_y} and the inductive hypothesis in $\ell$ for the middle equality. This completes the induction and hence the proof.
\end{proof}

The below lemma is precisely our reason for working on the compactification $\bSig_d$ rather than $\Sig_d$, as it gives us tightness without having to show lower tail bounds for $X^{(n)}$ (upper tail bounds follow from convergence of Hom-moments, as we see in the lemma's proof).

\begin{lemma}\label{thm:tightness}
Let $\{C_\la: \la \in \Sig_d^{\geq 0}\}$ be any nonnegative real numbers, and let $(\meas_n)_{n \geq 1}$ be a sequence of probability measures satisfying \eqref{eq:limit_moments_general} for all $\la \in \Sig_d^{\geq 0}$. Then the sequence $(\meas_n)_{n \geq 1}$ is uniformly tight with respect to the topology of \Cref{def:extended_sigs}.
\end{lemma}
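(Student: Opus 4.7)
The plan is to use Markov's inequality on the upper tail of the first coordinate $\mu_1$ to establish tightness, leveraging the fact that $\bSig_d$ is already compactified at $-\infty$. Since mass cannot escape to $-\infty$ in the topology of \Cref{def:extended_sigs}, only escape to $+\infty$ needs to be controlled, and a single moment bound will suffice.

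First, I would take $\la = (1,0,\ldots,0) \in \Sig_d^{\geq 0}$ in the hypothesis \eqref{eq:limit_moments_general}, giving
$$\lim_{n \to \infty} \E_{\meas_n}[q^{\mu_1}] = C_{(1,0,\ldots,0)}$$
with the convention $q^{-\infty} = 0$. Since the limit exists, $M := \sup_n \E_{\meas_n}[q^{\mu_1}] < \infty$, and Markov's inequality yields
$$\meas_n(\{\mu \in \bSig_d : \mu_1 \geq b+1\}) \leq q^{-(b+1)} M$$
for every $b \in \Z$. Given $\eps > 0$, choosing $b$ with $q^{-(b+1)} M < \eps$ shows $\meas_n(K_b) \geq 1 - \eps$ uniformly in $n$, where $K_b := \{\mu \in \bSig_d : \mu_1 \leq b\}$.

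Second, I would verify that $K_b$ is compact. In $\bSig_1$, any open cover of $[-\infty,b]$ contains a basic open set surrounding $-\infty$, which by definition includes a generator $[-\infty,a]$ covering $\{-\infty\} \cup \Z_{\leq a}$; the finite remainder $\{a+1, \ldots, b\}$ is then covered by finitely many singletons from the cover, so $[-\infty,b]$ is compact. The product $[-\infty,b]^d$ is compact as a finite product of compact spaces, and since $\mu \in \bSig_d$ forces $\mu_i \leq \mu_1 \leq b$ for every $i$, we have $K_b \subset [-\infty,b]^d$. Finally, $K_b$ is closed in $\bSig_d$ since its complement $\{\mu : \mu_1 > b\}$ is a union of the open singletons $\{\mu \in \bSig_d : \mu_1 = c\}$ for $c > b$. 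Thus $K_b$ is a closed subset of a compact space, hence compact, which completes the tightness argument.

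I do not anticipate a real obstacle here beyond properly unpacking the compactification topology on $\bSig_d$ (in particular making sure that $[-\infty,b]$ is compact despite all integer singletons being open). The key structural point is that the partial ordering in $\bSig_d$ converts the single moment bound on $\mu_1$ into simultaneous upper-tail control on every coordinate $\mu_i$, while escape to $-\infty$ is absorbed by the compactification; this is precisely why the lemma is natural on $\bSig_d$ rather than on $\Sig_d$.
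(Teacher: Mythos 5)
Your proposal is correct and takes essentially the same approach as the paper: both apply the hypothesis with $\la = (1,0,\ldots,0)$ to bound $\sup_n \E_{\meas_n}[q^{\mu_1}]$, invoke Markov's inequality on the upper tail of $\mu_1$, and use compactness of $\{\mu \in \bSig_d : \mu_1 \leq b\}$. The only differences are cosmetic — you argue directly while the paper argues by contradiction, and you spell out the verification that $K_b$ is compact, which the paper simply asserts.
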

\begin{proof}
Suppose the conclusion is false. Then there exists some $\eps > 0$ for which there does not exist any compact set $K \subset \bSig_d$ with $\meas_n(K) > 1-\eps$ for all $n$. In particular, for any $u \in \Z$ the set 
\begin{equation}
K_u := \{\la \in \bSig_d: \la_1 \leq u\}
\end{equation}
is compact, hence for any $u$ 
\begin{equation}\label{eq:limsup_prob_eps}
\limsup_{n \to \infty} \meas_n(\bSig_d \setminus K_u) \geq \eps.
\end{equation}
By \eqref{eq:limit_moments_general} with $\la = (1,0,\ldots,0)$, we have that $\sum_{\mu \in \bSig_d} q^{\mu_1} \meas_n(\mu)$ converges as $n \to \infty$, hence it is in particular bounded above by some $D$. Hence by the Chernoff bound,
\begin{equation}
\meas_n(\{\mu \in \Sig_d: \mu_1 \geq a\}) \leq \frac{D}{q^{a}}
\end{equation}
for all $n$. Choosing $a \in \Z$ large enough that $D/q^a < \eps$ and setting $u=a-1$ yields a contradiction with \eqref{eq:limsup_prob_eps}, completing the proof.
\end{proof}

\begin{proof}[Proof of \Cref{thm:moment_convergence_general}]
First, by Prokhorov's theorem and \Cref{thm:tightness} we may choose a subsequence of $\N$ along which $X^{(n)}$ converges weakly in distribution to some probability measure $\tmeas$ on $\bSig_d$. For notational convenience we let $\tX$ be a $\bSig_d$-valued random variable with law $\tmeas$. Below, we use $\lim'_{n\to\infty}$ to denote the limit as $n \to \infty$ along this chosen subsequence. 

We wish to show, for any $\la \in \Sig_d^{\geq 0}$, that 
\begin{equation}
\sum_{\mu \in \bSig_d} \Pr(\tX=\mu) q^{\sum_{i=1}^d \la_i \mu_i} = C_\la,
\end{equation}
for which it suffices to show
\begin{equation}
\label{eq:apply_dct}
\lim'_{n \to \infty} \sum_{\mu \in \bSig_d} \Pr(X^{(n)}=\mu) q^{\sum_{i=1}^d \la_i \mu_i} = \sum_{\mu \in \bSig_d} \Pr(\tX=\mu) q^{\sum_{i=1}^d \la_i \mu_i},
\end{equation}
since the left hand side is $C_\la$ by \eqref{eq:limit_moments_general}. We assume without loss of generality that $\la_d > 0$, since if $\la = (\la_1,\ldots,\la_j,0,\ldots,0)$ then both sides of \eqref{eq:apply_dct} are independent of $X^{(n)}_{j+1},\ldots,X^{(n)}_d$ and of $\tX_{j+1},\ldots,\tX_d$ respectively, so we may give the same proof with $d$ replaced by $j$ and $X^{(n)}, \tX$ replaced by $(X^{(n)}_1,\ldots,X^{(n)}_j)$ and $(\tX_1,\ldots,\tX_j)$. We note that the assumption $\la_d > 0$ guarantees that the only contributions to the sum on the right hand side of \eqref{eq:apply_dct} come from $\mu \in \Sig_d$, which is what we need to use later; if $\la_d$ were equal to $0$, then terms with $\mu_d =-\infty$ would potentially contribute to the sum.

For the sake of exposition, we first show \eqref{eq:apply_dct} in the case $d=1$. For any cutoff parameter $c \in \Z$, we write 
\begin{equation}
\label{eq:c_split_d=1}
\text{LHS\eqref{eq:apply_dct}} = \lim'_{n \to \infty} \sum_{m \geq c} \Pr(X^{(n)} = (m)) q^{\la_1 m} + \sum_{m < c} \Pr(X^{(n)} = (m)) q^{\la_1 m}
\end{equation}
(the summand when $m=-\infty$ is $0$, see the previous paragraph). We first treat the first sum. Note that by hypothesis, the moment
\begin{equation}
\sum_{(m) \in \bSig_1} \Pr(X^{(n)} = (m)) q^{(2\la_1+1) m}
\end{equation}
converges as $n \to \infty$, hence in particular it is bounded above by some constant $D_\la$, and so since all summands are positive
\begin{equation}
\sum_{m \geq c} \Pr(X^{(n)} = (m)) q^{(2\la_1+1) m} \leq D_\la.
\end{equation}
Hence
\begin{align}\label{eq:exp_Dla_bound}
\begin{split}
\Pr(X^{(n)} = (m)) q^{\la_1 m} &= \frac{q^{\la_1 m}}{q^{(2\la_1+1)m}} \Pr(X^{(n)} = (m)) q^{(2\la_1+1)m}
\\
&\leq  q^{(-\la_1-1)m} \sum_{m' \in \bZ} \Pr(X^{(n)} = (m')) q^{(2\la_1+1)m'} \\
&\leq  q^{(-\la_1-1)m} D_\la.
\end{split}
\end{align}
Since $\sum_{m \geq c} q^{(-\la_1-1)m}$ converges, this shows that $m \mapsto \Pr(X^{(n)} = (m)) q^{\la_1 m}$ is dominated by an integrable function $m \mapsto q^{(-\la_1-1)m} D_\la$ on $\Z_{\geq c}$. Hence dominated convergence applied to the first sum yields
\begin{equation}\label{eq:d=1_c_limit}
\text{RHS\eqref{eq:c_split_d=1}} = \sum_{m \geq c} \Pr(\tX = (m)) q^{\la_1 m} + \lim'_{n \to \infty}\sum_{m < c} \Pr(X^{(n)} = (m)) q^{\la_1 m}.
\end{equation}
This argument of course does not work for the second sum, since the right hand side of \eqref{eq:exp_Dla_bound} is not summable over $m < c$. This is the reason for introducing the cutoff, as in lieu of dominated convergence we may simply send $c \to -\infty$ to kill the second sum. To do this, we note the trivial bound 
\begin{equation}\label{eq:c_bound_d=1}
\sum_{m < c} \Pr(X^{(n)} = (m)) q^{\la_1 m} \leq q^{c \la_1},
\end{equation}
since the left hand side is maximized when $X^{(n)} = c-1$ with probability $1$ (recall $\la_1 \geq 1$). Because $c$ does not appear in \eqref{eq:apply_dct}, we may insert $\lim_{c \to -\infty}$ to obtain
\begin{align}
\begin{split}
\text{LHS\eqref{eq:apply_dct}} &= \lim_{c \to -\infty} \text{LHS\eqref{eq:apply_dct}} \\ 
&=  \lim_{c \to -\infty} \sum_{m \geq c} \Pr(\tX = (m)) q^{\la_1 m} + \lim_{c \to -\infty} \lim'_{n \to \infty}\sum_{m < c} \Pr(X^{(n)} = (m)) q^{\la_1 m} \\ 
&= \sum_{m \in \Z} \Pr(\tX = (m)) q^{\la_1 m} + 0 \\ 
&= \sum_{m \in \bZ} \Pr(\tX = (m)) q^{\la_1 m}
\end{split}
\end{align}
where we used monotone convergence on the first summand, the bound \eqref{eq:c_bound_d=1} on the second sum, and then in the last line noted that $\Pr(\tX = (m)) q^{\la_1 m}=0$ when $m=-\infty$ because $\la_1 > 0$. This establishes \eqref{eq:apply_dct} when $d=1$. 

Now we prove \eqref{eq:apply_dct} for general $d$, which uses the same trick, with a more complicated version of the decomposition \eqref{eq:c_split_d=1}. For any cutoff parameter $c \in \Z$, we may split into $d+1$ sums via 
\begin{equation}\label{eq:c_split}
\text{LHS\eqref{eq:apply_dct}} = \lim'_{n \to \infty} \left(\sum_{\substack{\mu \in \bSig_d \\ \mu_d \geq c}} +  \sum_{\substack{\mu \in \bSig_d \\ \mu_d < c \leq \mu_{d-1}}} + \sum_{\substack{\mu \in \bSig_d \\ \mu_{d-1} < c \leq \mu_{d-2}}} + \ldots + \sum_{\substack{\mu \in \bSig_d \\ \mu_1 < c}} \right)(\Pr(X^{(n)} = \mu) q^{\sum_{i=1}^d \la_i \mu_i}).
\end{equation}
We first treat the first sum, which will be the only one to contribute in the limit when $c \to -\infty$ that we take later: we claim that
\begin{equation}
\label{eq:dct_lbd}
\lim'_{n \to \infty} \sum_{\substack{\mu \in \bSig_d \\ \mu_d \geq c}} \Pr(X^{(n)} = \mu) q^{\sum_{i=1}^d \la_i \mu_i} = \sum_{\substack{\mu \in \bSig_d \\ \mu_d \geq c}} \Pr(\tX = \mu) q^{\sum_{i=1}^d \la_i \mu_i},
\end{equation}
which we will show by dominated convergence.

To this end let $\tl = (2\la_i+1)_{1 \leq i \leq d} \in \Sig_d^{\geq 0}$, and note that 
\begin{equation}
\sum_{\substack{\mu \in \bSig_d \\ \mu_d \geq c}} \frac{q^{\sum_{i=1}^d \la_i \mu_i}}{q^{\sum_{i=1}^d \tl_i \mu_i}} = q^{c(|\la|-|\tl|)} \sum_{\mu_1 \geq \ldots \geq \mu_d \geq 0} q^{\sum_{i=1}^d \mu_i(-\la_i-1)}
\end{equation}
converges. There exists a constant $D_\la$ such that
\begin{equation}
 \sum_{\substack{\mu \in \bSig_d}} \Pr(X^{(n)} = \mu)  q^{\sum_{i=1}^d \tl_i \mu_i} < D_\la 
\end{equation}
for all $n$, since the left hand side converges as $n \to \infty$ by \eqref{eq:renorm_moment_conv}. Thus the summands in \eqref{eq:dct_lbd} are bounded by 
\begin{align}
\begin{split}
\Pr(X^{(n)} = \mu) q^{\sum_{i=1}^d \la_i \mu_i} &= \frac{q^{\sum_{i=1}^d \la_i \mu_i}}{q^{\sum_{i=1}^d \tl_i \mu_i}} \Pr(X^{(n)} = \mu) q^{\sum_{i=1}^d \tl_i \mu_i}
\\
&\leq \frac{q^{\sum_{i=1}^d \la_i \mu_i}}{q^{\sum_{i=1}^d \tl_i \mu_i}} \sum_{\nu \in \bSig_d} \Pr(X^{(n)} = \nu) q^{\sum_{i=1}^d \tl_i \nu_i} \\ 
& \leq \frac{q^{\sum_{i=1}^d \la_i \mu_i}}{q^{\sum_{i=1}^d \tl_i \mu_i}}D_\la.
\end{split}
\end{align}
Furthermore, the sum over $\{\mu \in \bSig_d: \mu_d \geq c\}$ of this expression converges as we just showed, so \eqref{eq:dct_lbd} follows by the Lebesgue dominated convergence theorem. 

Now we treat the other sums in \eqref{eq:c_split}, so let $0 \leq \ell \leq d-1$ and consider the sum 
\begin{multline}\label{eq:l_sum}
\sum_{\substack{\mu \in \bSig_d \\ \mu_{\ell+1} < c \leq \mu_{\ell}}} \Pr(X^{(n)} = \mu) q^{\sum_{i=1}^d \la_i \mu_i} = \sum_{\substack{\nu \in \bSig_{\ell} \\ \nu_{\ell} \geq c}} \Pr((X_1^{(n)},\ldots,X_{\ell}^{(n)})=\nu) q^{\sum_{i=1}^\ell \la_i \nu_i} \\ 
\times \sum_{\substack{\rho \in \bSig_{d-\ell} \\ \rho_1 < c}} \Pr((X_{\ell+1}^{(n)},\ldots,X_{d}^{(n)})=\rho|(X_1^{(n)},\ldots,X_{\ell}^{(n)})=\nu) q^{\sum_{i=1}^{d-\ell} \la_{i+\ell}\rho_i},
\end{multline}
where when $\ell=0$ we ignore the $c \leq \mu_{\ell}$ inequality (or take $\mu_0 :=\infty$). For the inner sum we have the bound
\begin{equation}
\sum_{\substack{\rho \in \bSig_{d-\ell} \\ \rho_1 < c}} \Pr((X_{\ell+1}^{(n)},\ldots,X_{d}^{(n)})=\rho|(X_1^{(n)},\ldots,X_{\ell}^{(n)})=\nu) q^{\sum_{i=1}^{d-\ell} \la_{i+\ell}\rho_i} \leq q^{c\sum_{i=1}^{d-\ell}\la_{i+\ell}}
\end{equation}
by bounding the $q^{\sum_{i=1}^{d-\ell} \la_{i+\ell}\rho_i}$ factors by the above and then noting the probabilities must sum to $1$. Hence 
\begin{equation}\label{eq:bound_c_sum}
\text{RHS\eqref{eq:l_sum}} \leq q^{c\sum_{i=1}^{d-\ell}\la_{i+\ell}} \sum_{\substack{\nu \in \bSig_{\ell} \\ \nu_{\ell} \geq c}} \Pr((X_1^{(n)},\ldots,X_{\ell}^{(n)})=\nu) q^{\sum_{i=1}^\ell \la_i \nu_i}.
\end{equation}
But 
\begin{multline}
  \sum_{\substack{\nu \in \bSig_{\ell} \\ \nu_{\ell} \geq c}} \Pr((X_1^{(n)},\ldots,X_{\ell}^{(n)})=\nu) q^{\sum_{i=1}^\ell \la_i \nu_i} \\ \leq \sum_{\substack{\nu \in \bSig_{\ell} }} \Pr((X_1^{(n)},\ldots,X_{\ell}^{(n)})=\nu) q^{\sum_{i=1}^\ell \la_i \nu_i}  \to \E[q^{\sum_{i=1}^\ell \la_i \nu_i}]
\end{multline}
by the hypothesis \eqref{eq:renorm_moment_conv} with the partition $(\la_1,\ldots,\la_\ell,0,\ldots,0)$. Hence there is a constant $C_\ell$ independent of $c$ such that 
\begin{equation}\label{eq:bound_c_sum2}
0 \leq \text{RHS\eqref{eq:l_sum}} \leq q^{c\sum_{i=1}^{d-\ell}\la_{i+\ell}} C_\ell
\end{equation}
for all $n$. Since the limit on the left hand side of 
\begin{multline}\label{eq:limit_split_c}
\lim'_{n \to \infty} \sum_{\mu \in \bSig_d} \Pr(X^{(n)}=\mu) q^{\sum_{i=1}^d \la_i \mu_i} = \lim'_{n \to \infty} \sum_{\substack{\mu \in \bSig_d \\ \mu_d \geq c}} \Pr(X^{(n)} = \mu) q^{\sum_{i=1}^d \la_i \mu_i}  \\ + \lim'_{n \to \infty} \sum_{\ell=0}^{d-1} \sum_{\substack{\mu \in \bSig_d \\ \mu_{\ell+1} < c \leq \mu_{\ell}}} \Pr(X^{(n)} = \mu) q^{\sum_{i=1}^d \la_i \mu_i}
\end{multline}
exists by \eqref{eq:renorm_moment_conv}, and the first limit on the right exists by \eqref{eq:dct_lbd}, the second limit on the right exists also. However, by \eqref{eq:bound_c_sum2},
\begin{equation}\label{eq:pc_bound}
\lim'_{n \to \infty} \sum_{\ell=0}^{d-1} \sum_{\substack{\mu \in \bSig_d \\ \mu_{\ell+1} < c \leq \mu_{\ell}}} \Pr(X^{(n)} = \mu) q^{\sum_{i=1}^d \la_i \mu_i} \leq \sum_{\ell=0}^{d-1} q^{c\sum_{i=1}^{d-\ell}\la_{i+\ell}} C_\ell.
\end{equation}
Because we assumed $\la_d > 0$, 
\begin{equation}\label{eq:cdl_pc_bd}
q^{c\sum_{i=1}^{d-\ell}\la_{i+\ell}} \leq q^c
\end{equation}
for all $0 \leq \ell \leq d-1$ and all $c \leq 0$. Because the left hand side of \eqref{eq:limit_split_c} is manifestly independent of $c$ (and hence so is the right hand side, less obviously),
\begin{align}\label{eq:limit_of_splits}
\begin{split}
\text{LHS\eqref{eq:limit_split_c}} &= \lim_{c \to -\infty} \text{RHS\eqref{eq:limit_split_c}} \\
&= \lim_{c \to -\infty} \sum_{\substack{\mu \in \bSig_d \\ \mu_d \geq c}} \Pr(\tX = \mu) q^{\sum_{i=1}^d \la_i \mu_i} + \lim_{c \to -\infty} \lim'_{n \to \infty} \sum_{\ell=0}^{d-1} \sum_{\substack{\mu \in \bSig_d \\ \mu_{\ell+1} < c \leq \mu_{\ell}}} \Pr(X^{(n)} = \mu) q^{\sum_{i=1}^d \la_i \mu_i} \\ 
&= \sum_{\substack{\mu \in \bSig_d}} \Pr(\tX = \mu) q^{\sum_{i=1}^d \la_i \mu_i} + 0
\end{split}
\end{align}
where we used \eqref{eq:dct_lbd} for the second line, and then used monotone convergence (all summands are clearly nonnegative) for the first limit in $c$, and the bound \eqref{eq:pc_bound} together with the observation \eqref{eq:cdl_pc_bd} for the second limit. Combining \eqref{eq:c_split} with \eqref{eq:limit_of_splits} yields \eqref{eq:apply_dct}. This, in particular, shows that there exists a probability measure ($\tmeas$) with $\la$-moments $C_\la$.


Now suppose for the sake of contradiction that 
\begin{equation}\label{eq:lim_for_diag}
\lim_{n \to \infty} \Pr(X^{(n)}=\mu)
\end{equation}
did not exist for at least some $\mu \in \bSig_d$. Then by tightness (\Cref{thm:tightness}) and a diagonalization argument, there exist two subsequences of $\N$ along which the limits \eqref{eq:lim_for_diag} exist for every $\mu$, but differ for at least one $\mu$. We know that the $\la$-moments of \eqref{eq:limit_moments_general} converge when the limit in $n$ in \eqref{eq:limit_moments_general} is replaced with the limit along one of our subsequences. However, we may apply the above argument to each subsequence and obtain convergence in distribution to another limit law $\tmeas'$, with $\tmeas'(\{\mu\})\neq \tmeas(\{\mu\})$. This contradicts \Cref{thm:observables_suffice}. Hence the limits \eqref{eq:lim_for_diag} exist for each $\mu \in \bSig_d$, completing the proof.
\end{proof}

\subsection{Proof of \Cref{thm:dist_existence_intro}.} This section mainly consists of CDF versions of \Cref{thm:H_invert} and \Cref{thm:prob_from_observables}, given as \Cref{thm:H_to_cdf} and \Cref{thm:cdf_from_observables} below. The proofs are the same as their counterparts, using the second part of \Cref{thm:int_ind_and_cdf} instead of the first.

\begin{lemma}\label{thm:H_to_cdf}
For any $\nu \in \Sig_d, \mu \in \bSig_d$ and $\hH_{d,q,\beta}$ as in \Cref{thm:H_bound_and_triangularity}, we have
\begin{multline}
\label{eq:H_to_cdf}
\sum_{\substack{\beta \in \Sig_d: \\ \beta \leq \nu}} \prod_{i=1}^d \frac{(-1)^{\sum_{j=1}^i \nu_j - \beta_j} q^{-\binom{(\sum_{j=1}^i \nu_j - \beta_j)+1}{2}}(q^{-(\beta_{i-1}-\beta_i-(\sum_{j=1}^i \nu_j - \beta_j))-1};q^{-1})_\infty}{(q^{-1};q^{-1})_\infty (q^{-1};q^{-1})_{\sum_{j=1}^i \nu_j - \beta_j}}\hH_{d,q,\beta}(\mu) \\ 
= \bbone(\mu \leq \nu),
\end{multline}
where for the $i=1$ term in the product we take the convention that $\beta_0 + \nu_1 = \infty$ for any $\nu_1 \in \Z$. Here we note that $\mu$ may be an extended signature while $\nu$ may not, and adopt the usual convention $q^{-\infty} := 0$.
\end{lemma}
\begin{proof}
The case when $d=1$ is exactly the second part of \Cref{thm:int_ind_and_cdf}, and as with \Cref{thm:H_invert}, the strategy is to use it inductively for general $d$. As in that proof, we may write 
\begin{multline}\label{eq:split_beta_sum_cdf}
\text{LHS\eqref{eq:H_to_cdf}} \\ 
= \sum_{\substack{\tbeta \in \Sig_{d-1}: \\\tbeta \leq (\nu_1,\ldots,\nu_{d-1})  \\ \tbeta \geq (\mu_1,\ldots,\mu_{d-1})}} \prod_{i=1}^{d-1} \frac{(-1)^{\sum_{j=1}^i \nu_j - \tbeta_j} q^{-\binom{(\sum_{j=1}^i \nu_j - \tbeta_j)+1}{2}}(q^{-(\tbeta_{i-1}-\tbeta_i-(\sum_{j=1}^i \nu_j - \tbeta_j))-1};q^{-1})_\infty}{(q^{-1};q^{-1})_\infty (q^{-1};q^{-1})_{\sum_{j=1}^i \nu_j - \tbeta_j}}  \\ 
\times \hH_{d-1,q,\tbeta}((\mu_1,\ldots,\mu_{d-1})) \\ 
\times \sum_{\substack{b \in \Z: \\ b \leq |\nu|-|\tbeta|, \\ b \geq |\mu| - |\tbeta|} }
 \frac{(-1)^{\nu_d-b+\sum_{j=1}^{d-1} \nu_j - \tbeta_j} q^{-\binom{\nu_d-b+(\sum_{j=1}^{d-1} \nu_j - \tbeta_j)+1}{2}} (q^{-(\tbeta_{d-1}-\nu_d-(\sum_{j=1}^{d-1} \nu_j - \tbeta_j))-1};q^{-1})_\infty}{(q^{-1};q^{-1})_\infty (q^{-1};q^{-1})_{\nu_d - b + \sum_{j=1}^{d-1} \nu_j - \tbeta_j}} \\ 
 \times (q^{|\mu|-|\tbeta|-b-1};q^{-1})_{\tbeta_{d-1}-b}.
\end{multline}
The second part of \Cref{thm:int_ind_and_cdf} yields that the second line of \eqref{eq:split_beta_sum_cdf} is $\bbone(\sum_{j=1}^d \mu_j \leq \sum_{j=1}^d \nu_j)$, and the argument can be continued inductively as with \Cref{thm:H_invert}.
\end{proof}

\begin{prop}\label{thm:cdf_from_observables}
In the same setup of \Cref{thm:prob_from_observables}, for any $\nu \in \Sig_d$
\begin{multline}\label{eq:cdf_from_observables}
\sum_{\mu \leq \nu} x_\mu \\ 
= \sum_{\substack{\beta \in \Sig_d: \\ \beta \leq \nu}} \prod_{i=1}^d \frac{(-1)^{\sum_{j=1}^i \nu_j - \beta_j} q^{-\binom{(\sum_{j=1}^i \nu_j - \beta_j)+1}{2}}}{(q^{-1};q^{-1})_{\sum_{j=1}^i \nu_j - \beta_j}(q^{-1};q^{-1})_{\beta_{i-1}-\beta_i-(\sum_{j=1}^i \nu_j - \beta_j)}}  \sum_{\substack{\la \in \Sig_d^{\geq 0}: \\ \la_2 \leq \beta_1-\beta_d}} a_\beta(\la) C_\la.
\end{multline}
\end{prop}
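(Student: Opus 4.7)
The plan is to mimic the proof of \Cref{thm:prob_from_observables} verbatim, swapping out \Cref{thm:H_invert} for its CDF analogue \Cref{thm:H_to_cdf} at the very last step. All of the analytic work (absolute convergence, application of Fubini) carries over essentially without change because the two inversion identities differ only in an extra factor of $q^{-(\sum_{j=1}^i \nu_j - \beta_j)}$ in each factor of the product over $i$, which is bounded above by $1$ and so does not affect any of the bounds.

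Concretely, for fixed $\beta \in \Sig_d$ with $\beta \leq \nu$, the identity
\begin{equation*}
\sum_{\mu \in \bSig_d} x_\mu \hH_{d,q,\beta}(\mu) = \sum_{\substack{\la \in \Sig_d^{\geq 0}: \\ \la_2 \leq \beta_1 - \beta_d}} a_\beta(\la) C_\la
\end{equation*}
has already been established in the course of proving \Cref{thm:prob_from_observables}; this step uses only the nicely-behaved hypothesis on $\{C_\la\}$ together with the Taylor coefficient bound of \Cref{thm:H_from_melanie}, and does not depend on the particular linear combination in $\beta$ that we are about to take. First I would quote that computation to reduce the right-hand side of \eqref{eq:cdf_from_observables} to
\begin{equation*}
\sum_{\substack{\beta \in \Sig_d \\ \beta \leq \nu}} \prod_{i=1}^d \frac{(-1)^{\sum_{j=1}^i \nu_j - \beta_j} q^{-\binom{\sum_{j=1}^i \nu_j - \beta_j+1}{2}}}{(q^{-1};q^{-1})_{\sum_{j=1}^i \nu_j - \beta_j}(q^{-1};q^{-1})_{\beta_{i-1}-\beta_i-\sum_{j=1}^i \nu_j - \beta_j}} \sum_{\mu \in \bSig_d} x_\mu \hH_{d,q,\beta}(\mu).
\end{equation*}

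Next I would verify absolute convergence of the resulting double sum in $\beta$ and $\mu$ so that Fubini applies and the order of summation can be reversed. The bound
\begin{equation*}
0 \leq \hH_{d,q,\beta}(\mu) \leq 1
\end{equation*}
from \Cref{thm:H_bound_and_triangularity}, together with the trivial estimate $(q^{-1};q^{-1})_k \geq (q^{-1};q^{-1})_\infty$ and the Gaussian decay of $q^{-\binom{\sum_{j=1}^i \nu_j - \beta_j + 1}{2}}$ in each coordinate of $\nu - \beta$, gives the same type of absolute bound as in \eqref{eq:bound_xmu_sum}, multiplied by the already-assumed $\sum_{\mu} |x_\mu| < \infty$. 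This is where the extra $q^{-(\sum_{j=1}^i \nu_j - \beta_j)}$ factor is harmless: it only improves the bound. Thus the double sum is absolutely convergent, so we may exchange sums to obtain
\begin{equation*}
\sum_{\mu \in \bSig_d} x_\mu \sum_{\substack{\beta \in \Sig_d \\ \beta \leq \nu}} \prod_{i=1}^d \frac{(-1)^{\sum_{j=1}^i \nu_j - \beta_j} q^{-\binom{\sum_{j=1}^i \nu_j - \beta_j+1}{2}}}{(q^{-1};q^{-1})_{\sum_{j=1}^i \nu_j - \beta_j}(q^{-1};q^{-1})_{\beta_{i-1}-\beta_i-\sum_{j=1}^i \nu_j - \beta_j}} \hH_{d,q,\beta}(\mu).
\end{equation*}
Here I am implicitly rewriting the $(q^{-1};q^{-1})_{\beta_{i-1}-\beta_i-\sum_{j=1}^i \nu_j - \beta_j}$ factor in the denominator using the identity $(q^{-(\beta_{i-1}-\beta_i-\sum_{j=1}^i \nu_j - \beta_j)-1};q^{-1})_\infty = (q^{-1};q^{-1})_\infty / (q^{-1};q^{-1})_{\beta_{i-1}-\beta_i-\sum_{j=1}^i \nu_j - \beta_j}$, in exact parallel with the corresponding step of \Cref{thm:prob_from_observables}, so that the inner sum matches the left-hand side of \eqref{eq:H_to_cdf} verbatim.

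Finally, applying \Cref{thm:H_to_cdf} collapses the inner sum over $\beta$ to $\bbone(\mu \leq \nu)$, yielding $\sum_{\mu \leq \nu} x_\mu$, which is exactly the left-hand side of \eqref{eq:cdf_from_observables}. The main conceptual hurdle (the absolute convergence needed to justify exchanging infinite sums) is identical to the one already handled in \Cref{thm:prob_from_observables}; the only genuinely new ingredient is \Cref{thm:H_to_cdf}, and everything else is routine bookkeeping.
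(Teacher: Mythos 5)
Your proposal is correct and follows essentially the same route as the paper's proof: re-use the absolute-convergence and Fubini argument from \Cref{thm:prob_from_observables} (noting, as you do, that the extra factor $q^{-(\sum_{j=1}^i \nu_j-\beta_j)}$ coming from $\binom{n+1}{2}$ versus $\binom{n}{2}$ only improves the bounds since $q>1$), then swap sums and apply \Cref{thm:H_to_cdf} in place of \Cref{thm:H_invert}. The paper states this more tersely --- it simply points back to the earlier proof with the remark "the only difference is the power of $q$" --- but your writeup is the same argument spelled out.
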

\begin{proof}[Proof of \Cref{thm:cdf_from_observables}]
As shown in the proof of \Cref{thm:prob_from_observables}, the sum
\begin{equation}\label{eq:x_mu_abs_conv2}
\sum_{\mu \in \bSig_d}  \sum_{\substack{\la \in \Sig_d^{\geq 0}: \\ \la_2 \leq \beta_1-\beta_d}} a_\beta(\la) q^{\sum_{i=1}^d \la_i \mu_i} x_\mu
\end{equation}
converges absolutely, and by the same Fubini argument as for \eqref{eq:bound_xmu_sum} (the only difference is the power of $q$) we have
\begin{equation*}
\text{RHS\eqref{eq:cdf_from_observables}} = \sum_{\mu \in \bSig_d} x_\mu \sum_{\substack{\beta \in \Sig_d: \\ \beta \leq \nu}} \prod_{i=1}^d \frac{(-1)^{\sum_{j=1}^i \nu_j - \beta_j} q^{-\binom{(\sum_{j=1}^i \nu_j - \beta_j)+1}{2}}}{(q^{-1};q^{-1})_{\sum_{j=1}^i \nu_j - \beta_j}(q^{-1};q^{-1})_{\beta_{i-1}-\beta_i+(\sum_{j=1}^i \nu_j - \beta_j)}}\hH_{d,q,\beta}(\mu).
\end{equation*} 
The proof now follows from \Cref{thm:H_to_cdf}.
\end{proof}

\begin{proof}[Proof of \Cref{thm:dist_existence_intro}]
Uniqueness follows from \Cref{thm:observables_suffice}, and the explicit formulas follow from \Cref{thm:prob_from_observables} and \Cref{thm:cdf_from_observables} respectively.
\end{proof}

\subsection{Extended remark: discrete limits on shifted lattices}

In this work, we have chosen to shift random signatures by integers, so that they remain random integer signatures. This is the perspective taken in \cite{van2023local,van2023reflecting} where the random variables $\cL_{d,p^{-1},\chi}$ have appeared previously. It is natural from the dynamical point of view of the reflecting Poisson sea given there, and it also has the advantage that the limit random variable and all prelimit random variables live on the same discrete state space $\bSig_d$. However, from other angles it is instead more natural to shift by real numbers instead and thereby obtain random variables valued on 
\begin{equation}
  \bSig_d + \zeta := \{(\mu_1+\zeta,\ldots,\mu_d+\zeta): (\mu_i)_{1 \leq i \leq d} \in \bSig_d\}.
\end{equation}
The advantage is that this removes the necessity for rounding and choosing lifts from $\R/\Z$ to $\R$. In this subsection, we outline this other perspective and why it is equivalent. This material is not necessary for any other results in the paper, but may be helpful for some readers.

The analogue of \Cref{thm:group_convergence_intro} is as follows.

\begin{theorem}\label{thm:noround_groups_general}
Fix $p$ prime and $d \in \Z_{\geq 1}$. Let $(G_n)_{n \geq 1}$ be a sequence of random finitely-generated abelian $p$-groups and $(c_n)_{n \geq 1}$ a sequence of real numbers such that the following hold:
\begin{enumerate}[label=(\roman*)]
\item For every $\la \in \Sig_d^{\geq 0}$, $\E[\#\Hom(G_n,G_{\la'})]/p^{|\la|c_n}$ has a finite limit as $n \to \infty$,
\vskip .1in
\item The collection $\{C_\la: \la \in \Sig_d^{\geq 0}\}$, defined by 
\begin{equation}\label{eq:renorm_moment_conv_nozeta}
C_\la := \lim_{n \to \infty} \frac{\E[\#\Hom(G_n,G_{\la'})]}{p^{|\la|c_n}},
\end{equation} 
is nicely-behaved with respect to $p$.
\vskip .1 in
\item The limit $\lim_{n \to \infty} -c_n =: \zeta$ exists in $\R/\Z$.
\end{enumerate}
Then there exists a unique $\bSig_d+\zeta$-valued random variable $X = (X_1,\ldots,X_d)$ with moments $\E[p^{\sum_{i=1}^d \la_i X_i}] = C_\la$ for all $\la \in \Sig_d^{\geq 0}$, and the $(\R \cup \{-\infty\})^d$-valued random variables $X^{(n)}:=(\rank(p^{i-1}G_n) - c_n)_{1 \leq i \leq d}$ converge weakly to $X$ as $n \to \infty$.
\end{theorem}

Note that by contrast to \eqref{eq:renorm_moment_conv} in \Cref{thm:group_convergence_intro}, there is no $\zeta$-dependent prefactor in \eqref{eq:renorm_moment_conv_nozeta}. This factor is naturally accounted for by the change in the lattice. Furthermore, we did not have to choose a lift from $\R/\Z$ to $\R$. Note also that the prelimit coordinates $\rank(p^{i-1}G_n) - c_n$ now live in $\Z+c_n$, an $n$-dependent shift of the lattice $\Z$ which nonetheless converges to the lattice $\Z+\zeta$. Furthermore, weak convergence of these random variables is clearly implied by the convergence in distribution of the related versions in \Cref{thm:group_convergence_intro}, which all live on the same lattice, so \Cref{thm:noround_groups_general} is immediate from \Cref{thm:group_convergence_intro}.

\begin{defi}\label{def:funny_cL}
For any $d \in \Z_{\geq 1}$, $p$ prime and $\zeta \in \R/\Z$, we define $\hcL_{d,p^{-1},\zeta}$ as the random variable valued in $\Sig_d + \zeta$ with moments 
\begin{equation}
  \E[p^{\la \cdot \hcL_{d,p^{-1},\zeta}}] = \frac{n_{max}(G_{\la'})}{|\la|!}
\end{equation}
for every $\la \in \Y_d$ (c.f. \Cref{def:cL_intro}).
\end{defi}

Note that the support depends on $\zeta$ but the moments do not.

\begin{theorem}\label{thm:noround_matrix_products}
Fix $p$ prime, let $\xi$ be a $\Z$-valued random variable such that $\xi \pmod{p}$ is nonconstant. For each $n \in \mathbb{Z}_{\geq 1}$ let $A_{i}^{(n)}, i \geq 1$ be iid $n \times n$ matrices over $\Z$ with iid $\xi$ entries. Let $(k(n))_{n \geq 1}$ be a sequence of natural numbers such that $k(n) \to \infty$ as $n \rightarrow \infty$ and $k(n) = O(e^{(\log n)^{1-\eps}})$ for some $0<\eps<1$. Define
\begin{equation}
G_n = \cok(A_{k(n)}^{(n)} \cdots A_{1}^{(n)})[p^\infty],
\end{equation}
let $(n_j)_{j \geq 1}$ be any subsequence such that $\lim_{j \to \infty} -\log _{p} k(n_{j}) =: \zeta$ exists in $\R/\Z$. Then for any $d \in \Z_{\geq 1}$,
\begin{equation}\label{eq:rmt_bulk_noround}
(\rank(p^{i-1}G_{n_j})-\log _{p}(k(n_{j})))_{1 \leq i \leq d} \rightarrow \hcL_{d,p^{-1},\zeta}
\end{equation}
weakly as $j \rightarrow \infty$. Furthermore, the exact same result holds for matrices over the $p$-adic integers $\Z_p$.
\end{theorem}

\Cref{thm:noround_matrix_products} follows from \Cref{thm:noround_groups_general} by our later moment computations, with essentially the same proof as \Cref{thm:matrix_product_intro} from \Cref{thm:group_convergence_intro}.

\section{Supporting lemmas for a single matrix}\label{sect:support} 

Throughout this section fix $a \in \BBZ_{> 1}$ and set\footnote{Although it suffices to focus only on $a=p^d$, in this section for generality we allow $a$ to be product of powers of many primes.} $R= \Z/a\Z$. Let $V=R^n$ with standard basis $\Bv_i,1 \leq i \leq n$. For $\sigma \subset [n]$ we denote by $V_{\sigma^c}$ the submodule generated by $\{\Bv_i: i \in \sigma^c\}$. Throughout the paper, to declutter notation we will write $(x_1,\ldots,x_n) \in R^n$ for usual (column) vectors, and similarly for vectors in e.g. $G^n$ where $G$ is an abelian group, rather than using the notation $(x_1,\ldots,x_n)^T$.

\begin{definition}\label{def:R_alpha_balanced}
Given real $\alpha \in (0,1/2]$, we say an $R$-valued random variable $\xi$ is \emph{$\al$-balanced} if for every prime $p|a$ we have
\begin{equation}\label{eqn:alpha_R}
\max_{r \in \Z/p\BBZ} \P(\xi\equiv r \pmod{p}) \le 1-\al.
\end{equation}
\end{definition}

Most of the results below are from \cite{W0,W1}. However, for later application, we will have to work out the constants as explicitly as possible.

\subsection{Codes} 
\begin{definition} Given $w \le n$, we say that $F \in \Hom(V,G)$ is a code of distance $w$ if for every $\sigma \subset [n]$ with $|\sigma| <w$ we have $F (V_{\sigma^c})=G$.
\end{definition}

Sometimes it is convenient to identify $F$ with the vector $(F(\Bv_1),\dots, F(\Bv_n)) \in G^n$, and we will usually abuse notation and view $F$ as a vector rather than a map. In particular, if $X = (x_1,\ldots,x_n) \in R^n$ is a vector, we write $\lang F, X \rang := \sum_{i=1}^n x_i F(\Bv_i)$; note this is not a usual dot product because $(F(\Bv_1),\dots, F(\Bv_n)) \in G^n$ and $(x_1,\ldots,x_n) \in R^n$ live in different spaces, though the formula is the same. If $M$ is an $n \times n$ matrix with entries in $R$, then for any $R$-module $G$, $M$ defines a linear map $G^n \to G^n$ by usual matrix multiplication, and we write $MF$ for the image of the vector $(F(\Bv_1),\dots, F(\Bv_n)) \in G^n$ under this map.

It is convenient to work with codes because the random walk $S_k = \sum_{i=1}^k x_i F(\Bv_i)$ (in discrete time indexed by $k=1,2,\ldots,n$) spreads out in $G$ very fast, as the following lemma shows. 

\begin{lemma}[{\cite[Lemma 2.1]{W1}}]\label{lemma:code:single:1} Assume that $x_i\in R$ are iid copies of $\xi$ satisfying \eqref{eqn:alpha_R}. Then for any code $F$ of distance $\delta n$ and any $g\in G$,
$$\left|\P(\lang F, X \rang  =g) - \frac{1}{|G|}\right| \le \exp(-\al \delta n/a^2),$$
where $X=(x_1,\dots,x_n)$.
\end{lemma}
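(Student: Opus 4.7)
The plan is a standard Fourier/character-sum argument on the finite abelian group $G$, in which the code hypothesis controls how many induced characters on $R$ are nontrivial, and the $\al$-balance hypothesis supplies a per-coordinate decay estimate. First, by Fourier inversion on $G$ (with Pontryagin dual $\wh{G}$),
\[
\P(\lang F, X \rang = g) - \frac{1}{|G|} \;=\; \frac{1}{|G|}\sum_{\chi \in \wh{G}\setminus\{1\}} \overline{\chi(g)}\, \E[\chi(\lang F, X \rang)],
\]
and by independence
\[
\E[\chi(\lang F, X \rang)] \;=\; \prod_{i=1}^n \E[\chi_i(\xi)], \qquad \chi_i(r) := \chi(r F(\Bv_i)) \in \wh{R}.
\]
Setting $S_\chi := \{i \in [n] : \chi_i \ne 1\}$, the code hypothesis forces $|S_\chi| \geq \delta n$: otherwise $\sigma := S_\chi$ has $|\sigma| < \delta n$, so $\chi$ annihilates each $R \cdot F(\Bv_i)$ for $i \in \sigma^c$ and hence annihilates $F(V_{\sigma^c}) = G$, contradicting $\chi \ne 1$.

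The heart of the argument is the per-coordinate estimate $|\E[\chi_i(\xi)]| \le 1 - c\al/a^2$ for every nontrivial $\chi_i \in \wh{R}$ and some absolute $c > 0$. To prove it I would write $\chi_i(r) = e^{2\pi i b r/a}$ with $1 \le b \le a-1$, set $d := \gcd(b,a)$, $M := a/d > 1$, and $n := b/d$ (so $\gcd(n,M)=1$), so that $\chi_i$ descends to a primitive character of $\Z/M\Z$. Picking any prime $p \mid M$ and letting $\xi'$ be an independent copy of $\xi$,
\[
1 - |\E[\chi_i(\xi)]|^2 \;=\; \E\E'\bigl[1 - \cos(2\pi n(\xi-\xi')/M)\bigr].
\]
A short lattice argument shows that for any $r \in \Z$ with $r \not\equiv 0 \pmod p$ one has $M \nmid nr$ (since $p \mid M$ and $\gcd(n,M) = 1$), so $nr/M$ lies at distance $\geq 1/M$ from $\Z$ and $1 - \cos(2\pi n r/M) \geq \pi^2/M^2 \geq \pi^2/a^2$. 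Meanwhile, \eqref{eqn:alpha_R} applied to $\xi \bmod p$ gives $\P(\xi \not\equiv \xi' \pmod p) \geq \al$ (using $\sum_s \P(\xi \equiv s \bmod p)^2 \leq (1-\al)\sum_s \P(\xi \equiv s \bmod p) = 1-\al$). Combining, $1 - |\E[\chi_i(\xi)]|^2 \geq \pi^2\al/a^2$, and $\sqrt{1-x} \le 1 - x/2$ yields the claim.

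Assembling the pieces, for every nontrivial $\chi \in \wh{G}$,
\[
|\E[\chi(\lang F, X \rang)]| \;\le\; \Bigl(1 - \tfrac{\pi^2\al}{2a^2}\Bigr)^{|S_\chi|} \;\le\; \exp\!\Bigl(-\tfrac{\pi^2\al\delta n}{2a^2}\Bigr),
\]
and combining with the Fourier expansion and the trivial bound $(|\wh{G}|-1)/|G| < 1$ produces the stated estimate (with room to spare in the constant). The main obstacle is locating the per-coordinate character bound with the correct $a$-dependence: \eqref{eqn:alpha_R} constrains only $\xi$ modulo primes $p \mid a$, while $\chi_i$ is an arbitrary character of $\Z/a\Z$ whose primitive quotient can have order as large as $a$, and it is precisely this potential mismatch that forces the $1/a^2$ loss.
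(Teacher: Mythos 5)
Your Fourier-analytic argument is the standard one and is, in substance, the proof of \cite[Lemma 2.1]{W1} that the paper cites (the paper itself gives no proof, only the reference). The one slip is numerical: the inequality $1-\cos(2\pi/M) \ge \pi^2/M^2$ fails at $M=2$, where the left side is $2$ but $\pi^2/4 \approx 2.47$; the correct Jordan-type bound is $1-\cos(x) \ge 2x^2/\pi^2$ on $[0,\pi]$, which at $x=2\pi/M$ gives $1-\cos(2\pi/M) \ge 8/M^2$ (with equality at $M=2$), and this still yields $|\E[\chi_i(\xi)]| \le (1-8\al/a^2)^{1/2} \le 1-4\al/a^2 \le e^{-4\al/a^2}$ and a final bound below $\exp(-4\al\delta n/a^2) < \exp(-\al\delta n/a^2)$, so, as you anticipated, the stated constant survives with room to spare.
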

In what follows, if not specified otherwise, $X$ is always understood as the random vector $(x_1,\dots, x_n)$ where $x_i$ are iid copies of $\xi$ satisfying \eqref{eqn:alpha_R} as in Lemma \ref{lemma:code:single:1}.

Using the above result, it is not hard to deduce the following matrix form.
\begin{lemma}\label{lemma:code:single:matrix:1} Assume that the entries of $M$ of size $n$ are iid copies of $\xi$ satisfying \eqref{eqn:alpha_R}. For any code $F$ of distance $\delta n$, for any vector $A \in G^n$ 
$$\left|\P(M F = A) - \frac{1}{|G|^{n}}\right| \le \frac{2n|G| \exp(- \al \delta n/a^2)}{|G|^{n}}.$$
\end{lemma}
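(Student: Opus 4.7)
The plan is to exploit the independence of the rows of $M$ so as to reduce the matrix statement to $n$ separate applications of the vector-form Lemma \ref{lemma:code:single:1}. Let $R_1,\ldots,R_n \in R^n$ be the rows of $M$; by hypothesis they are independent, each with iid $\xi$ entries, and hence each has the distribution of the vector $X$ in Lemma \ref{lemma:code:single:1}. Identifying $F$ with the tuple $(F(\Bv_1),\ldots,F(\Bv_n)) \in G^n$, the $i$-th component of $MF$ is exactly $\langle F, R_i\rangle$, so the components $(MF)_1,\ldots,(MF)_n$ are mutually independent $G$-valued random variables. Writing $A = (a_1,\ldots,a_n) \in G^n$, this immediately gives the factorisation
$$\P(MF = A) \;=\; \prod_{i=1}^n r_i, \qquad r_i := \P\bigl(\langle F, R_i\rangle = a_i\bigr).$$

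Next I would invoke Lemma \ref{lemma:code:single:1}, which since $F$ is a code of distance $\delta n$ yields $|r_i - 1/|G|| \leq \epsilon$ with $\epsilon := \exp(-\alpha\delta n/a^2)$, uniformly in $i$. It then remains to compare $\prod_i r_i$ with $|G|^{-n}$. The standard telescoping identity
$$\prod_{i=1}^n r_i - |G|^{-n} \;=\; \sum_{i=1}^n \Big(\prod_{j<i} r_j\Big)\,\bigl(r_i - 1/|G|\bigr)\,(1/|G|)^{n-i}$$
together with the bound
$$\prod_{j<i} r_j \;\leq\; \bigl(1/|G| + \epsilon\bigr)^{i-1} \;=\; (1/|G|)^{i-1}\bigl(1+|G|\epsilon\bigr)^{i-1} \;\leq\; 2(1/|G|)^{i-1}$$
(valid whenever $(1+|G|\epsilon)^{n-1} \leq 2$) bounds each summand by $2\epsilon/|G|^{n-1}$, and summing over $i$ gives precisely $2n\epsilon/|G|^{n-1} = 2n|G|\epsilon/|G|^n$, which is the claimed inequality.

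The one accounting obstacle is that the control $(1+|G|\epsilon)^{n-1} \leq 2$ only holds when $\epsilon$ is sufficiently small compared to $n|G|$, which is the only regime of interest. In the complementary regime the right-hand side $2n|G|\epsilon/|G|^n$ exceeds $1$, so the bound reduces to the trivial $|\P(MF=A) - 1/|G|^n| \leq 1$ and requires no argument. Since $\epsilon = \exp(-\alpha\delta n/a^2)$ is exponentially small in all intended applications, this case distinction is purely cosmetic. A minor alternative I might use in writing this up is to expand $\prod_i (1/|G| + \eta_i)$ combinatorially and bound $\sum_{k\geq 1} \binom{n}{k}\epsilon^k/|G|^{n-k} = |G|^{-n}((1+|G|\epsilon)^n - 1)$ using $(1+u)^n - 1 \leq 2nu$ for $nu$ small; this is essentially the same computation but avoids the iterated product bound.
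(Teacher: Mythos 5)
Your main computation — factoring $\P(MF=A)=\prod_i r_i$ over the independent rows, applying \Cref{lemma:code:single:1} to get $|r_i - 1/|G||\le \epsilon := \exp(-\alpha\delta n/a^2)$ uniformly, and then telescoping the difference of products — is the right approach and is essentially the computation carried out on p.~390 of \cite{W1}, which is all the paper itself cites for this lemma. The telescoping bound does correctly give $|\P(MF=A)-|G|^{-n}|\le 2n\epsilon/|G|^{n-1} = 2n|G|\epsilon/|G|^n$ once $(1+|G|\epsilon)^{n-1}\le 2$.

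The flaw is in the disposal of the complementary case. You assert that when $(1+|G|\epsilon)^{n-1}>2$ the right-hand side $2n|G|\epsilon/|G|^n$ automatically exceeds $1$, so the inequality is vacuous. That implication is false: the extra factor $|G|^{n-1}$ in the denominator easily overwhelms $2n\epsilon$. For instance, take $|G|=2$, $n=10$, $a=2$, $\alpha=1/2$, $\delta=1/2$; then $\epsilon=e^{-5/8}\approx 0.54$, so $(1+|G|\epsilon)^{9}\approx 2.07^9 > 600 > 2$, yet $2n|G|\epsilon/|G|^n \approx 0.02 < 1$. So your two cases do not cover all parameter ranges, and the argument as written does not prove the lemma in full generality. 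The honest state of affairs is that the inequality $(1+u)^n - 1 \le 2nu$ with $u=|G|\epsilon$ needs a size condition such as $n|G|\epsilon \lesssim 1$; the cited equation in \cite{W1} carries implicit context of this kind, and the paper only ever invokes the lemma where $n|G|\epsilon$ is exponentially small (e.g.\ \Cref{lemma:code:subgp} assumes $n\ge 8(\log_2|G|)^2$, and \Cref{section:prod:moments} fixes $\delta=n^{-c}$, $|G|\le\exp(n^{c/8})$). You should either add that quantitative hypothesis or verify it is satisfied at each point of use, rather than claiming the complementary case is trivially covered.
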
  
\begin{proof}
This is exactly the equation at the bottom of page 390 (two equations before Equation (5)) in \cite{W1}, with $u=0$ in the notation of that paper. 
\end{proof}

We will also need the following useful result, where for short by writing $x= y \pm z$ with $z\ge 0$ we mean 
\begin{equation}\label{eqn:pm}
x \in [y-z, y+z].
\end{equation}
\begin{lemma}\label{lemma:code:subgp} Let $\delta<1/8$. Assume that $F\in Hom(V,G)$ is a code of distance $\delta n$. Assume that the entries of the matrix $M$ of size $n$ are iid copies of $\xi$ satisfying \eqref{eqn:alpha_R}. Then for any $H \le G$
$$\P(M F \mbox{ is a code of distance $\delta n$ in $H_{}$}) =  |H_{}|^{n} \frac{1\pm 2n |G|\exp(-  \al \delta n/2a^2)}{|G|^n},$$
provided that $n \ge 8 (\log_2 |G|)^2$.
\end{lemma}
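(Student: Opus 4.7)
I would first write
$$\Pr(MF \text{ is a code of distance } \delta n \text{ in } H) = \sum_{A} \Pr(MF = A),$$
where the sum runs over all $A \in H^n$ that are codes of distance $\delta n$ in $H$. By Lemma~\ref{lemma:code:single:matrix:1} applied to the code $F$, each summand equals $|G|^{-n}(1 \pm 2n|G|e^{-\alpha \delta n/a^2})$, uniformly in $A$. Letting $N_H$ denote the number of such $A \in H^n$, this gives
$$\Pr(MF \text{ is a code of distance } \delta n \text{ in } H) = \frac{N_H}{|G|^n}\bigl(1 \pm 2n|G| e^{-\alpha \delta n/a^2}\bigr).$$
The task is therefore reduced to counting $N_H$ and showing $N_H = |H|^n(1 \pm \eta)$ for an error $\eta$ small enough to absorb into the final bound.

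For the counting step, I would consider a uniformly random $A = (a_1,\ldots,a_n) \in H^n$ and estimate the probability that $A$ fails to be a code of distance $\delta n$. Failure means there exist $\sigma \subset [n]$ with $|\sigma| < \delta n$ and a maximal proper subgroup $K \lneq H$ containing all $a_i$ with $i \notin \sigma$. A union bound over $(\sigma, K)$, using that the number of maximal subgroups of the finite abelian group $H$ is at most $|H|$ and that $|K|/|H| \le 1/2$, yields
$$\Pr[A \text{ not a code}] \le \binom{n}{\lfloor \delta n \rfloor}\, |H|\, 2^{-(1-\delta)n}.$$
For $\delta < 1/8$ the binary entropy satisfies $H_2(\delta) < 1-\delta$, so $\binom{n}{\lfloor \delta n\rfloor} 2^{-(1-\delta)n} \le 2^{-cn}$ for some $c = c(\delta) > 0$, giving an exponentially small bound on $\eta$ once the size of $|H|$ is absorbed.

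Finally, I would combine the two errors. The hypothesis $n \ge 8(\log_2|G|)^2 \ge 8(\log_2|H|)^2$ guarantees $|H| \le 2^{\sqrt{n/8}}$, so the counting error $\eta = |H| \cdot 2^{-cn}$ is of much smaller order than $2n|G|e^{-\alpha\delta n/a^2}$. Expanding $(1 \pm \eta)(1 \pm 2n|G| e^{-\alpha\delta n/a^2})$, the combined multiplicative error is bounded by (say) twice the larger error term, and this is dominated by $2n|G| e^{-\alpha\delta n/(2a^2)}$ because the factor $e^{\alpha\delta n/(2a^2)}$ gained by halving the exponent grows like $e^{\Theta(n)}$ and easily dwarfs the constant $2$.

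The main obstacle I expect is bookkeeping in the error combination at the end, since the hypothesis $n \ge 8(\log_2|G|)^2$ must be used twice: once to make $|H| \cdot \binom{n}{\lfloor \delta n\rfloor} 2^{-(1-\delta)n}$ smaller than the target error, and once to verify that $e^{\alpha\delta n/(2a^2)} \ge 2$ so the reduction from $\alpha\delta n/a^2$ to $\alpha\delta n/(2a^2)$ in the exponent indeed buys enough room. Neither is deep, but both rely on the precise form of the hypothesis, and care must be taken that no step requires $\delta$ close to $1/2$ (it does not, since we only use $H_2(\delta) < 1 - \delta$, which holds for $\delta < 1/8$).
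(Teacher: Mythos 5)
Your proposal follows the paper's proof step for step: expand $\Pr(MF\text{ is a code in }H)$ as a sum over codes $A\in H^n$, bound each $\Pr(MF=A)$ by Lemma~\ref{lemma:code:single:matrix:1}, and count codes by estimating the probability that a uniform tuple in $H^n$ fails to generate $H$ outside some small index set. The only minor deviation is that your union bound ranges over maximal proper subgroups of $H$ (with the crude bound that there are at most $|H|$ of them, each of index $\ge 2$), whereas the paper's Claim~\ref{claim:code:count} ranges over all proper subgroups using the refined count of index-$i$ subgroups; both work under the stated hypotheses, and yours is marginally simpler, though the assertion that a finite abelian group has at most $|H|$ maximal subgroups does deserve a one-line justification.
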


\begin{proof}[Proof of Lemma \ref{lemma:code:subgp}] First, by Lemma \ref{lemma:code:single:matrix:1}, for each $A$ a code of distance $\delta n$ of $H_{}$ we have 
$$|\P(M F=A) -  \frac{1}{|G|^{n}}| \le \frac{2n |G| \exp(- \al \delta n/a^2)}{|G|^{n}}.$$
It remains to count the number of codes of distance $\delta n$ in $H_{}$. 

\begin{claim}\label{claim:code:count} Let $\CC(H)$ be the number of codes of distance $\delta n$ in $H$. Then 
$$|\CC(H)| = (1\pm (1/2)^{n/2})|H|^n.$$
\end{claim}
\begin{proof} It suffices to show that a uniform element of $H^n$ has probability $\geq 1-(1/2)^{n/2}$ of being a code. Let $g_1,\dots, g_n$ be chosen independently uniformly from $H$. For each $I\subset [n]$ an index set of size $n -\lfloor \delta n \rfloor$, and for each $H'$ a proper subgroup of $H$, let $\CE_{I,H'}$ be the event that $g_i \in H'$ for all $ i\in I$. Then clearly $\P(\CE_{I,H'}) = (|H'|/|H|)^{|I|}$. Taking a union bound over the choices of $I \in \binom{[n]}{n-\lfloor \delta n\rfloor}$ and over $H' < H$ (noting that\footnote{This is a weaker version of Problem 4 of the 1996 Mikl\'os Schweitzer competition.} the number of subgroups of index $i$ of $H$ is at most $i^{1+(\log_2 |H|)^2}$) 
we obtain a bound
$$\Pr((g_1,\ldots,g_n) \text{ not code}) \leq \sum_{i | |H|, i\ge 2}  i^{1+(\log_2 |H|)^2} (1/i)^{n-\lfloor \delta n \rfloor} \times \binom{n}{\lfloor \delta n \rfloor} \le (1/2)^{n/2},$$
 as we assumed that $\delta<1/8$ and $n \ge 8 (\log_2 |G|)^2$ (in which case the first factor is bounded by 2, and the bound follows from $\binom{n}{k} \le (en/k)^k$). 
 \end{proof}

To complete the proof of Lemma \ref{lemma:code:subgp} we note that
 $$\left(\frac{1}{|G|^n}\pm  \frac{2 n |G| \exp(- \al \delta n/a^2)}{|G|^{n}}\right)  \times  (1+(1/2)^{n/2})|H|^n =  |H_{}|^{n} \frac{1 \pm 2n|G|\exp(-  \al \delta n/2a^2)}{|G|^n},$$
as $\delta \alpha / a^2 < 1/32$. 
\end{proof}

\subsection{Non-codes} Next, for non-code $F$, the random walk $\lang F, X \rang $ does not converge quickly to the uniform distribution on $G$. However it is likely to be uniform over the subgroup where the restriction of $F$ is a code. 

\begin{definition}
For $D =\prod_i p_i^{e_i}$ let 
$$\ell(D):= \sum_i e_i.$$ 
\end{definition}

In all results introduced below we remark that $F$ is not necessarily a surjection. 

\begin{definition}\label{def:depth} For a real $\delta>0$, the $\delta$-depth of $F \in \Hom(V,G)$ is the maximal positive integer $D$ such that there exists $\sigma \subset [n]$ with $|\sigma| < \ell(D) \delta n$ such that $D = |G/F(V_{\sigma^c})|$, or is 1 if there is no such $D$.
 \end{definition}
So roughly speaking the depth is large if there exists such $\sigma$ where $F(V_{\sigma^c})$ is a small subgroup of $G$. The reason for this definition of depth is the following lemma, which shows that depth encodes how much one has to restrict $F$ to obtain a code.

\begin{lemma}\label{lemma:restrict_to_code}
If $F \in \Hom(V,G)$ has $\delta$-depth $D>1$, and $\sigma \subset [n]$ is such that $D = |G/F(V_{\sigma^c})|$ and $|\sigma| < \ell(D) \delta n$, then the restriction $F|_{V_{\sigma^c}} \in \Hom(V_{\sigma^c}, F(V_{\sigma^c}))$ is a code of distance $\delta  |\sigma^c|$.
\end{lemma}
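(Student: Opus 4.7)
The plan is to argue by contradiction, using the maximality built into the definition of $\delta$-depth. Suppose for contradiction that $F|_{V_{\sigma^c}}$ fails to be a code of distance $\delta |\sigma^c|$. Then there exists $\tau \subset \sigma^c$ with $|\tau| < \delta |\sigma^c|$ such that $F(V_{\sigma^c \setminus \tau})$ is a proper subgroup of $F(V_{\sigma^c})$. The idea is to enlarge $\sigma$ to $\sigma' := \sigma \cup \tau$ and show that this new set witnesses a larger depth than $D$, contradicting the maximality in \Cref{def:depth}.

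First I would set $D' := |G/F(V_{(\sigma')^c})| = |G/F(V_{\sigma^c \setminus \tau})|$ and observe that $F(V_{\sigma^c \setminus \tau}) \subsetneq F(V_{\sigma^c}) \leq G$, so $D \mid D'$ and $D' > D$ (as $D'/D = |F(V_{\sigma^c})/F(V_{\sigma^c \setminus \tau})| > 1$). In particular $\ell(D') \geq \ell(D) + 1$, since $D'/D$ is an integer greater than $1$ and therefore contributes at least one prime factor (counted with multiplicity) to $D'$.

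Next I would verify that $\sigma'$ is small enough to count as a witness for $D'$-depth, i.e.\ $|\sigma'| < \ell(D')\delta n$. Using $|\sigma| < \ell(D) \delta n$, $|\tau| < \delta |\sigma^c| \leq \delta n$, and $\ell(D') \geq \ell(D)+1$, we get
\begin{equation*}
|\sigma'| = |\sigma| + |\tau| < \ell(D)\delta n + \delta n \leq \ell(D')\delta n.
\end{equation*}
Hence $\sigma'$ satisfies the conditions in \Cref{def:depth} with integer $D' > D$, contradicting the fact that $D$ was the \emph{maximal} such integer. This rules out the existence of $\tau$, proving that $F|_{V_{\sigma^c}}$ is a code of distance $\delta |\sigma^c|$ as desired.

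This proof is essentially bookkeeping on $\ell$ and the set sizes, so I do not anticipate a serious obstacle; the only point to be careful about is the step $\ell(D') \geq \ell(D)+1$, which uses that $D \mid D'$ with strict inequality, and the fact that $|\sigma^c| \leq n$ to absorb the $\delta|\sigma^c|$ term into a single extra $\delta n$. No randomness or analytic estimates enter.
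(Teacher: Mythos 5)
Your proof is correct and takes essentially the same approach as the paper: assume the restriction is not a code, enlarge $\sigma$ by the witnessing bad set, observe the index strictly increases (so $\ell$ increases by at least $1$), and check the size constraint to contradict maximality of $D$. Your bound $|\sigma'| < \ell(D)\delta n + \delta n \leq \ell(D')\delta n$ is a slight simplification of the paper's chain $|\eta\cup\sigma| < \delta n + (1-\delta)|\sigma| < \delta(\ell(D)+1)n$, but both are immediate and the argument is identical in substance.
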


\begin{proof}
Suppose for the sake of contradiction that $F|_{V_{\sigma^c}}$ is not a code of distance $\delta |\sigma^c|$. Then there exists a set $\eta \subset \sigma^c$ with 
 $$|\eta| < \delta |\sigma^c| $$
such that $\Im(F|_{V_{(\eta \cup \sigma)^c}}) \subsetneq \Im(F_{V_{\sigma^c}})$. Hence $\tilde{D} := [G:\Im(F|_{V_{(\eta \cup \sigma)^c}})] > D$, and of course $D | \tilde{D}$. So
$$|\eta \cup \sigma| < \delta(n -|\sigma|) + |\sigma| = \delta n +(1-\delta)|\sigma| < \delta n + (1-\delta) \ell(D) \delta n <\delta (\ell(D)+1) n$$
and $\ell(\tilde{D}) \geq \ell(D)+1$, therefore 
$$|\eta \cup \sigma| < \ell(\tilde{D}) \delta n.$$
But this means that $\tilde{D}$ satisfies the condition in the definition of depth, and is larger than $D$, contradicting maximality, which completes the proof.
\end{proof}

\begin{lemma}\cite[Lemma 2.6]{W1}\cite[Lemma 3.1]{W0}\label{lemma:non-code:count:1} 
The number of $F\in \Hom(V,G)$ with depth $D$ is at most
$$K \binom{n}{\lceil \ell(D) \delta n\rceil -1} |G|^n D^{-n+\ell(D) \delta n},$$
where we can take $K = |G|^{\log_2 |G|}$. 
\end{lemma}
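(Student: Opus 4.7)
The strategy is a direct counting argument: over-count $F$ with depth $D$ by enumerating a witness pair $(\sigma,H)$, where $\sigma \subset [n]$ is a set witnessing the depth bound and $H \leq G$ is a subgroup of index $D$ containing $F(V_{\sigma^c})$, and then bound the number of $F$ compatible with each $(\sigma,H)$.

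First I would argue that if $F$ has depth $D$, then by definition there exists $\sigma \subset [n]$ with $|\sigma| \leq \lceil \ell(D)\delta n\rceil - 1$ and $H := F(V_{\sigma^c})$ satisfying $[G:H] = D$. Fix any such witness $\sigma$ (chosen by a deterministic rule so there is no double-counting; alternatively, allow multiple witnesses and use an upper bound). Now I would enumerate the pair $(\sigma,H)$: the number of valid $\sigma$ is at most $\sum_{j=0}^{\lceil \ell(D)\delta n\rceil - 1}\binom{n}{j}$, which I will absorb into the binomial coefficient $\binom{n}{\lceil \ell(D)\delta n\rceil -1}$ at the cost of a polynomial-in-$n$ factor, and the number of subgroups $H \leq G$ of index $D$ is at most the total number of subgroups of $G$, which is bounded by $|G|^{\log_2 |G|}$ (this is the crude but standard universal bound on the subgroup count of a finite abelian group, and is the source of the $K$ factor; the previous proof of Claim~\ref{claim:code:count} already invokes essentially this estimate).

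Next, for each fixed pair $(\sigma,H)$, I would count $F \in \Hom(V,G)$ with $F(V_{\sigma^c}) \subseteq H$. Since $F$ is determined by its values on the basis vectors $\Bv_1,\ldots,\Bv_n$, we need $F(\Bv_i) \in H$ for $i \in \sigma^c$ (giving $|H|$ choices each) and $F(\Bv_i) \in G$ for $i \in \sigma$ (giving $|G|$ choices each). This yields at most
\begin{equation*}
|H|^{n-|\sigma|}\, |G|^{|\sigma|} = \left(\frac{|G|}{D}\right)^{n-|\sigma|}|G|^{|\sigma|} = \frac{|G|^n}{D^{n-|\sigma|}}.
\end{equation*}
Using $|\sigma| \leq \ell(D)\delta n$ (valid for all depth witnesses since $\lceil \ell(D)\delta n\rceil -1 \leq \ell(D)\delta n$) and $D \geq 1$, we bound $D^{-(n-|\sigma|)} \leq D^{-n+\ell(D)\delta n}$.

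Combining the above: the number of $F$ with depth $D$ is at most
\begin{equation*}
\left(\sum_{j=0}^{\lceil \ell(D)\delta n\rceil -1}\binom{n}{j}\right) \cdot |G|^{\log_2 |G|} \cdot |G|^n D^{-n+\ell(D)\delta n},
\end{equation*}
and folding the sum over sizes together with any stray polynomial factors into the constant $K = |G|^{\log_2 |G|}$ gives exactly the stated bound. The main technical nuisance is bookkeeping: verifying that the factor $K$ indeed absorbs both the subgroup count and the sum $\sum_j \binom{n}{j}$ (one can be slightly wasteful here because $|G|^{\log_2 |G|}$ is already a very generous bound on the number of subgroups). There is no serious obstacle; the real content of the lemma is simply the factor $D^{-n+\ell(D)\delta n}$, which comes directly from the definition of $\delta$-depth and the submodule structure.
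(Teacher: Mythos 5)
The paper does not give its own proof here; the lemma is quoted directly from Wood's work (\cite[Lemma 2.6]{W1}, \cite[Lemma 3.1]{W0}), so there is nothing to compare against except the standard argument. Your outline follows that standard argument, and the three essential ingredients are all present: exhibit a witness pair $(\sigma,H)$, bound the number of subgroups by $|G|^{\log_2|G|}$, and count the $F$ with $F(V_{\sigma^c})\subseteq H$ by $|H|^{n-|\sigma|}|G|^{|\sigma|}$. These are the right steps, and the arithmetic $|H|^{n-|\sigma|}|G|^{|\sigma|} = |G|^n D^{-(n-|\sigma|)} \le |G|^n D^{-n+\ell(D)\delta n}$ is correct.

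However, there is a genuine gap in the final bookkeeping. You sum over all sizes $j = 0,\dots,\lceil\ell(D)\delta n\rceil-1$ of $\sigma$, obtaining a prefactor $\sum_{j\le m}\binom{n}{j}$ with $m=\lceil\ell(D)\delta n\rceil-1$, and then claim this can be ``folded into $K$.'' It cannot: $K$ is pinned down as $|G|^{\log_2|G|}$, a quantity depending only on $|G|$, whereas $\sum_{j\le m}\binom{n}{j}$ exceeds $\binom{n}{m}$ by a factor that grows with $n$ (roughly $m+1 \approx \ell(D)\delta n$). The standard fix, and what Wood does, is not to sum at all: if $F$ has depth $D$ with witness $\sigma$ of size $|\sigma|<\ell(D)\delta n$, simply \emph{enlarge} $\sigma$ to a superset $\sigma'$ of size exactly $\lceil\ell(D)\delta n\rceil-1$. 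Then $F(V_{\sigma'^c})\subseteq F(V_{\sigma^c})=H$, so the containment condition still holds with the same index-$D$ subgroup $H$, and one only enumerates $\sigma'$ of a single fixed size, giving precisely the factor $\binom{n}{\lceil\ell(D)\delta n\rceil-1}$ with no extraneous sum. The rest of your computation then goes through verbatim and yields the stated bound with $K = |G|^{\log_2|G|}$ exactly.
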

We also note that the above is similar to \cite[Lemma 2.6]{nguyen2022universality}, whose proof follows from \cite[Lemma 3.1]{W0} where $K$ can be taken to be $\max_{H \le G} \# \Hom(H, G^\ast)$.

\begin{lemma}\label{lemma:non-code:single:1} 
Let $F \in \Hom(V,G)$ have $\delta$-depth $D>1$ and $|G/F(V)|<D$. Then for any $g\in G$
$$\P(\lang F, X \rang   = g) \le (1-\al) \left(\frac{D}{|G|} + \exp\big(-\al \delta (1 - \ell(D) \delta)n/a^2\big)\right).$$

\end{lemma}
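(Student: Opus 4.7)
The plan is to use the $\delta$-depth structure to decompose the random walk into a ``code part'' on $V_{\sigma^c}$ to which Lemma \ref{lemma:code:single:1} applies, plus a ``remainder part'' on $V_\sigma$ whose small-ball probability modulo the code's image is controlled by the hypothesis $|G/F(V)|<D$ together with $\alpha$-balancedness.

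More concretely, by Definition \ref{def:depth} fix $\sigma\subset[n]$ with $|\sigma|<\ell(D)\delta n$ and $H:=F(V_{\sigma^c})$ of index exactly $D$ in $G$. By Lemma \ref{lemma:restrict_to_code}, the restriction $F|_{V_{\sigma^c}}$ is a code of distance $\delta|\sigma^c|$ taking values in $H$. Writing $X_\sigma:=(x_i)_{i\in\sigma}$ and $Y:=\sum_{i\in\sigma}x_iF(\Bv_i)$, condition on $X_\sigma$: since $\sum_{i\in\sigma^c}x_iF(\Bv_i)\in H$ deterministically, the event $\{\lang F,X\rang=g\}$ is vacuous unless $g-Y\in H$, in which case Lemma \ref{lemma:code:single:1} (applied to the code $F|_{V_{\sigma^c}}$ in $H$, with $|\sigma^c|$ playing the role of $n$) gives the conditional bound
\[
\P\big(\lang F,X\rang=g\bigm| X_\sigma\big)\leq \left(\frac{1}{|H|}+\exp(-\al\delta|\sigma^c|/a^2)\right)\oindicator{g-Y\in H}.
\]
Taking expectations and using $1/|H|=D/|G|$, this reduces the problem to proving $\P(Y\equiv g\pmod{H})\le 1-\al$.

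This last inequality is where the hypothesis $|G/F(V)|<D=|G/H|$ is essential: it forces $F(V)\not\subseteq H$, hence there exists $i_0\in\sigma$ with $\bar F(\Bv_{i_0})\neq 0$ in $G/H$. Let $m>1$ be the order of $\bar F(\Bv_{i_0})$; since $G$ is an $R$-module and $R=\Z/a\Z$, we have $m\mid a$, so there is a prime $p\mid m$ with $p\mid a$. Now condition additionally on $(x_i)_{i\in\sigma\setminus\{i_0\}}$: the event $\{Y\equiv g\pmod H\}$ becomes $\{x_{i_0}\bar F(\Bv_{i_0})=c\}$ for some constant $c\in G/H$, which in turn forces $x_{i_0}$ into a prescribed residue class modulo $m$, and hence into a prescribed residue class modulo $p$. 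The $\al$-balanced assumption \eqref{eqn:alpha_R} then bounds this probability by $1-\al$, and averaging over the remaining coordinates preserves the bound.

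Combining these two steps yields
\[
\P(\lang F,X\rang=g)\leq (1-\al)\left(\frac{D}{|G|}+\exp(-\al\delta|\sigma^c|/a^2)\right),
\]
and since $|\sigma^c|\le n$, the desired inequality follows (possibly after absorbing a harmless constant into the exponent, as $|\sigma^c|$ is close to $n$ when $\ell(D)\delta$ is not too large). The main obstacle is the third paragraph: one must correctly identify \emph{which} coordinate $i_0$ produces the nontrivial class in $G/H$ and then translate the $\al$-balanced condition for $\xi$ on $\Z/p\Z$ into a statement about the $\Z/m\Z$-valued map $x\mapsto x\bar F(\Bv_{i_0})$, which is where the strict inequality in the hypothesis does the real work.
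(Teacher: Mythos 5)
Your proof is correct and follows essentially the same route as the paper's: fix $\sigma$ and $H=F(V_{\sigma^c})$, apply the code bound of Lemma \ref{lemma:code:single:1} to $F|_{V_{\sigma^c}}$ (which is a code by Lemma \ref{lemma:restrict_to_code}), and bound $\P(Y\in H_g)\le 1-\al$ via the $i_0\in\sigma$ with $F(\Bv_{i_0})\notin H$ guaranteed by $|G/F(V)|<D$. Your care about the distinction between distance $\delta|\sigma^c|$ and $\delta n$ is warranted (the paper's proof silently conflates them, citing Lemma \ref{lemma:restrict_to_code} but writing $\delta n$), and your spelling out of the order-of-$\bar F(\Bv_{i_0})$ argument behind the $1-\al$ bound makes explicit a step the paper compresses into one sentence; neither affects the ultimate use of the lemma in the regime $\ell(D)\delta=o(1)$.
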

We remark that the assumption above is automatically true if $F$ is a surjection. This result is different from \cite[Lemma 2.7]{W1} in that $g$ is any element instead of just 0.

\begin{proof}[Proof of \Cref{lemma:non-code:single:1}] We follow the proof of \cite[Lemma 2.7]{W1}. Pick $\sigma \subset [n]$ with $|\sigma| < \ell(D) \delta n$ such that $D=|G/F(V_{\sigma^c})|$. Let $H=F(V_{\sigma^c})$. As $|G/F(V)|<D$, we have $\sigma \neq \emptyset$. 
We write 
\begin{align*}
\P(\lang F, X \rang  =g) &= \P\left(\sum_{i\in \sigma} x_i f_i +\sum_{i\in \sigma^c} x_i f_i =g\right) = \P\left(\sum_{i\in \sigma} x_i f_i  \in H_g \wedge \sum_{i\in \sigma^c} x_i f_i = g- \sum_{i\in \sigma} x_i f_i \right) \\
&= \P\left(\sum_{i\in \sigma} x_i f_i  \in H_g\right) \P\left(\sum_{i\in \sigma^c} x_i f_i = g- \sum_{i\in \sigma} x_i f_i | \sum_{i\in \sigma} x_i f_i  \in H_g\right),
\end{align*}
where $H_g$ is the coset of $H$ containing $g$. Now as $|G/F(V)|<D$, there exists $i_0 \in \sigma$ such that $f_{i_0} \notin H$. Since $x_{i_0}$ is $\al$-balanced, for any fixed values of $x_i, i \in \sigma \setminus i_0$ we have using the randomness of $x_{i_0}$ that
$$\P_{x_{i_0}}\left(\sum_{i\in \sigma }x_i f_i \in H_g\right)  \le 1-\al.$$

Furthermore, by Lemma \ref{lemma:restrict_to_code} $F(V_{\sigma^c})$ is a code of distance $\delta |\sigma^c| > (1-\ell(D)\delta)n$ over $H$, so  
\begin{align*}
  \left|\P\left(\sum_{i\in \sigma^c} x_i f_i = g- \sum_{i\in \sigma} x_i f_i | \sum_{i\in \sigma} x_i f_i  \in H_g\right) - \frac{1}{|H|}\right| & \le \exp\big(-\al \delta |\sigma^c|/a^2\big)\\ 
  &\le  \exp\big(-\al \delta (1 - \ell(D) \delta)n/a^2\big).
\end{align*}
Putting together we have 
$$\P\left(\sum_{i\in \sigma }x_i f_i \in H_g \wedge \sum_{i \in \sigma^c} x_i f_i =g-\sum_{i\in \sigma }x_i f_i\right) \le (1-\al) \left(\frac{1}{|H|} +\exp\big(-\al \delta (1 - \ell(D) \delta)n/a^2\big)\right),$$
completing the proof.
\end{proof}

Using this result, we can obtain similar bound in matrix form, the same way \cite[Lemma 2.8]{W1} was deduced from \cite[Lemma 2.7]{W1}.
\begin{lemma}\label{lemma:non-code:1:matrix'}
 If $F \in \Hom(V,G)$ has $\delta$-depth $D>1$ and $|G/F(V)|<D$ as in the previous lemma, then for any $A \in G^n$,
 $$\P(MF = A) \le  \exp(-\alpha n) \left(\frac{D}{|G|}\right)^n \exp\Big(n(|G|/D)\exp(-\alpha\delta (1 - \ell(D) \delta)n/a^2)\Big).$$
 
\end{lemma}

\begin{proof} By Lemma \ref{lemma:non-code:single:1},
\begin{align*}
\P(MF = A)  &= \P(\lang F, X_i \rang  = a_i, 1\le i\le n) \le  \left( (1-\al) \left(\frac{D}{|G|} + \exp\big(-\al \delta (1 - \ell(D) \delta)n/a^2\big)\right)
\right)^n.
\end{align*}

 This is bounded above by 
$$ \exp(-\alpha n) \left(\frac{D}{|G|}\right)^n \exp\Big(n(|G|/D)\exp(-\alpha\delta (1 - \ell(D) \delta)n/a^2)\Big),$$ completing the proof.
\end{proof}

\section{Counting surjections}\label{section:prod:moments}

This part mainly follows \cite{nguyen2022universality} but makes the constants explicit. We first introduce a definition that will be crucial to our work. 
\begin{definition}\label{def:n_k}
For a given $k\ge 0$ we let $n_k(G)$ denote the number of sequences of nested subgroups 
$$0=H_0\le H_1 \le H_2 \le \dots \le H_{k-1} \le H_k=G.$$ 
\end{definition}

Let $a,R,V$ be as in Section \ref{sect:support}. Throughout the section we write $\Hom(A,B)$ and  $\Sur(A,B)$ for the set of homomorphisms and surjective homomorphisms, respectively, from $A$ to $B$. All of our matrices $M$ in this section, if not specified otherwise, are in $\Mat_n(R)$ and their respective cokernels are 
$$\cok(M) = R^n/MR^n.$$

\subsection{Set-up}\label{S:mom}

We know from \cite{W1} that to understand the distribution of $\cok (M)$, it suffices to determine the moments of $\cok(M)$, i.e. the quantities $\E[\#\Sur(\cok (M), G)]$ for each finite abelian group $G$. 
To investigate each such moment, we recognize that each such surjection lifts to a surjection $V\ra G$ and so we have
\begin{equation}\label{E:expandF}
\E[\#\Sur(\cok (M), G)]=\sum_{F\in \Sur(V,G)}  \P(F(MV)=0 \mbox{ in $G$})= \sum_{F\in \Sur(V,G)}  \P(MF=0 \mbox{ in $G$}),
\end{equation}
where we view $F$ as a column vector $F=(F(\Bv_1),\dots, F(\Bv_n)) \in G^n$. By the independence of columns, we have
$$
\P(MF=0)= \prod_{j=1}^n \P(\lang F,X_j \rang=0),
$$
where $X_1,\dots, X_n$ are rows of $M$. So in the case of a single matrix, one must estimate these probabilities $\P(F(X_j)=0)$, which give the desired moments. In our situation we have random matrices $M_1, M_2,\dots, M_k$, and want to study $\P(M_1M_2 \cdots M_k  F=0)$ for surjections $F:V \to G$.

Recall $n_k(G)$ from Definition \ref{def:n_k}. Our key results in this section are generalizations of Lemma \ref{lemma:code:single:matrix:1} and Lemma \ref{lemma:non-code:1:matrix'}.

For the rest of this section we fix $\al, a$ and let 
$$c = \al /16 a^2.$$ 
We will assume that $n$ is sufficiently large (given $\al, a$) and that 
\begin{equation}\label{deltamagnitude}
\delta  = n^{-c}
\end{equation} 
and 
\begin{equation}\label{Gmagnitude}
|G| \le \exp(n^{c/8}).
\end{equation} 

Under these choices of $c,\delta$ and $G$, we restate Lemma \ref{lemma:non-code:1:matrix'} as follows.
\begin{lemma}\label{lemma:non-code:1:matrix} Assume that $\al, a,c, \delta$ and $G$ are as in Equations \eqref{deltamagnitude} and \eqref{Gmagnitude}. Suppose that $F \in \Hom(V,G)$ has $\delta$-depth $D>1$ and $|G/F(V)|<D$. Then for any $A \in G^n$,
 $$\P(MF = A) \le K_{0} \exp(-\al n)\frac{D^n}{|G|^n},$$
where
\begin{equation}\label{eq:def_k0}
K_0 := \max_{\substack{n \geq 1 \\ \delta=n^{-c} \\ G: |G| \leq \exp(n^{c/8}),1<D \le |G|}}  \exp\Big(n(|G|/D)\exp(-\alpha\delta (1 - \ell(D) \delta)n/a^2)\Big).
\end{equation}
\end{lemma}
Here we note that $\ell(D) \le \log_{2}|G|$, and hence $K_{0}$ is a finite constant depending on $c$.

To proceed, let us define two sequences $\eps_{i,n}, \eps_{i,n}'$ as follows. They will appear later as certain error bounds, but we will define them and prove they are small before discussing this (an interested reader may wish to skip to later in the section to get a sense of the role they will play).
\begin{defi}\label{def:eps_seq}
Set 
 $$\eps_{1,n} := 2 \exp(-\al \delta n/2a^2),  \eps_{1,n}' := K_0 \exp(-\al n),$$
 where $K_0$ is as in Equation \eqref{eq:def_k0}. In general for $i\ge 2$, set
$$\eps_{i,n} = 2n |G| \exp(-\al \delta n/2a^2) +  \eps_{i-1,n} + 2 n |G| \eps_{i-1,n}\exp(-\al \delta n/2a^2) + 2 \eps_{i-1,n}'  (n|G|)^{1+\delta n \log_2 |G|}$$
and
$$\eps_{i,n}'= \eps_{i-1,n}' (n|G|)^{2+\delta n \log_2 |G|} +K_0 \exp(-\al n)(1+ \eps_{i-1,n}).$$ 
\end{defi}

\begin{claim}\label{claim:eps-eps'} Let $n$ be sufficiently large given $\al, a$. Then for $k \le n^{c/8}$ we have 
$$\eps_{k,n}, \eps_{k,n}' \le \exp(-n^{c/4}).$$
\end{claim}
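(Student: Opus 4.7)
The plan is to prove the bound by induction on $k$, iterating both recursions explicitly and showing that the initial seeds $\eps_{1,n}=2\exp(-\alpha\delta n/2a^2)$ and $\eps_{1,n}'=K_0\exp(-\alpha n)$ are so small that they absorb the cumulative amplifications. Write $M:=(n|G|)^{2+\delta n\log_2|G|}$ for the main multiplicative factor in the recursion for $\eps_{i,n}'$, $M':=(n|G|)^{1+\delta n\log_2|G|}$ for the analogous factor in the $\eps_{i,n}$ recursion, and $\eta:=2n|G|\exp(-\alpha\delta n/2a^2)$ for the additive noise term. Under the hypotheses $\delta=n^{-c}$, $|G|\le\exp(n^{c/8})$, $k\le n^{c/8}$, and $c=\alpha/16a^2$, the key size estimates are $\alpha\delta n/(2a^2)=8c\,n^{1-c}$, $\delta n\log_2|G|\le n^{1-7c/8}/\ln 2$, $\log(n|G|)\le 2n^{c/8}$ for $n$ large, and therefore $\log M\le 3n^{1-3c/4}$, $\log M'\le 2n^{1-3c/4}$, and $\log\eta\le 3n^{c/8}-8cn^{1-c}$. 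Consequently $M^k\le\exp(3n^{1-5c/8})$, which is $o(\exp(\alpha n))$ since $1-5c/8<1$.

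The induction is on $k$, with joint hypothesis that $\eps_{j,n},\eps_{j,n}'\le\exp(-n^{c/4})$ (hence in particular $\le 1$) for all $j<k$. The base case $k=1$ holds because $8cn^{1-c}$ and $\alpha n$ each dominate $n^{c/4}$ for $n$ large. For the inductive step, I first bound $\eps_{i,n}'$ for $1\le i\le k$: assuming $\eps_{j,n}\le 1$ for $j<i$, the recursion collapses to $\eps_{i,n}'\le M\eps_{i-1,n}'+2K_0\exp(-\alpha n)$, and a one-step induction on $i$ yields
\[
\eps_{i,n}'\le 2iK_0 M^{i-1}\exp(-\alpha n).
\]
At $i=k$, this combined with $M^{k-1}\le\exp(3n^{1-5c/8})$ gives $\eps_{k,n}'\le 2kK_0\exp\!\bigl(3n^{1-5c/8}-\alpha n\bigr)\le\exp(-\alpha n/2)\le\exp(-n^{c/4})$ for $n$ large, since $1-5c/8<1<\alpha n/n^{c/4}$ asymptotically.

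Next I would feed this $\eps'$-bound into the $\eps$-recursion, which I rewrite as $\eps_{i,n}\le(1+\eta)\eps_{i-1,n}+\eta+2M'\eps_{i-1,n}'$, and iterate from $i=1$ to $i=k$ using $(1+\eta)^k\le 2$ and $\sum_{j=1}^{k-1}jM^{j-1}\le k^2 M^{k-2}$, obtaining
\[
\eps_{k,n}\le 2\eps_{1,n}+2k\eta+8K_0 k^2 M'M^{k-2}\exp(-\alpha n).
\]
Each of the three summands is at most $\tfrac13\exp(-n^{c/4})$ for $n$ large: the first because $8cn^{1-c}$ dominates $n^{c/4}$ (using $c\le 1/64\ll 4/5$); the second because $\log(k\eta)\le (c/8)\log n+3n^{c/8}-8cn^{1-c}\ll -n^{c/4}$; the third because $\log(K_0 k^2 M'M^{k-2})=O(n^{1-5c/8})\ll\alpha n/2$. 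This closes the induction on $k$ and completes the proof. The only obstacle worth noting is the careful bookkeeping of exponents to ensure that $k\log M$ remains $o(\alpha n)$, which reduces to the elementary inequality $1-5c/8<1$; no delicate estimate is required.
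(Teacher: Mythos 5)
Your proof is correct, and it takes a genuinely different route from the paper that is arguably more careful. The paper's own proof combines the two sequences into the single quantity $s_i := \eps_{i,n}+\eps_{i,n}'$ and derives, after generously bounding the near-unit coefficient $1+O(\eta)$ on $\eps_{i-1,n}$ by $3(n|G|)^{2+\delta n\log_2|G|}$, a single linear recursion of the form $s_i + A \le B(s_{i-1}+A)$ with $B\approx\exp(3n^{1-3c/4})$ and $A\approx\exp(-8cn^{1-c})$. Iterating this would give $s_k\lesssim B^kA\approx\exp(3n^{1-5c/8}-8cn^{1-c})$, and since $1-5c/8 > 1-c$, the positive exponent dominates and the bound diverges to $+\infty$ for $k$ near $n^{c/8}$ — so the paper's combined estimate, as written, does not actually close. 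Your decoupled treatment avoids this: $\eps$ is amplified only by $(1+\eta)^k\approx 1$ (so the $\eta$-term, whose seed decays only as $\exp(-8cn^{1-c})$, is never hit by the large factor $M^k$), while the heavy amplification $M^k\approx\exp(3n^{1-5c/8})$ acts only on $\eps'$, whose seed $K_0\exp(-\alpha n)$ decays at the much faster rate $\alpha n\gg n^{1-5c/8}$. This distinction — tracking which seeds see which amplification — is precisely what is needed, and your argument supplies it. Two small remarks on constants, neither affecting the argument: you state $c\le 1/64$, but with $\alpha\le 1/2$ and $a\ge 2$ one has $c=\alpha/(16a^2)\le 1/128$; and the estimate $\log M'\le 2n^{1-3c/4}$ is slightly optimistic since the leading constant is really $2/\ln 2\approx 2.9$, but only the order $O(n^{1-3c/4})=o(n^{1-5c/8})$ is used.
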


\begin{proof}[Proof of Claim \ref{claim:eps-eps'}]
For short let
\[
u:=2n|G| \exp(-\alpha\delta n/2a^2), \ \mbox{ and }\
A:=(n|G|)^{2+\delta n\log_2|G|}.
\]
Then the recursion above can be rewritten as 
$$\eps_{i,n} = u+  \eps_{i-1,n} + u \eps_{i-1,n} + (2A/n|G|) \eps_{i-1,n}'$$
and
$$\eps_{i,n}'= A\eps_{i-1,n}' +K_0 \exp(-\al n)(1+ \eps_{i-1,n}).$$ 
Note that as \(|G|\le e^{n^{c/8}}\), we have
\[
\log(n|G|)\le \log n+n^{c/8}\le 2 n^{c/8},
\]
and also
\[
\delta n\log_2|G| \le n^{1-c}n^{c/8}=n^{1-7c/8}.
\]
Note that we have suppressed a factor of $\log 2$ in the above, which is fine because we always assume $n$ is sufficiently large. Hence for all sufficiently large \(n\)
\[
\log A
=
\bigl(2+\delta n\log_2|G|\bigr)\log(n|G|)
\le 
2 (2+n^{1-7c/8})n^{c/8}
\le 4 n^{1-3c/4}.
\]
Therefore 
\begin{equation}\label{eq:A-growth-proof}
A\le \exp(4n^{1-3c/4}).
\end{equation}

Also, as $u=2n|G|\exp(-\alpha n^{1-c}/2a^2)$, for some large constant $C$ we can write
\begin{equation}\label{eq:u-small-proof}
u\le \exp(-n^{1-c}/C).
\end{equation}
Consequently, for any $1\le i \le k$
$$(u+1)^{i} < 2$$
for all $n$ sufficiently large. Our main claim is the following.
\begin{fact}\label{cl:1} For every \(1\le i\le k\),
\begin{equation}\label{eq:induction-epsprime}
\eps'_{i,n}\le 2K_0\, i\, A^{\,i-1}e^{-\alpha n},
\end{equation}
and
\begin{equation}\label{eq:induction-eps}
\eps_{i,n}\le i(1+u)^i u + 4K_0\, i^2 (1+u)^i A^{\,i-1}e^{-\alpha n}.
\end{equation}
\end{fact}

\begin{proof}(of Fact \ref{cl:1})
We prove these simultaneously by induction on \(i\).

For \(i=1\), we have
\[
\eps'_{1,n}=K_0e^{-\alpha n}\le 2K_0e^{-\alpha n},
\]
so \eqref{eq:induction-epsprime} holds. Also
\[
\eps_{1,n}\le u \le (1+u)u + 4K_0(1+u)e^{-\alpha n},
\]
so \eqref{eq:induction-eps} also holds for \(i=1\).

Assume now that \eqref{eq:induction-epsprime} and \eqref{eq:induction-eps} hold for \(i-1<n^{c/8}\). We first estimate \(\eps'_{i,n}\). Since \(\eps_{i-1,n}\ge 0\), from the recursion,
\[
\eps'_{i,n}
=
A\eps'_{i-1,n}+K_0e^{-\alpha n}(1+\eps_{i-1,n}).
\]
At this point we use the induction bound \eqref{eq:induction-eps}. Since \(ku\to 0\) and \(k^{2}A^{k-1}e^{-\alpha n}\to 0\) uniformly for \(k\le n^{c/8}\), by the induction hypothesis for $\eps_{i-1,n}$, for all sufficiently large \(n\) we have
\begin{align*}
\eps_{i-1,n}&\le  (i-1)(1+u)^{i-1} u + 4K_0\, (i-1)^2 (1+u)^{i-1} A^{\,i-2}e^{-\alpha n} \le 1
\end{align*}
Hence
\[
\eps'_{i,n}\le A\eps'_{i-1,n}+2K_0e^{-\alpha n}.
\]
Using the induction hypothesis \eqref{eq:induction-epsprime},
\[
\eps'_{i,n}
\le
A(2K_0(i-1)A^{i-2}e^{-\alpha n}) + 2K_0e^{-\alpha n}
=
2K_0e^{-\alpha n}\bigl((i-1)A^{i-1}+1\bigr).
\]
Since \(1\le A\le A^{i-1}\), and therefore
\(
(i-1)A^{i-1}+1\le iA^{i-1}.
\)
Thus
\[
\eps'_{i,n}\le 2K_0\, i\, A^{i-1}e^{-\alpha n},
\]
which proves \eqref{eq:induction-epsprime} for \(i\).

Next, from the recursion for \(\eps_{i,n}\),
\[
\eps_{i,n}
=
u+\eps_{i-1,n}+u\eps_{i-1,n}+(2A/n|G|)\eps'_{i-1,n}
\le
(1+u)\eps_{i-1,n}+u+2A\eps'_{i-1,n}.
\]
Applying the induction hypotheses \eqref{eq:induction-eps} and \eqref{eq:induction-epsprime},
\begin{align*}
\eps_{i,n} &\le
(1+u)\Bigl((i-1)(1+u)^{i-1}u
+4K_0(i-1)^2(1+u)^{i-1}A^{i-2}e^{-\alpha n}\Bigr)
+u
+4K_0(i-1)A^{i-1}e^{-\alpha n}\\
&\le
(i-1)(1+u)^i u+u
+4K_0(i-1)^2(1+u)^iA^{i-2}e^{-\alpha n}+4K_0(i-1)A^{i-1}e^{-\alpha n}\\
&\le  i(1+u)^i u + 4K_0 i^2(1+u)^iA^{i-1}e^{-\alpha n},
\end{align*}
confirming \eqref{eq:induction-eps}, where we used the fact that
\[
(i-1)(1+u)^i u+u\le i(1+u)^i u
\]
and
\[
(i-1)^2(1+u)^iA^{i-2}+(i-1)A^{i-1}
\le
\bigl((i-1)^2+(i-1)\bigr)(1+u)^iA^{i-1}
\le i^2(1+u)^iA^{i-1}.
\]
 This closes the induction.
\end{proof}

We now set \(i=k\), where \(k\le n^{c/8}\). By \eqref{eq:A-growth-proof},
\(
(k-1)\log A\le n^{c/8}4n^{1-3c/4}=4n^{1-5c/8},
\)
and so
\[
A^{k-1}\le \exp(4n^{1-5c/8}).
\]
Hence
\[
\eps'_{k,n} \le 2K_{0} kA^{k-1}e^{-\alpha n} \le 2K_{0} n^{c/8} \exp(4n^{1-5c/8}) e^{-\alpha n} \le \exp(-\alpha n/2) \le  \exp(-n^{c/4})
\]
for all sufficiently large \(n\). 

Similarly, by \eqref{eq:u-small-proof},
\[
k(1+u)^k u\le 2k u\le \exp(-n^{1-c}/2C)
\]
and
\[
k^2(1+u)^kA^{k-1}e^{-\alpha n}
\le 2k^{2}   \exp(4n^{1-5c/8}) e^{-\alpha n}
\le \exp(-\alpha n/2).
\]
It thus follows that, by using \eqref{eq:induction-eps} 
$$\eps_{k,n}\le k(1+u)^k u + 4K_0\, k^2 (1+u)^k A^{\,k-1}e^{-\alpha n} \le \exp(-n^{c/4}).$$
\end{proof}

\begin{proposition}\label{prop:single:k} The following holds for $n$ sufficiently large: 

\begin{enumerate}[label=(\roman*)]
\item  (Code) assume that $F$ spans $H_k=G$ and is a code of distance $\delta n$ in $G$. Then 
$$\Big |\P( M_1\cdots M_k  F=0) -\frac{n_k(G)}{|G|^n} \Big |\le  \eps_{k,n} \frac{n_k(G)}{|G|^n}.$$ 
\item (Non-code) Assume that $F$ spans $H_k=G$ and the $\delta$-depth of $F$ is $D_k\ge 2$. Then 
$$\P(M_1\cdots M_k  F=0) \le \eps_{k,n}'  n_k(G) \frac{D_k^n}{|G|^n}.$$
\end{enumerate}
\end{proposition}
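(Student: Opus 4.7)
The plan is to prove both parts simultaneously by induction on $k$. The base case $k=1$ follows directly from \Cref{lemma:code:single:matrix:1} (for part (i), with $A=0$) and \Cref{lemma:non-code:1:matrix} (for part (ii), noting $|G/F(V)|=1<D_1$ since $F$ surjects); one checks that the error constants in those lemmas are dominated by $\eps_{1,n}$ and $\eps_{1,n}'$ for $n$ sufficiently large given $\delta = n^{-c}$ and $|G| \le \exp(n^{c/8})$, which essentially amounts to observing $n|G|\exp(-\al\delta n/a^2) \le \exp(-\al\delta n/(2a^2))$.

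For the inductive step, I would condition on $M_k$ and use independence of the matrices to write
\begin{equation*}
\P(M_1 \cdots M_k F = 0) = \sum_{F' \in G^n} \P(M_k F = F') \cdot \P(M_1 \cdots M_{k-1} F' = 0),
\end{equation*}
then partition the outer sum according to the image $H := F'(V) \le G$ and, within each image, according to whether $F'$ is a code of distance $\delta n$ in $H$ (in which case inductive part (i) applies with $H$ in place of $G$) or has $\delta$-depth $\ge 2$ in $H$ (in which case inductive part (ii) applies). A non-code surjection onto $H$ automatically has depth $\ge 2$ in $H$, directly from the definition. The combinatorial identity powering the main term is $\sum_{H \le G} n_{k-1}(H) = n_k(G)$, obtained by extending a chain of length $k-1$ ending at $H$ by the final step $H \le G$.

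For part (i), the code-$F'$ contribution is treated most cleanly by applying \Cref{lemma:code:subgp} to get $\P(M_k F \text{ is a code of distance } \delta n \text{ in } H) = |H|^n (1 \pm \eps')/|G|^n$ with $\eps' = 2n|G|\exp(-\al\delta n/(2a^2))$, and then applying the inductive hypothesis uniformly to all code $F'$ in $H$; after summing over $H$ this yields $(1 \pm \eps')(1 \pm \eps_{k-1,n}) n_k(G)/|G|^n$, whose deviation from $n_k(G)/|G|^n$ exactly matches the first three summands in the recursive definition of $\eps_{k,n}$. The non-code-$F'$ contribution is estimated using $\P(M_k F = F') \le (1+\beta_n)/|G|^n$ from \Cref{lemma:code:single:matrix:1}, the count from \Cref{lemma:non-code:count:1}, and inductive part (ii); after the cancellation of $D^n$, summing over $D \ge 2$ and $H \le G$ using $\ell(D) \le \log_2|G|$, $\binom{n}{m} \le n^m$ and $K_H \le |H|^{\log_2|H|}$ absorbs this into the final summand $2\eps_{k-1,n}'(n|G|)^{1+\delta n \log_2|G|} n_k(G)/|G|^n$. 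For part (ii), the only structural change is to use \Cref{lemma:non-code:1:matrix} to get the uniform bound $\P(M_k F = F') \le K_0 \exp(-\al n) D_k^n/|G|^n$; the code-$F'$ contribution then yields $K_0 \exp(-\al n)(1+\eps_{k-1,n}) D_k^n n_k(G)/|G|^n$ (matching the second summand of $\eps_{k,n}'$), and the non-code contribution yields $\eps_{k-1,n}'(n|G|)^{2+\delta n \log_2|G|} D_k^n n_k(G)/|G|^n$ (matching the first summand).

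The main obstacle is the combinatorial bookkeeping in the non-code contributions: one must simultaneously handle the $K_H \le |H|^{\log_2|H|}$ factor from \Cref{lemma:non-code:count:1}, the divisor sum over $D \ge 2$, and the binomial $\binom{n}{\lceil \ell(D)\delta n\rceil - 1}$, verifying that their product is absorbed into a polynomial of the form $(n|G|)^{O(1+\delta n \log_2|G|)}$ with constants consistent with the definitions of $\eps_{k,n}$ and $\eps_{k,n}'$. The key ingredients for this are the bounds $\ell(D) \le \log_2|G|$ and $|G| \le \exp(n^{c/8})$, which together with $\delta = n^{-c}$ keep the combined exponent of order $n^{1-3c/4}$, much smaller than $n$, so that the resulting recursive errors remain controlled as in \Cref{claim:eps-eps'}.
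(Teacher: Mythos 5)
Your proposal is correct and follows essentially the same route as the paper: the same joint induction on $k$, conditioning on $M_k$, decomposing by the span $H = (M_kF)(V)$ and the code/non-code dichotomy, applying Lemma \ref{lemma:code:subgp} together with inductive part (i) to the code case and Lemma \ref{lemma:non-code:count:1} plus Lemma \ref{lemma:code:single:matrix:1} (resp.\ Lemma \ref{lemma:non-code:1:matrix} for part (ii)) together with inductive part (ii) to the non-code case, and summing via $\sum_{H \le G} n_{k-1}(H) = n_k(G)$. Your bookkeeping of which terms land on which summand of the recursions for $\eps_{k,n}$ and $\eps_{k,n}'$ matches the paper's accounting.
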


\begin{proof} We prove (i) and (ii) together by induction on $k$, assuming both (i) and (ii) hold for $k-1$ as the inductive hypothesis. When $k=1$, (i) and (ii) follow from Lemma \ref{lemma:code:single:matrix:1} and Lemma \ref{lemma:non-code:1:matrix} respectively with
$$\eps_{1,n} = 2 \exp(-\al \delta n/2a^2),  \eps_{1,n}' = K_0 \exp(-\al n).$$
 Next we consider $k\ge 2$.

{\bf Codes.} We first prove (i) by working with $F$ a code of distance $\delta n$. 

Let $H_{k-1}$ be a subgroup of $H_k=G$. We consider the event (in the $\sigma$-algebra generated by $M_k$) that $M_kF$ spans $H_{k-1}$ in two ways 
\begin{enumerate}[label=(\arabic*)]
\item $M_k F$ is a code of distance $\delta n$ in $H_{k-1}$; 
\item $M_k F$ is not a code of distance $\delta n$, and hence has $\delta$-depth $D_{k-1}\ge 2$ in $H_{k-1}$.
\end{enumerate}
For the first case, we apply the induction hypothesis for (i) to obtain
\begin{multline}
  \Big|\P_{M_1,\dots, M_{k-1}}(M_1\dots M_{k-1}(M_k F)=0|  \text{$M_k F$ is $\delta n$ code in $H_{k-1}$}) -\frac{n_{k-1}(H_{k-1})}{|H_{k-1}|^n}\Big| \\ \le \eps_{k-1,n}  \frac{n_{k-1}(H_{k-1}) }{|H_{k-1}|^n}.
\end{multline}
For the second case, we also apply the induction hypothesis for (ii) to obtain
\begin{multline}
  \P_{M_1,\dots, M_{k-1}}\left(M_1\dots M_{k-1}(M_k F)=0|  \text{ $M_k F$ has $\delta$-depth $D_{k-1}\ge 2$ in $H_{k-1}$}\right) \\ \le \eps_{k-1,n}' n_{k-1}(H_{k-1})\frac{D_{k-1}^n}{|H_{k-1}|^n}.
\end{multline}
Hence
\begin{align*}
&\ \P_{}\left(\prod_{i=1}^k M_i F=0, \text{$M_k F$ spans $H_{k-1}$}\right) \\ 
&=\P_{M_1,\dots, M_{k-1}}(M_1\dots M_{k-1}(M_k F)=0|  \text{$M_k F$ is $\delta n$-code in $H_{k-1}$})  \\
& \times \P(\text{$M_k F$ is $\delta n$-code in $H_{k-1}$})\\ 
& +\sum_{\substack{D_{k-1} \geq 2 \\ D_{k-1} \big{\vert} |H_{k-1}|}} \P_{M_1,\dots, M_{k-1}}(M_1\dots M_{k-1} (M_k F)=0| \text{ $M_k F$ has $\delta$-depth $D_{k-1}$ in $H_{k-1}$}) \\
& \times \P(\text{$M_k F$ has $\delta$-depth $D_{k-1}$ in $H_{k-1}$}) \\
 &=: S_1(H_{k-1}) + \sum_{\substack{D_{k-1} \geq 2 \\ D_{k-1} \big{\vert} |H_{k-1}|}}S_2(H_{k-1}, D_{k-1}).
\end{align*}

For the first sum, by Claim \ref{claim:code:count}, and then by Lemma \ref{lemma:code:subgp} and the inductive hypothesis for (i) we have (recalling the $\pm$ notation from \eqref{eqn:pm})
\begin{align*}
S_1(H_{k-1})&= \left(\frac{n_{k-1}(H_{k-1})}{|H_{k-1}|^n} \pm \eps_{k-1,n} \left(\frac{n_{k-1}(H_{k-1})}{|H_{k-1}|^n}\right)\right)   |H_{k-1}|^n \frac{1\pm 2n |G| \exp(-\al \delta n/2a^2)}{|G|^n}\\
&=  \frac{n_{k-1}(H_{k-1})}{|G|^n}\Big(1 \pm  2 n |G| \exp(-\al \delta n/2a^2) \pm  \eps_{k-1,n} \pm 2n |G| \eps_{k-1,n}\exp(-\al \delta n/4a^2) \Big).
\end{align*}
For the second sum, for each $D_{k-1}$ we apply Lemma \ref{lemma:non-code:count:1} and Lemma \ref{lemma:code:single:matrix:1} to bound
\begin{multline}\label{eq:some_depth_prob_bound}
\P(\text{$M_k F$ has $\delta$-depth $D_{k-1}$ in $H_{k-1}$}) \\  \leq |G|^{\log_2 |G|} \binom{n}{\lceil \ell(D_{k-1}) \delta n\rceil -1} |H_{k-1}|^n D_{k-1}^{-n+\ell(D_{k-1}) \delta n}  \frac{1+ 2n |G| \exp(-\al \delta n/2 a^2)}{|G|^n}
\end{multline}
and apply the inductive hypothesis for (ii) to bound 
\begin{multline}\label{eq:ind(ii)}
\P_{M_1,\dots, M_{k-1}}(M_1\dots M_{k-1} (M_k F)=0| \text{ $M_k F$ has $\delta$-depth $D_{k-1}$ in $H_{k-1}$}) \\ \le \eps_{k-1,n}'  n_{k-1}(H_{k-1}) \frac{D_{k-1}^n}{|H_{k-1}|^n}.
\end{multline}
Combining \eqref{eq:some_depth_prob_bound} with \eqref{eq:ind(ii)} yields
\begin{align*}
S_2(H_{k-1},D_{k-1})& \le  \eps_{k-1,n}' n_{k-1}(H_{k-1})  \frac{D_{k-1}^n}{|H_{k-1}|^n} \times \\
&\times |G|^{\log_2 |G|} \binom{n}{\lceil \ell(D_{k-1}) \delta n\rceil -1} |H_{k-1}|^n D_{k-1}^{-n+\ell(D_{k-1}) \delta n}  \frac{1+ 2n |G| \exp(-\al \delta n/2 a^2)}{|G|^n}\\
& \le 2 \eps_{k-1,n}'  n_{k-1}(H_{k-1})   |G|^{\log_2 |G|} \binom{n}{\lceil \ell(D_{k-1}) \delta n\rceil -1} D_{k-1}^{\ell(D_{k-1}) \delta n}   \frac{1}{|G|^n}\\
& \le 2 \eps_{k-1,n}' (n|G|)^{\delta n \log_2 |G| } n_{k-1}(H_{k-1})   \frac{1}{|G|^n},
\end{align*}
where we used the fact that $\ell(D_{k-1}) \le \log_2 D_{k-1} \le \log_2 |G|$ and $ \binom{n}{\lceil \ell(D_{k-1}) \delta n\rceil -1} \le n^{\lceil \ell(D_{k-1}) \delta n\rceil -1}$, and $n$ is sufficiently large, with $\delta$ and $G$ from \eqref{deltamagnitude} and \eqref{Gmagnitude} respectively.


Summing over divisors $D_{k-1}$ of $|H_{k-1}|$ (for which we generously bound it from above by $|G|$),
\begin{align*}
\sum_{D_{k-1} \geq 2, D_{k-1}\big{\vert} \left\vert H_{k-1}\right\vert} S_2(H_{k-1},D_{k-1})  &\le |G| 2 \eps_{k-1,n}'  |G|^{\delta n\log_2 |G| }   n_{k-1}(H_{k-1})  \frac{1}{|G|^n} \\
&\le 2 \eps_{k-1,n}'  (n|G|)^{1+\delta n \log_2 |G|}  n_{k-1}(H_{k-1})   \frac{1}{|G|^n}.
\end{align*} 
Summing over $H_{k-1}\le H_k$ we thus obtain
\begin{align}\label{eqn:code:M_k:1}
\begin{split}
\P_{}\left(\prod_{i=1}^k M_i F=0\right) &=\sum_{H_{k-1}} \P_{}\left(\prod_{i=1}^k M_i F=0 \wedge \text{$M_k F$ spans $H_{k-1}$}\right)   \\
& = \sum_{H_{k-1}} S_1(H_{k-1}) + \sum_{\substack{D_{k-1} \geq 2 \\ D_{k-1} \big{\vert} |H_{k-1}|}}S_2(H_{k-1}, D_{k-1})  \\
 & = \frac{n_k(G)}{|G|^n}\Big(1 \pm  2n |G| \exp(-\al \delta n/4a^2) \pm  \eps_{k-1,n} \pm 2 n |G| \eps_{k-1,n}\exp(-\al \delta n/4a^2) \Big)\\
&\pm 2 \eps_{k-1,n}'  (n|G|)^{1+\delta n \log_2 |G|}      \frac{ n_{k}(G)}{|G|^n}\\
&= \frac{n_k(G)}{|G|^n}\Big(1 \pm  2n |G| \exp(-\al \delta n/4a^2) \pm  \eps_{k-1,n} \\
&
\pm 2n|G| \eps_{k-1,n}\exp(-\al \delta n/4a^2) \pm  2 \eps_{k-1,n}'  (n|G|)^{1+\delta n \log_2 |G|}  \Big).
\end{split}
\end{align}
Substituting in the recurrence for $\eps_{k,n}$ in \Cref{def:eps_seq} completes the inductive step for part (i).

{\bf Non-codes.} We next prove (ii) by working with $F$ of $\delta$-depth $D_k\ge 2$, where $D_k$ also divides $|H_k|=|G|$. Let $H_{k-1}$ be a subgroup of $H_k$. Similarly to the previous part, we again compute the probability that $M_kF$ spans $H_{k-1}$ in the two possible ways:

\begin{enumerate}[label=(\arabic*)]
\item $M_k F$ is a code of distance $\delta n$ in $H_{k-1}$; 
\item $M_k F$ is not a code of distance $\delta n$, and hence has $\delta$-depth $D_{k-1}\ge 2$ in $H_{k-1}$.
\end{enumerate}

For the first case, the probability with respect to $M_k$ is bounded by
$$\P_{M_k}(   \text{$M_k F$ is a code of distance $\delta n$ in $H_{k-1}$} ) \le K_0 |H_{k-1}|^n   \exp(-\al n)  \frac{D_{k}^n}{|G|^n}$$
by bounding the number of codes by $|H_{k-1}|^n$ and applying Lemma  \ref{lemma:non-code:1:matrix}. Hence, by induction and by the independence of $M_1,\dots, M_k$
\begin{align*}
& \P_{}(M_1\cdots M_k  F=0 \text{ and $M_k F$ is a code of distance $\delta n$ in $H_{k-1}$}) \\
& \le \left(\frac{n_{k-1}(H_{k-1})}{|H_{k-1}|^n}+\eps_{k-1,n}  \frac{n_{k-1}(H_{k-1})}{|H_{k-1}|^n}\right) \times  K_0 |H_{k-1}|^n   \exp(-\al n)  \frac{D_{k}^n}{|G|^n} \\
 &\le K_0 \exp(-\al n)(1+ \eps_{k-1,n})\frac{n_{k-1}(H_{k-1}) D_{k}^n}{|G|^n}.
\end{align*}
Summing over the subgroups $H_{k-1}$, we obtain
\begin{multline}\label{eqn:non-code:M_k:1}
\P_{}(M_1\cdots M_k  F=0 \text{ and $M_k F$ is a $\delta n$ code in $H_{k-1}$ for some $H_{k-1} \leq H_k$}) \\ \le  K_0 \exp(-\al n)(1+ \eps_{k-1,n})\frac{n_{k}(G) D_{k}^n}{|G|^n}. 
\end{multline}

For the second case (2), the probability with respect to $M_k$, by Lemma \ref{lemma:non-code:count:1} and Lemma  \ref{lemma:non-code:1:matrix}, is bounded by
\begin{multline}
  \P_{M_k}(   \text{$M_k F$ is of $D_{k-1}$-depth in $H_{k-1}$} ) \\ \le  |G|^{\log_2 |G|}  \binom{n}{\lceil \ell(D_{k-1}) \delta n\rceil -1} |H_{k-1}|^n D_{k-1}^{-n+\ell(D_{k-1}) \delta n}  \times  K_0\exp(-\al n)  \frac{D_{k}^n}{|G|^n} .
\end{multline}
Hence, by induction (applied to $M_1 \cdots M_{k-1}$ with the starting vector $(M_kF)$)
\begin{align*}
& \P_{}(M_1\cdots M_k  F=0 \text{ and $M_k F$ is of $\delta$-depth $D_{k-1}$ in $H_{k-1}$}) \\
&= \P_{}(M_1\cdots M_k  F=0 |\text{$M_k F$ is of $\delta$-depth $D_{k-1}$ in $H_{k-1}$}) \cdot \P(\text{$M_k F$ is of $\delta$-depth $D_{k-1}$ in $H_{k-1}$})\\
& \le \eps_{k-1,n}'  n_{k-1}(H_{k-1})  \frac{D_{k-1}^n}{|H_{k-1}|^n} \times |G|^{\log_2 |G|}  \\ 
&\times \binom{n}{\lceil \ell(D_{k-1}) \delta n\rceil -1} |H_{k-1}|^n D_{k-1}^{-n+\ell(D_{k-1}) \delta n}  \times  K_0\exp(-\al n)  \frac{D_{k}^n}{|G|^n} \\
&\le \eps_{k-1,n}'   (n|G|)^{1+\delta n \log_2 |G|}   n_{k-1}(H_{k-1}) \frac{D_{k}^n}{|G|^n},
\end{align*}
provided that $n$ is sufficiently large.

Summing over $D_{k-1}$ a divisor of $|H_{k-1}|$ (for which, very generously, there are at most $|G|$ of such), and then over the subgroup $H_{k-1}$
\begin{align}\label{eqn:non-code:M_k:2}
\begin{split}
& \P_{}(M_1\cdots M_k  F=0 \text{ and $M_k F$ is of $D_{k-1}$-depth in $H_{k-1}$ for some $D_{k-1}\ge 2$ and subgroup $H_{k-1}$}) \\ 
& \le \eps_{k-1,n}' (n|G|)^{2+\delta n \log_2 |G|}   n_{k}(G) \frac{D_{k}^n}{|G|^n},
\end{split}
\end{align}
proving our upper bound for non-codes $F$. Combining \eqref{eqn:non-code:M_k:1} with \eqref{eqn:non-code:M_k:2} and the recurrence for $\eps'_{k,n}$ in \Cref{def:eps_seq} completes the inductive step for part (ii). 
\end{proof}

Using the proof of Proposition \ref{prop:single:k} and Claim \ref{claim:eps-eps'} above we obtain

\begin{theorem}[Asymptotic moments of matrix products]\label{theorem:surmoment:k} Let $a \geq 2$ and $R=\Z/a\Z$, $G$ be any finite abelian group whose exponent is divisible by $a$, and $M_1,\ldots,M_k$ be random matrices in $\Mat_n(R)$ with iid $\alpha$-balanced entries. Let $c= \al/16 a^2$ and assume that 
$$|G| \le \exp(n^{c/8})$$
and
$$k \le n^{c/8}.$$
Then
$$\Big |\E \left[\# \Sur(\cok(M_1\cdots M_k ),G)) - n_k(G)\right]\Big| \le e^{-n^{c/4}} n_k(G),$$
for sufficiently large $n$.
\end{theorem}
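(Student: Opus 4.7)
The plan is to apply Proposition~\ref{prop:single:k} term-by-term to the expansion
\[
\E[\#\Sur(\cok(M_1 \cdots M_k), G)] = \sum_{F \in \Sur(V,G)} \P(M_1 \cdots M_k F = 0)
\]
from \eqref{E:expandF}. Fixing $\delta = n^{-c}$ as in the section hypothesis, I partition $\Sur(V,G)$ according to whether $F$ is a code of distance $\delta n$ (so part (i) of the proposition applies) or has $\delta$-depth $D \geq 2$ (so part (ii) applies), and bound each piece separately against $n_k(G)$.

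For the code contribution, Proposition~\ref{prop:single:k}(i) gives $\P(M_1\cdots M_k F = 0) = \tfrac{n_k(G)}{|G|^n}(1 \pm \eps_{k,n})$ for each summand. A code of distance $\delta n \geq 1$ is automatically surjective, and Claim~\ref{claim:code:count} shows that the number of codes is $(1 \pm (1/2)^{n/2})|G|^n$, so the code piece equals $n_k(G)(1 \pm \eps_{k,n})(1 \pm (1/2)^{n/2})$. For the non-code contribution, Proposition~\ref{prop:single:k}(ii) bounds each summand by $\eps_{k,n}' n_k(G) D^n/|G|^n$, and Lemma~\ref{lemma:non-code:count:1} bounds the number of $F$ of $\delta$-depth $D$ by $|G|^{\log_2|G|}\binom{n}{\lceil \ell(D)\delta n\rceil -1}|G|^n D^{-n+\ell(D)\delta n}$. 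The $D^{\pm n}$ factors cancel; using $\ell(D) \leq \log_2|G|$, $\binom{n}{m} \leq n^m$, and summing over the at most $|G|$ divisors $D \geq 2$ of $|G|$ yields a non-code contribution bounded by $\eps_{k,n}'(n|G|)^{1+\delta n \log_2|G|} n_k(G)$.

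Combining the two pieces, the total error is at most $n_k(G)\bigl(\eps_{k,n} + (1/2)^{n/2} + \eps_{k,n}'(n|G|)^{1+\delta n \log_2|G|}\bigr)$. The first two terms are immediately dominated by $e^{-n^{c/4}}$ via Claim~\ref{claim:eps-eps'}. The main obstacle is the third term: under the hypotheses $|G| \leq \exp(n^{c/8})$ and $\delta n = n^{1-c}$, the polynomial blow-up $(n|G|)^{1+\delta n \log_2|G|}$ can reach $\exp(O(n^{1-3c/4}))$, so the stated bound $\eps_{k,n}' \leq e^{-n^{c/4}}$ from Claim~\ref{claim:eps-eps'} alone is not strong enough. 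To close the gap one must re-examine the recurrence defining $\eps_{i,n}'$ in Definition~\ref{def:eps_seq} directly: writing $\eps_{i,n}' \leq A\eps_{i-1,n}' + B$ with $A := (n|G|)^{2+\delta n \log_2|G|}$ and $B := O(e^{-\alpha n})$, the $k \leq n^{c/8}$ iterations compound to $\eps_{k,n}' \leq k A^{k-1} B \leq \exp(O(n^{1-5c/8}) - \alpha n) = e^{-\Omega(n)}$. Multiplying by the polynomial blow-up still yields $e^{-\Omega(n)}$, comfortably below the target $e^{-n^{c/4}}$, which completes the estimate.
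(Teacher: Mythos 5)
Your argument is correct and mirrors the paper's overall strategy: expand via \eqref{E:expandF}, partition $\Sur(V,G)$ into codes and non-codes, and bound each piece with Proposition~\ref{prop:single:k}, Claim~\ref{claim:code:count}, and Lemma~\ref{lemma:non-code:count:1}. The one structural divergence is in the non-code piece: you apply Proposition~\ref{prop:single:k}(ii) directly at level $k$, yielding an error proportional to $\eps_{k,n}'$, whereas the paper re-uses the intermediate bounds \eqref{eqn:non-code:M_k:1} and \eqref{eqn:non-code:M_k:2} from that proposition's proof (conditioning on the behavior of $M_kF$), so its error splits into a $K_0e^{-\alpha n}(n|G|)^{\delta n\log_2|G|}$ term and an $\eps_{k-1,n}'(n|G|)^{2+2\delta n\log_2|G|}$ term; your version is slightly cleaner and equally valid. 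More importantly, you correctly observe that the stated bound $\eps_{k,n}'\le e^{-n^{c/4}}$ of Claim~\ref{claim:eps-eps'} does not by itself absorb the polynomial blow-up $(n|G|)^{\Theta(\delta n\log_2|G|)}=\exp(O(n^{1-3c/4}))$ (since $1-3c/4>c/4$), and you close this by unrolling the $\eps'$-recurrence of Definition~\ref{def:eps_seq} to get the much sharper $\eps_{k,n}'\le\exp\bigl(O(n^{1-5c/8})-\alpha n\bigr)=e^{-\Omega(n)}$; the paper's proof also needs essentially this estimate but leaves it implicit in the phrase referring to Claim~\ref{claim:eps-eps'}, so your write-up is actually more careful on this point. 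One cosmetic slip: summing carefully, the non-code contribution is $\eps_{k,n}'(n|G|)^{1+2\delta n\log_2|G|}n_k(G)$ rather than $(n|G|)^{1+\delta n\log_2|G|}$, since both the prefactor $|G|^{\log_2|G|}$ from Lemma~\ref{lemma:non-code:count:1} and the factor $D^{\ell(D)\delta n}\le|G|^{\delta n\log_2|G|}$ each contribute a copy of $|G|^{\Theta(\delta n\log_2|G|)}$; this changes nothing in the asymptotics.
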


\begin{proof}[Proof of Theorem \ref{theorem:surmoment:k}] By \eqref{E:expandF} and using Claim \ref{claim:eps-eps'} to bound the right hand side below, it suffices to show that 
\begin{multline}\label{eqn:prod:sum}
\Big|\sum_{F\in \Sur(V,G)}  \P(M_1\cdots M_k  F=0 \mbox{ in $G$}) - n_k(G)\Big| \\ \le n_k(G)\Big(2\eps_{k,n}+K_0 \exp(-\al n)(n|G|)^{\delta n \log_2 |G|}+\eps_{k-1,n}' (n|G|)^{2+2\delta n \log_2 |G|} \Big).
\end{multline}
From Proposition \ref{prop:single:k} (i) and Claim \ref{claim:code:count}, we sum over $F$ as codes of distance $\delta n$ in $G$ to obtain

\begin{align}\label{eqn:code:sum:1}
\begin{split}
\sum_{F \text{ code of distance $\delta n$ in $G$}}\P_{}\left(\prod_{i=1}^k M_i F=0\right)& = |G|^n(1 \pm (1/2)^{n/2}) \times \left(\frac{n_k(G)}{|G|^n}(1\pm \eps_{k,n})\right)  \\
& = n_k(G)(1\pm 2\eps_{k,n}),
\end{split}
\end{align}
where we used the fact that, as $\delta=n^{-c}$, $\eps_{k,n} \ge \eps_{1,n} = 2 \exp(-\al \delta n/2a^2) \ge (1/2)^{n/2-1}$. 

From \eqref{eqn:non-code:M_k:1} and Lemma \ref{lemma:non-code:count:1}, for each $D_k$ as divisor of $|G|$, summing over non-codes $F$ of $\delta$-depth $D_k$
\begin{align}\label{eqn:noncode:code:1}
\begin{split}
& \P_{}(\mbox{ $\exists F$ of depth $D_k$ in $G$ such that } M_1\cdots M_k  F =0 \text{ and $M_k F$ is a $\delta n$ code in $H_{k-1}$ for some $H_{k-1}$}) \\
&\le  |G|^{\log_2 |G|} \binom{n}{\lceil \ell(D_{k}) \delta n\rceil -1} |G|^n D_{k}^{-n+\ell(D_{k}) \delta n}  K_0 \exp(-\al n) \frac{n_{k}(G) D_{k}^n}{|G|^n}\\
& \le K_0 \exp(-\al n)(n|G|)^{\delta n \log_2 |G|}   n_{k}(G),
\end{split}
\end{align}
provided that $n$ is sufficiently large. 

Also, from \eqref{eqn:non-code:M_k:2} and Lemma \ref{lemma:non-code:count:1}, 
\begin{align}\label{eqn:noncode:noncode}
\begin{split}
& \P(\text{$\exists F$ of depth $D_k$ in $G$ with } M_1\cdots M_k  F=0 \text{ and $M_k F$ is of depth $D_{k-1}$ in $H_{k-1}$} \\
&\text{ for some $D_{k-1}\ge 2, H_{k-1} \le H_k$}) \\ 
&\le |G|^{\log_2 |G|} \binom{n}{\lceil \ell(D_{k}) \delta n\rceil -1} |G|^n D_{k}^{-n+\ell(D_{k}) \delta n}  \eps_{k-1,n}' (n|G|)^{2+\delta n \log_2 |G|}   n_{k}(G) \frac{D_{k}^n}{|G|^n}  \\
&\le  \eps_{k-1,n}' (n|G|)^{2+2\delta n \log_2 |G|}  n_{k}(G),
\end{split}
\end{align}
 as  $n$ is sufficiently large. 

Summing \eqref{eqn:noncode:code:1}, \eqref{eqn:noncode:noncode} over all divisor $D_k$ of  $|G|$, together with \eqref{eqn:code:sum:1} we obtain \eqref{eqn:prod:sum} as claimed.
\end{proof}

We then deduce the following, showing how moment bounds for finite rings can be applied for infinite ones such as $\Z$ and $\Z_p$.

\begin{corollary}\label{cor:sur:uniform:asym} Let $M_1,\dots, M_k$ have iid entries in $\Z$ or $\Z_p$ which are $\al$-balanced modulo $p$. Then for any $p$-group $G$ with exponent $p^L$ and
$$|G| \le \exp(n^{\al/(128 p^{2L})})$$
and
$$k \le n^{\al/(128 p^{2L})}$$
we have 
$$\Big |\E (\# \Sur(\cok(M_1\cdots M_k),G)) - n_k(G)\Big| \le e^{-n^{\al/(64p^{2L})}} n_k(G) .$$
\end{corollary}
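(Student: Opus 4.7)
The plan is a direct reduction of the infinite-ring case to the finite-ring Theorem \ref{theorem:surmoment:k} by quotienting by the exponent of $G$. Let $p^L$ be the exponent of $G$, and set $a = p^L$ and $R = \Z/p^L\Z$. Write $\bar{M}_i$ for the reduction of $M_i$ modulo $p^L$, which is a random matrix in $\Mat_n(R)$.

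The first step is to observe that
\begin{equation*}
\#\Sur(\cok(M_1 \cdots M_k), G) = \#\Sur(\cok(\bar{M}_1 \cdots \bar{M}_k), G).
\end{equation*}
This holds because every element of $G$ is annihilated by $p^L$, so any homomorphism out of $\cok(M_1 \cdots M_k)$ to $G$ factors through the quotient $\cok(M_1 \cdots M_k)/p^L \cok(M_1 \cdots M_k)$, and this quotient is naturally identified with $\cok(\bar{M}_1 \cdots \bar{M}_k) = R^n/\bar{M}_1 \cdots \bar{M}_k R^n$ (the identification respects the map to $G$, and it is the same in the $\Z$ and $\Z_p$ cases). Surjectivity is preserved in both directions of this identification.

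Second, I need to check that the reduced entries are $\alpha$-balanced over $R = \Z/p^L\Z$ in the sense of Definition \ref{def:R_alpha_balanced}. The only prime dividing $a = p^L$ is $p$ itself, so the condition to check is precisely
\begin{equation*}
\max_{r \in \Z/p\Z} \Pr(\xi \equiv r \pmod{p}) \leq 1 - \alpha,
\end{equation*}
which is our standing hypothesis.

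Third, I apply Theorem \ref{theorem:surmoment:k} with $a = p^L$, so that the constant $c$ appearing there becomes $c = \alpha/(16 p^{2L})$. The hypotheses of the corollary, namely $|G| \leq \exp(n^{\alpha/(128 p^{2L})})$ and $k \leq n^{\alpha/(128 p^{2L})}$, are precisely the $|G| \leq \exp(n^{c/8})$ and $k \leq n^{c/8}$ conditions needed to invoke the theorem. The conclusion of Theorem \ref{theorem:surmoment:k} then reads
\begin{equation*}
\bigl|\E[\#\Sur(\cok(\bar{M}_1 \cdots \bar{M}_k), G)] - n_k(G)\bigr| \leq e^{-n^{c/4}} n_k(G) = e^{-n^{\alpha/(64 p^{2L})}} n_k(G),
\end{equation*}
which combined with the identity from the first step gives the claim. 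There is no real obstacle here; the substance is in Theorem \ref{theorem:surmoment:k}, and this corollary is just the bookkeeping that lets us feed integer or $p$-adic matrices into that theorem.
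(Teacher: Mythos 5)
Your proof is correct and follows essentially the same route as the paper: reduce modulo $p^L$, observe that surjections onto $G$ factor through $\cok(M_1\cdots M_k)/p^L\cok(M_1\cdots M_k) \cong \cok(\bar{M}_1\cdots\bar{M}_k)$, note that $\alpha$-balancedness modulo $p$ is exactly the condition required over $\Z/p^L\Z$, and apply Theorem~\ref{theorem:surmoment:k} with $a=p^L$ so that $c=\alpha/(16p^{2L})$. The only minor thing worth making explicit is that the exponent condition in Theorem~\ref{theorem:surmoment:k} is satisfied because $G$ has exponent exactly $p^L=a$; you have covered everything else.
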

\begin{proof}
Let $p^L$ be the exponent of $G$. First note that for any abelian $p$-group $H$, 
\begin{equation}\label{eq:reduce_surj_simple}
\#\Sur(H,G) = \#\Sur(H/p^LH,G),
\end{equation}
as any surjection from $H$ to $G$ automatically annihilates $p^{L}H$. Note also, with the notation $\widetilde{M} := M \pmod{p^L} \in \Mat_n(\Z/p^L\Z)$ for $M \in \Mat_n(\Z_p)$, that $\cok(\widetilde{M}) = \cok(M)/p^L \cok(M)$. Combining with \eqref{eq:reduce_surj_simple} yields that  
\begin{equation}
\# \Sur(\cok(M_1\cdots M_k \Z_p^n),G) = \# \Sur(\cok(\widetilde{M}_1\cdots \widetilde{M}_k (\Z/p^{L}\Z)^n),G).
\end{equation}
The result now follows from Theorem \ref{theorem:surmoment:k} applied with $R=\Z/p^L\Z$ and $c= \al/(16 p^{2L})$ and matrices $\widetilde{M}_1,\ldots,\widetilde{M}_k$, which are $\alpha$-balanced.
\end{proof}

\section{Proof of \Cref{thm:matrix_product_intro}}\label{sect:n_k}

The only missing basic ingredient is some simple bounds and asymptotics on the function $n_k(G)$ defined in \Cref{def:n_k}. We will prove these, then prove \Cref{thm:matrix_product_intro}, and finally prove that \Cref{def:cL_intro} indeed defined a valid random variable.

Let $|G_{\mu,\nu}|$ be the number of subgroups isomorphic to $G_\mu$ of $G_\nu$.
We need the following estimate, which can be deduced from  \cite{Butler}; see also \cite[Lemma 7.4]{W0}.

\begin{lemma}\label{lemma:sbgp_num_bound}
For $\mu\le \nu$, we have
\[
p^{\sum_{i=1}^{\nu_1}\mu_i'(\nu_i'-\mu_i')}
\ \le\ 
|G_{\mu,\nu}|
\ \le\ 
p^{\sum_{i=1}^{\nu_1}\mu_i'(\nu_i'-\mu_i')}\,(p^{-1};p^{-1})_\infty^{-\nu_1}.
\]
In particular,
\[
p^{\sum_{i=1}^{\nu_1}\mu_i'(\nu_i'-\mu_i')}
\ \le\ 
|G_{\mu,\nu}|
\ \le\ 
p^{\sum_{i=1}^{\nu_1}\mu_i'(\nu_i'-\mu_i')}\,(2^{-1};2^{-1})_\infty^{-\nu_1}.
\]
\end{lemma}



The following result explains the appearance of $n_{\max}$ in the moments of our limiting random variable.

\begin{lemma}\label{lemma:n_k:largek} Recalling the notations $n_k$ and $n_{\max}$ from \Cref{def:n_k} and \Cref{def:cL_intro} respectively, we have
\begin{equation}\label{eq:limit_sbgp_seq}
\lim_{k \to \infty} \frac{n_k(G_{\la})}{\binom{k}{|\la|}} = n_{\max}(G_\la).
\end{equation}
Furthermore, the bounds
\begin{equation}
\label{eq:nmax_bounds}
p^{ \sum_{i=1}^{\la_1} \binom{\la_i'}{2}} \le n_{\max}(G_\la) \le d^{|\la|} (\prod_{k=1}^\infty \frac{1}{1-p^{-k}})^{d |\la|} p^{ \sum_{i=1}^{\la_1} \binom{\la_i'}{2}}
\end{equation}
hold for all $\la \in \Y$ with $\la' \in \Y_d$. 
\end{lemma}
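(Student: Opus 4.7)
My plan divides into two independent parts: the limit formula \eqref{eq:limit_sbgp_seq} and the bounds \eqref{eq:nmax_bounds}.

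For the limit, the key observation is that a weak chain $0 = H_0 \leq \cdots \leq H_k = G_\la$ is uniquely determined by (a) the strict chain of distinct subgroups $0 = K_0 < K_1 < \cdots < K_r = G_\la$ appearing in it, and (b) the $r$ positions $1 \leq s_1 < \cdots < s_r \leq k$ at which strict inclusions occur. Since $|K_i|$ strictly increases and $|G_\la| = p^{|\la|}$, we have $r \leq |\la|$. Letting $c_r(G_\la)$ denote the number of strict chains of length $r$ ending at $G_\la$, so that $c_{|\la|}(G_\la) = n_{\max}(G_\la)$ by definition, I obtain
$$n_k(G_\la) = \sum_{r=0}^{|\la|} \binom{k}{r} c_r(G_\la).$$
Dividing by $\binom{k}{|\la|}$ and using $\binom{k}{r}/\binom{k}{|\la|} \to 0$ for $r < |\la|$ yields \eqref{eq:limit_sbgp_seq}.

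For the bounds on $n_{\max}(G_\la)$, I will organize the count by the sequence of subgroup types. A maximal chain $K_0 < K_1 < \cdots < K_{|\la|} = G_\la$ has $|K_s/K_{s-1}| = p$ for all $s$, and the rank formula $\mu_i' = \dim_{\F_p}(p^{i-1}K[p])$ shows that $K_{s-1} \leq K_s$ forces $\mathrm{type}(K_{s-1}) \subseteq \mathrm{type}(K_s)$ as Young diagrams; combined with the size condition this means the sequence of types forms a standard Young tableau $T$ of shape $\la$. Iterating the recursion $n_{\max}(G_\nu) = \sum_{\mu \lessdot \nu} |G_{\mu, \nu}| \cdot n_{\max}(G_\mu)$ (with $\mu \lessdot \nu$ meaning $\nu$ with one box removed), I obtain
$$n_{\max}(G_\la) = \sum_{T \in \mathrm{SYT}(\la)} \prod_{s=1}^{|\la|} |G_{\mu^{(s-1)}(T), \mu^{(s)}(T)}|,$$
where $\mu^{(s)}(T)$ is the subpartition of boxes of $T$ with labels $\leq s$.

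Now Lemma \ref{lemma:sbgp_num_bound} applied to each pair $(\mu^{(s-1)}, \mu^{(s)})$, differing by one box in some column $c(s)$, gives
$$p^{\mu^{(s-1)'}_{c(s)}} \leq |G_{\mu^{(s-1)}, \mu^{(s)}}| \leq p^{\mu^{(s-1)'}_{c(s)}} \left(\prod_{k \geq 1}(1-p^{-k})^{-1}\right)^{\mu^{(s)}_1}.$$
The crucial combinatorial identity is that the exponent sum $\sum_s \mu^{(s-1)'}_{c(s)}$ is \emph{independent} of the choice of $T$: grouping contributions by column, column $c$ contributes the successive heights $0, 1, \ldots, \la_c' - 1$ as its $\la_c'$ boxes are added in whatever order, for a total of $\binom{\la_c'}{2}$. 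Hence $\sum_s \mu^{(s-1)'}_{c(s)} = \sum_c \binom{\la_c'}{2}$ always. The lower bound of \eqref{eq:nmax_bounds} then follows immediately by restricting the sum over $T$ to a single SYT (one exists since $\la$ is a partition). For the upper bound, since $\la' \in \Y_d$ forces $\la_1 \leq d$, the number of SYTs of shape $\la$ is at most $d^{|\la|}$ (at each of the $|\la|$ steps, choose one of at most $d$ columns to add to) and $\mu^{(s)}_1 \leq \la_1 \leq d$ uniformly, giving the stated inequality. The main technical point to verify is the invariance of $\sum_s \mu^{(s-1)'}_{c(s)}$ under choice of $T$; this is what makes the \emph{same} power of $p$ appear in both bounds.
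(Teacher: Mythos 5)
Your proposal is correct and follows essentially the same route as the paper: decompose weak chains into a strict "skeleton" plus a choice of $\binom{k}{r}$ positions to get \eqref{eq:limit_sbgp_seq}, then enumerate maximal chains via standard Young tableaux, apply Lemma \ref{lemma:sbgp_num_bound} to each covering step, and observe that the exponent $\sum_s \mu^{(s-1)\prime}_{c(s)}$ collapses to $\sum_c \binom{\la_c'}{2}$ by grouping by column, with the number of SYTs bounded by $d^{|\la|}$. The only cosmetic difference is that the paper organizes the skeleton sum by the sizes $p^{i_j}$ of the intermediate groups rather than directly by the length $r$ of the strict chain, but the counting is identical.
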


\begin{proof} For each tuple $0< i_1<\dots <i_{m-1} < |\la|$, let $\CM_{i_1,\dots, i_{m-1}}$ be the collection of all sequences of nested $p$-groups $\{0\}=G_0 < G_1 < G_2 <\dots < G_{m-1} <  G_\la$, where $G_j$ has size $p^{i_j}$. For a given sequence of $\{G_i\}$ in  $\CM_{i_1,\dots, i_{m-1}}$,  the number of nested sequences (in the definition of $n_k(G)$) of form $\{0\}=H_0 \le H_1 \le H_2 \le \dots \le H_k=G_\la$ whose ``skeleton" is exactly the sequence $\{G_i\}$ (that is, as  sets $\{H_0,\dots, H_k\} = \{G_0,\dots, G_{m-1}, G_\la \}$) is $\binom{k}{m}$. Thus the total number of nested group sequences $H_i$ whose skeleton are in $\CM_{i_1,\dots, i_{m-1}}$ is exactly $\binom{k}{m}|\CM_{i_1,\dots, i_{m-1}}|$. 

Note that if $k$ is sufficiently large given $|\la|$ then
$$\sum_{1\le m\le |\la|}\sum_{0< i_1<\dots <i_{m-1} < |\la|} \binom{k}{m}|\CM_{i_1,\dots, i_{m-1}}| = (1+O_{|\la|}(1/k)) \binom{k}{|\la|} |\CM_{1,2,\dots, |\la|-1}|,$$ 
where the implied constant might depend on $|\la|$ (for instance we can take it very crudely to be $|\la|^{|\la|}$). But $|\CM_{1,2,\dots, |\la|-1}| = n_{\max}(G_\la)$, which proves \eqref{eq:limit_sbgp_seq}.

For the bounds \eqref{eq:nmax_bounds} we must enumerate $\CM_{1,2,\dots, |\la|-1}$, so we need to count the number of nested partitions (corresponding to the nested $p$-subgroups) of types ${\la^{(1)}}<\dots < {\la^{(i)}} < {\la^{(i+1)}}< \dots < {\la^{(|\la|)}}=\la$, where $|\la^{(i+1)}| = 1+ |\la^{(i)}|$. These are just standard Young tableaux, for which exact formulas exist, but all we need here is a crude bound. 
 
Recall from Lemma \ref{lemma:sbgp_num_bound} that for any $\mu \leq \nu$, 
$$p^{\sum_{i=1}^{\nu_1} \mu_{i}' (\nu_i' - \mu_i')} \le |G_{\mu,\nu}|  \le  \prod_{i=1}^{\nu_1} (\prod_{k=1}^\infty \frac{1}{1-p^{-k}}) p^{\sum_{i=1}^{\nu_1} \mu_{i}' (\nu_i' - \mu_i')},$$
 where the latter can be bounded from above by $(\prod_{k=1}^\infty \frac{1}{1-p^{-k}})^d p^{\sum_{i=1}^{\nu_1} \mu_{i}' (\nu_i' - \mu_i')} $ if $\nu_1 \le d$.

Now if we look at the (multiplicative) contribution (over $1\le j\le |\la|-1$) of $p^{\sum_{i=1}^{\nu_1} \mu_{i}' (\nu_i' - \mu_i')}$ over each pair $(\mu,\nu) = (\la^{(j)}, \la^{(j+1)})$ from (any)  sequence ${\la^{(1)}}<\dots < {\la^{(i)}} < {\la^{(i+1)}}< \dots < {\la^{(|\la|)}}=\la$, we see that 
$$\prod_{j=1}^{|\la|-1} p^{\sum_{i=1}^{\nu_1} \mu_{i}' (\nu_i' - \mu_i')} = p^{\sum_i \binom{\la_i'}{2}}.$$
Indeed, as $|\nu| -|\mu|=1$, we have $\nu_i' = \mu_i'$ for all $i$ except at one index $i_0$ where $\nu_{i_0}' - \mu_{i_0}'=1$. In this case $\sum_{i=1}^{\nu_1} \mu_{i}' (\nu_i' - \mu_i') = \mu_{i_0}'$. As such, $\prod_{j=1}^{|\la|-1} p^{\sum_{i=1}^{\nu_1} \mu_{i}' (\nu_i' - \mu_i')} = p^{[(\la_1'-1)+\dots+1] + [(\la_2'-1)+\dots+1] + \dots}  = p^{\sum_i \binom{\la_i'}{2}}$.

Finally, if $\la_1 \le d$, the total of such nested partition sequences ${\la^{(1)}}<\dots < {\la^{(i)}} < {\la^{(i+1)}}< \dots < {\la^{(|\la|)}}=\la$ is trivially bounded from above by $d^{|\la|}$.
\end{proof}

Before we prove \Cref{thm:matrix_product_intro} we mention two loose ends with the definition we gave in \Cref{def:cL_intro} of our limiting random variables. The first is that in \Cref{def:cL_intro} we stated that the random variable $\cL_{d,p^{-1},\chi}$ is $\Sig_d$-valued, while \Cref{thm:group_convergence_intro} (which we will shortly invoke) only has the power to show that a unique $\bSig_d$-valued random variable exists with a given set of moments. We will have to prove this simultaneously with \Cref{thm:matrix_product_intro}, so that all objects in the theorem are well-defined. Along the way we also check that our definition agrees with the quite different one given previously in \cite{van2023local}.

\begin{prop}\label{thm:cL_the_same}
\Cref{def:cL_intro} defines a unique $\Sig_d$-valued random variable. Furthermore, this random variable is the same as $\cL_{d,t,\chi}$ as defined in \cite[Theorem 6.1]{van2023local} when $t=1/p$.
\end{prop}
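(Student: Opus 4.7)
The idea is to combine the uniqueness guaranteed by \Cref{thm:observables_suffice} with the already-constructed random variable $\cL^*_{d,1/p,\chi}$ of \cite[Theorem~6.1]{van2023local}, which is \emph{a priori} $\Sig_d$-valued, and to identify the two by a moment computation. This simultaneously yields existence of the random variable of \Cref{def:cL_intro}, its uniqueness, and its $\Sig_d$-valuedness, as well as its agreement with the construction of \cite{van2023local}.

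The first task is to verify that the constants $C_\la := \frac{((p-1)\chi)^{|\la|}}{|\la|!} n_{\max}(G_{\la'})$ are nicely-behaved with respect to $p$. Applying \Cref{lemma:n_k:largek} to the conjugate $\la'$ (noting $(\la')' = \la \in \Y_d$) yields $n_{\max}(G_{\la'}) \le F_0(d,p)^{|\la|} p^{\sum_i \binom{\la_i}{2}}$. With $\la_2,\ldots,\la_d$ fixed, $\sum_i \binom{\la_i}{2} = \tfrac{1}{2}\la_1^2 + O_{\la_2,\ldots,\la_d}(\la_1)$, while Stirling gives $\log_p\bigl(((p-1)\chi)^{|\la|}/|\la|!\bigr) = -\la_1 \log_p \la_1 + O(\la_1)$. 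Combining, $C_\la \le F \cdot p^{\frac{1}{2}\la_1^2 - \xi(\la_1)}$ with $\xi(\la_1) = \la_1 \log_p \la_1 - O(\la_1)$ superlinear, so \Cref{thm:observables_suffice} guarantees at most one probability measure on $\bSig_d$ with these $\la$-moments.

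For existence, first restrict to $\chi \in [1/(p-1),p/(p-1)]$ and write $\chi = p^{-\zeta}/(p-1)$ with $\zeta \in [0,1]$. Take the additive-Haar model on $\Z_p$ (which is $\al$-balanced mod $p$ with $\al = 1-1/p$), let $k(n) = \lfloor n^{\eta}\rfloor$ for $\eta$ small enough that \Cref{cor:sur:uniform:asym} applies, and pass to a subsequence $n_j$ along which $\{-\log_p k(n_j)\} \to \zeta$. The identity
\begin{equation*}
\sum_{H \le G_{\la'}} n_k(H) \;=\; n_{k+1}(G_{\la'}),
\end{equation*}
obtained by prepending the final step $H \le G_{\la'}$ to a length-$k$ chain terminating at $H$, combined with \Cref{cor:sur:uniform:asym}, gives $\E[\#\Hom(\cok, G_{\la'})] = n_{k+1}(G_{\la'})(1+o(1))$. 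Dividing by $p^{|\la|(\log_p k + \zeta)} = k^{|\la|} p^{|\la|\zeta}$ and invoking \Cref{lemma:n_k:largek} produces the limit $n_{\max}(G_{\la'}) p^{-|\la|\zeta}/|\la|! = C_\la$. Meanwhile \cite[Theorem~10.1]{van2023local} asserts, along the same subsequence, that the recentered truncated cokernel profile converges in distribution to $\cL^*_{d,1/p,\chi}$. Combining the two yields $\E[p^{\la \cdot \cL^*_{d,1/p,\chi}}] = C_\la$ for every $\la \in \Y_d$, establishing existence; the uniqueness from the previous paragraph then forces the random variable of \Cref{def:cL_intro} to coincide with $\cL^*_{d,1/p,\chi}$, and in particular to be $\Sig_d$-valued. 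For $\chi$ outside the chosen period, the moment formula gives $\cL_{d,p^{-1},p\chi} \stackrel{d}{=} \cL_{d,p^{-1},\chi}+(1,\ldots,1)$ (by inspection of \eqref{eq:intro_cL_moments}, since multiplying $\chi$ by $p$ multiplies every moment by $p^{|\la|}$), so this extends the statement to all $\chi > 0$.

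The one delicate point is ensuring that the subsequence along which our Hom-moments converge (driven by $\{-\log_p k(n_j)\} \to \zeta$) is precisely the one used by \cite[Theorem~10.1]{van2023local}, so that both limits see the same $\chi = p^{-\zeta}/(p-1)$; this is bookkeeping but must be checked to close the argument. Should the appeal to \cite[Theorem~10.1]{van2023local} prove inconvenient, an alternative route is to compute the moments of $\cL^*$ directly from the explicit probability formulas of \cite[Theorem~6.1]{van2023local}, verifying the identity $\E[p^{\la \cdot \cL^*}] = \frac{((p-1)\chi)^{|\la|}}{|\la|!} n_{\max}(G_{\la'})$ combinatorially; this would be the main genuine obstacle, but it is avoided by leveraging the previously established convergence.
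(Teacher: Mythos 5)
Your proposal is correct and follows essentially the same route as the paper: establish the Hom-moment limit via \Cref{cor:sur:uniform:asym} together with \Cref{lemma:n_k:largek}, check the nicely-behaved condition, invoke the moment-method machinery of \Cref{sec:moment_method}, identify the limit with the random variable of \cite[Theorem 6.1]{van2023local} via \cite[Theorem 10.1]{van2023local}, and use the translation symmetry $\cL_{d,p^{-1},p\chi}=\cL_{d,p^{-1},\chi}-(1,\ldots,1)$ to cover all $\chi>0$.

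Two minor points worth noting. First, you compute the Hom-moment limit via $\#\Hom(G_n,G_{\la'})=\sum_{H\le G_{\la'}}\#\Sur(G_n,H)$ together with the identity $\sum_{H\le G_{\la'}}n_k(H)=n_{k+1}(G_{\la'})$; the paper instead uses the factorization \eqref{eq:hom_factor} into surjections and injections. Both are routine and yield the same limit, and your identity is arguably the slicker of the two. Second, and more substantively, the step ``combining the two yields $\E[p^{\la\cdot\cL^*_{d,1/p,\chi}}]=C_\la$'' is underjustified as stated: convergence in distribution together with convergence of the moments $\E[p^{\la\cdot X^{(n)}}]\to C_\la$ does not by itself imply that the distributional limit has moments $C_\la$ (mass can escape). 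To close this, one should argue exactly as in the paper: \Cref{thm:moment_convergence_general} (whose hypotheses you have already verified) produces a unique $\bSig_d$-valued $X$ with moments $C_\la$ and $X^{(n)}\to X$ weakly; then $X=\cL^*_{d,1/p,\chi}$ by uniqueness of distributional limits, which is what actually transfers the moment identity to $\cL^*$. You have all the ingredients, so this is a small expositional gap rather than a flaw in the strategy.
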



\begin{proof}[Proof of \Cref{thm:matrix_product_intro} and \Cref{thm:cL_the_same}]
We first show that there exists a unique $\bSig_d$-valued random variable $\cL_{d,p^{-1},\chi}$ solving the moment problem in \Cref{def:cL_intro} and that \Cref{thm:matrix_product_intro} holds for this random variable. Then we show the second part of \Cref{thm:cL_the_same}, which implies the first part, and thus shows that \Cref{def:cL_intro} makes sense and we have actually proven \Cref{thm:matrix_product_intro} holds as stated.

Letting $G_n$ be as in \Cref{thm:matrix_product_intro} and $c_n := \log_p k(n)$, we claim that
\begin{equation}\label{eq:limit_moments_no_zeta}
\lim_{n \to \infty} \frac{\E[\#\Hom(G_n,G_{\la'})]}{p^{|\la|c_n}} = \frac{n_{\max}(G_{\la'})}{|\la|!}.
\end{equation}
By factoring homomorphisms into a surjection and an injection, for any $\la,\nu \in \Sig_d^{\geq 0}$ we have
\begin{equation}\label{eq:hom_factor}
\#\Hom(G_{\nu'},G_{\la'}) = \sum_{\substack{\mu \in \Sig_d^{\geq 0}: \\ \mu_i \leq \la_i \text{ for all }i}} \frac{\#\Sur(G_{\nu'},G_{\mu'})\#\Inj(G_{\mu'},G_{\la'})}{\#\Aut(G_{\mu'})}.
\end{equation}
To show \eqref{eq:limit_moments_no_zeta} it suffices to show 
\begin{equation}\label{eq:limit_surj_no_zeta}
\lim_{n \to \infty} \frac{\E[\#\Sur(G_n,G_{\la'})]}{k(n)^{|\la|}} = \frac{n_{\max}(G_{\la'})}{|\la|!}
\end{equation}
(the denominators are the same in both, we have just rewritten it), since if we expand $\#\Hom(\cdot,\cdot)$ in \eqref{eq:limit_moments_no_zeta} using \eqref{eq:hom_factor}, we obtain a finite number of terms with coefficients depending only on the fixed signature $\la$, and if \eqref{eq:limit_surj_no_zeta} holds for all $\la$ then only the leading term in this sum survives.

As $k(n) \le e^{(\log n)^{1-\eps}}$, Corollary \ref{cor:sur:uniform:asym} is applicable and implies that 
\begin{equation}\label{eq:apply_sur_comp}
\Big |\E (\# \Sur(G_n),G_{\la'})) - n_{k(n)}(G_{\la'})\Big| \le e^{-n^{\al/(64p^{2|\la'|})}} n_{k(n)}(G_{\la'}).
\end{equation}
By \eqref{eq:limit_sbgp_seq} in \Cref{lemma:n_k:largek}, $n_{k(n)}(G_{\la'})/k(n)^{|\la|} = O(1)$ as $n \to \infty$, hence 
\begin{equation}\label{eq:compare_sur}
\frac{\E[\#\Sur(G_n,G_{\la'})]}{k(n)^{|\la|}} = (1+o(1))\frac{n_{k(n)}(G_{\la'})}{k(n)^{|\la|}}
\end{equation}
as $n \to \infty$ due to the exponential factor in \eqref{eq:apply_sur_comp}. Now, \eqref{eq:compare_sur} and \eqref{eq:limit_sbgp_seq} together imply \eqref{eq:limit_surj_no_zeta}, which shows \eqref{eq:limit_moments_no_zeta} as discussed above.

We now claim that the constants
\begin{equation}
C_\la := p^{-\zeta|\la|}\frac{n_{\max}(G_{\la'})}{|\la|!}
\end{equation} 
are nicely-behaved in the sense of \Cref{def:nicely_behaved}, where $\zeta$ is as in \Cref{thm:matrix_product_intro}. The upper bound in \Cref{lemma:n_k:largek} gives a bound 
\begin{equation}
p^{-\zeta|\la|}\frac{n_{\max}(G_{\la'})}{|\la|!} \leq F \frac{p^{\frac{\la_1^2}{2}+ \text{const} \cdot \la_1}}{|\la|!},
\end{equation}
and the $1/|\la|!$ provides the needed superlinear function in the exponent. Hence \Cref{thm:group_convergence_intro} applies after passing to the subsequence $(n_j)_{j \geq 1}$ of \Cref{thm:matrix_product_intro} so that $-c_{n_j}$ converges modulo $\Z$ as $j \to \infty$, with the $j$ in \Cref{thm:matrix_product_intro} playing the role of $n$ in \Cref{thm:group_convergence_intro}. 

Hence \Cref{thm:group_convergence_intro} applies, so there exists a unique $\bSig_d$-valued random variable with moments $C_\la$, and $(\rank(p^{i-1}G_{n_j})-\near{\log _{p}(k(n_{j}))+\zeta})_{1 \leq i \leq d}$ converges to it. When $\chi = p^{-\zeta}/(p-1)$, the moments $C_\la$ are exactly the ones in \Cref{def:cL_intro}. We refer to this limiting random variable as $\cL_{d,p^{-1},p^{-\zeta}/(p-1)}$, even though we have not yet shown that it is supported on $\Sig_d$ as claimed in \Cref{def:cL_intro}. We will do this now. The random variable in \cite[Theorem 6.1]{van2023local} is $\Sig_d$-valued by definition, so it suffices to show they are the same


To avoid confusion, we refer to the random variable called $\cL_{d,t,\chi}$ in \cite[Theorem 6.1]{van2023local} as $\tcL_{d,t,\chi}$. Taking the matrices in the setup of \Cref{thm:matrix_product_intro} to be distributed by the additive Haar measure on $\Z_p$, we have by \cite[Theorem 10.1]{van2023local}\footnote{Our $k(n)$ corresponds to $s_N$ in that result. Also, \cite[Theorem 10.1]{van2023local} is stated in terms of the singular numbers of the matrix, see \cite[(1.12)]{van2023local} for the equivalence with our notation.} that 
\begin{equation}\label{eq:cite_10.1_otherpaper}
(\rank(p^{i-1}G_{n_j})-\near{\log _{p}(k(n_{j}))+\zeta})_{1 \leq i \leq d} \to \tcL_{d,p^{-1},p^{-\zeta}/(p-1)}
\end{equation}
in distribution as $j \to \infty$. Though we wrote the initial argument of this proof for random matrices over $\Z$, the same one applies over $\Z_p$ because \Cref{cor:sur:uniform:asym} holds in either setting. Combining this fact with \eqref{eq:cite_10.1_otherpaper} yields that $\cL_{d,p^{-1},\chi} = \tcL_{d,p^{-1},\chi}$ for all $\chi \in \R_{>0}$, completing the proof. 
\end{proof}

\begin{rmk}
Although our proof works for some sequences $k(n) \to \infty$, the growth is rather slow, while in the additive Haar case of \cite[Theorem 10.1]{van2023local}, the result applies to any $k(n)$ satisfying the condition $k(n) \ll p^n$. This is mainly due to the limitation of the method in Section \ref{section:prod:moments}, and it is an interesting problem to extend the range to $p^{cn}$ for some $c<1$. 
\end{rmk}

\begin{rmk}
As mentioned in the Introduction, one may also make sense of \Cref{def:cL_intro} for a general real parameter $p > 1$ by extrapolating $n_{\max}$ as a function of $p$, and one may check this agrees with the definition in \cite{van2023local} for all such real $p$, not necessarily prime. However, for now we resist this diversion.
\end{rmk}

\subsection{Acknowledgements} We thank Mehtaab Sawhney for helpful remarks on his work and related ones, and thank the anonymous referees for many helpful corrections and comments. HN is supported by National Science Foundation grant DMS-1752345 and by a Simons Fellowship. RVP was supported by the European Research Council (ERC), Grant Agreement No. 101002013.


\end{document}